\numberwithin{equation}{section}
\DeclareMathOperator{\supp}{supp}
\DeclareMathOperator{\Dist}{Dist}
\DeclareMathOperator{\Leb}{Leb}
\newcommand{\real}{{\rm I\!R}} 
\newcommand{\natur}{{\rm I\!N}} 
\newcommand{\ine}{{\,\rm int\,}} 
\newcommand{\am}{\left |} 
\newcommand{\fm}{\right |} 
\newcommand{\ak}{\left\{} 
\newcommand{\fk}{\right\}} 
\newcommand{\ac}{\left [} 
\newcommand{\fc}{\right ]} 
\newtheorem{mtheorem}{Theorem}
\newtheorem{theorem}{Theorem}[section]
\newtheorem*{theorem*}{Theorem}
\newtheorem{lemma}[theorem]{Lemma}
\newtheorem{corollary}[theorem]{Corollary}
\newtheorem{proposition}[theorem]{Proposition}
\newtheorem{claim}{Claim}
\theoremstyle{definition}
\newtheorem{definition}[theorem]{Definition}
\newtheorem{example}{Example}
\newtheorem*{example*}{Example}
\newtheorem{remark}[theorem]{Remark}
\newtheorem*{remark*}{Remark}
\begin{document}

\title[Invariant multi-graphs]{Invariant multi-graphs in step skew-products}

\author[K. Gelfert]{Katrin Gelfert}
\address{Instituto de Matem\'atica, Universidade Federal do Rio de Janeiro, Cidade Universit\'aria - Ilha do Fund\~ao, Rio de Janeiro 21945-909,  Brazil}
\email{gelfert@im.ufrj.br}
\author[D. Oliveira]{Daniel Oliveira}\address{Instituto de Ci\^encias Exatas, Universidade Federal Rural do Rio de Janeiro, Rodovia BR-465, Km 7, Serop\'edica, RJ, Brazil}\email{danielreis@ufrrj.br}

\begin{abstract}
We study step skew-products over a finite-state shift (base) space whose fiber maps are  $C^1$ injective maps on the unit interval.  We show that certain invariant sets have a multi-graph structure and can be written graphs of one, two or more functions defined on the base. In particular, this applies to any hyperbolic set and to the support of any  ergodic hyperbolic measure.
Moreover, within the class of step skew-products whose interval maps are ``absorbing'', open and densely the phase space decomposes into attracting and  repelling double-strips such that their attractors and repellers are graphs of one single-valued or bi-valued continuous function almost everywhere, respectively.
\end{abstract}

\begin{thanks}{This research has been supported [in part] by CNPq- and FAPERJ-grants (Brazil).}\end{thanks}

\keywords{partial hyperbolicity, hyperbolic ergodic measures, skew-product, invariant graph, Lyapunov exponents}
\subjclass[2000]{%
37E05, 
37D30, 
37D25, 
37C29
}
\maketitle

\section{Introduction}

We study the following class of maps.

\begin{definition}\label{defdeF}
Given $N \in \natur$, denote by $\Sigma_N = \ak 1,\ldots,N \fk^{\mathbb{Z}}$ the space of bilateral sequences of $N$ symbols and $I = [0,1]$ the unit interval. We consider a \emph{step skew-product} defined by
\begin{equation}\label{eq:stepskewproood}
    F\colon \Sigma_N \times I \to \Sigma_N \times I \quad \colon \quad (\xi,p) \mapsto (\sigma(\xi), f_{\xi_0}(p)),
\end{equation}
where $\sigma\colon \Sigma_N \to \Sigma_N$ is the usual shift map and  $f_i\colon I \to f_i(I) \subset I$, $i=1,\ldots,N$.
We  always assume that $f_i$ are $C^1$-diffeomorphisms onto its image. We will call $\Sigma_N$ the \emph{base} and $I$ (and sometimes also the sets $I_\xi:=\{\xi\}\times I$, $\xi\in\Sigma_N$) the \emph{fiber} and refer to $f_i$ as \emph{fiber maps}.
\end{definition}

This class of maps recently has drawn certain attention as they provide important examples of partially hyperbolic dynamical systems (see, for example, \cite{DG,DHRS}). More precisely, those partially hyperbolic systems have a quite specific structure because the central direction (associated to the fiber direction) is one-dimensional and integrable. Nevertheless, they still include maps with a very complex dynamics as the invariant central direction can mix hyperbolicity of  different types (can have negative or positive fiberwise Lyapunov exponents) or can be non-hyperbolic and as such can provide important toy models of genuinely non-hyperbolic systems.
Step skew-products have the advantage that they admit a very simple description and their analysis is technically relatively easy. Because of that, they provide a good structure to investigate non-hyperbolicity and hopefully gradually study more difficult problems (for example, to  pass from step skew-products to general skew-products and eventually to certain partially hyperbolic maps).
The analysis of step skew-products is also intimately related to the one of iterated function systems, which have an intrinsic interest.

It is well known that for skew-products with monotone fiber maps there is a close relation between (the existence of) invariant graphs and ergodic measures of $F$ (see, for example,  \cite[Section 1.8]{Ar} for results on general random dynamical systems which can be one point of view to analyse \eqref{eq:stepskewproood}).
There exist also already some specific results on the regularity of invariant graphs  (see, for example, \cite{HNW,St,St1}), however in general assuming that the fiber dynamics is hyperbolic.

Further results  that give a good description of skews products from an ergodic point of view are due to Ruelle and Wilkinson \cite{RW} who consider a general measurable skew-product with an invertible and ergodic map in the base (which is a general probability space) and $C^{1+\alpha}$ diffeomorphisms on the fiber (which is a general Riemannian compact manifold). They prove that any ergodic measure for the skew-product which projects to the ergodic measure in the base and which has only negative fiberwise Lyapunov exponents admits an atomic fiber disintegration.
Results about atomicity of the measures in the disintegration were also obtained in other contexts (see \cite{HP}, for example).

We will provide a detailed description, both from a topological and from an ergodic point of view, of the step skew-product \eqref{eq:stepskewproood}. More precisely, we will establish the existence of attractors and repellers and study further  relevant sets such as invariant (multi-/bi-)graphs.
Our results essentially split into two parts. First, we study (multi-/bi-)graphs and some mild hyperbolicity assumption (Theorems \ref{prop:conjhip} and \ref{prop:medhip} below). Second, fixing  some Markov measure on the base, we investigate a generic class of step skew-products \eqref{eq:stepskewproood} assuming also that $I$ is absorbing and deduce a complete description of the topological and ergodic structures.
One of our starting points for the latter is the work from Klepstyn and Volk \cite{KV}, where they restrict themselves to orientation preserving fiber maps only.
One novelty of this paper is on one hand precisely that we do not \emph{a priori} assume uniform hyperbolicity of the fiber dynamics. Moreover, comparing with \cite{RW}, the fact that the fibers are one-dimensional, allows us to require $C^1$ regularity only and enables us to improve the conclusion about the atomic disintegration and to show the existence of a (bi-)graph structure. On the other hand that we do not assume that fiber maps are orientation preserving and hence, in general, do not have invariant ``simple"  graphs.  The hypothesis that every fiber map preserves orientation appears quite often in the literature (see, for example, \cite{GH,I,KV}), but restricts a lot the class of covered examples.

Let us provide some more details on  \cite{KV} and discuss further results. In fact, the authors study  \eqref{eq:stepskewproood} assuming additionally that all maps preserve orientation and that $I$ is absorbing and show the existence of a finite collection of trapping and repelling strips whose union is the entire phase space such that every trapping (repelling) strip has a unique maximal attractor (repeller). These attractors and repellers are \emph{bony graphs} (following the notation from \cite{KV}, see also Definition \ref{def:bigraficoossudo}), that is, each of them intersects almost every fiber (with respect to an \emph{a priori} fixed base Markov measure) in an unique point and the other fibers in an interval. In a certain way, these sets are similar to the porcupine-like horseshoes studied in \cite{DG}, a paradigmatic example of partially hyperbolic dynamical system (although one of the characteristic features of this example is the presence of an orientation reversing map giving rise to a cycle).
Building also on  \cite{KV}, the particular case where all fiber maps fix the extremes of $I$ (and hence trivially also preserve orientation) is studied in \cite{GH} with certain additional assumptions on the fiberwise Lyapunov exponents on the extremes. The authors obtain a similar  decomposition of the phase space as in \cite{KV}. However, in the case of the border strips (following the notation from \cite{GH}), the maximal attractor is a \emph{thick graph} (see also \cite{I}), that is, it is a graph whose closure has full measure (where the measure considered is the product of the Markov measure on the base with the Lebesgue measure on the fiber).

Let us now state more precisely our results.
Consider the \emph{maximal invariant  set of $F$} defined by
\begin{equation}\label{defdeLambda}
    \Lambda := \bigcap_{n\geq 0} F^n (\Sigma_N \times I).
\end{equation}
 From now on we will always consider $F|_\Lambda$, but for simplicity we will only write $F$.
 Note that $F^{-1}$ is well defined on $\Lambda$.
We will denote by $\Pi_1\colon\Sigma_N\times I\to\Sigma_N$ and $\Pi_2\colon\Sigma_N\times I\to I$ the canonical projections onto the first and second coordinates, respectively.
Given $H \subset \Lambda$, let
\begin{equation}\label{eq:projfib}
    H_\xi := \Pi_2(H \cap (\{\xi\}\times I))\subset I.
\end{equation}
We will denote $f_{\xi_{k}\ldots\xi_{m}} := f_{\xi_{m}} \circ \ldots \circ f_{\xi_{k}}$ for each $k,m \in \mathbb{N}, k \leq m$.

Let us recall the concept of hyperbolicity in a step skew-product.

\begin{definition}\label{def:hip}
A set $H\subset\Lambda$ is \emph{hyperbolic with fiber contraction} (with respect to $F$) if there exist $c > 0$ and $0 < \lambda < 1$ such that
\begin{equation}\label{eq:hipcont}
    \am (f_{\xi_0  \ldots  \xi_{n-1}})'(p) \fm \leq c \lambda^n \quad\text{for all}\,\, n \geq 1 \,\,\text{and for all}\,\, (\xi,p) \in H.
\end{equation}
A set $H$ is \emph{hyperbolic with fiber expansion} (with respect to $F$) if it is hyperbolic with fiber contraction (with respect to $F^{-1}$).
We say that $H$ is \emph{hyperbolic} if it is hyperbolic with either fiber contraction or fiber expansion.
\end{definition}

The following is the first main result.

\begin{mtheorem}\label{prop:conjhip}
Let $F$ be as Definition \ref{defdeF} and $\Lambda$ its maximal invariant set.
Then for every $F$-invariant hyperbolic set $H \subset \Lambda$ there exists $M \geq 1$ such that $\# H_\xi \leq M$ for all $\xi \in \Sigma_N$.
\end{mtheorem}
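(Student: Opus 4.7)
My plan is to combine a uniform local contraction estimate in the fiber with a monotonicity-plus-pigeonhole argument. As a preliminary reduction, I may assume $H$ is closed: continuity of $F$ gives that $\bar H$ is still $F$-invariant, and joint continuity of $(\xi,p)\mapsto (f_{\xi_0\ldots\xi_{n-1}})'(p)$ extends the hyperbolicity estimate from Definition~\ref{def:hip} to $\bar H$. It also suffices to treat the case of fiber contraction, since the fiber-expansion case follows by applying the same argument to $F^{-1}$.

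The central technical step is to produce a uniform $\delta>0$ and an integer $n_0\geq 1$ with the following property: for every $(\xi,p)\in H$, the composition $g_\xi:=f_{\xi_0\ldots\xi_{n_0-1}}$ satisfies $|g_\xi'(r)|<1/2$ whenever $r\in I$ with $|r-p|<\delta$. Fix $n_0$ so large that $c\lambda^{n_0}<1/4$. The key observation is that there are only $N^{n_0}$ possible $n_0$-step compositions, each $C^1$ on the compact interval $I$; hence their derivatives share a common modulus of continuity $\omega$. Choosing $\delta>0$ with $\omega(\delta)<1/4$ works: the hyperbolicity hypothesis gives $|g_\xi'(p)|\leq c\lambda^{n_0}<1/4$, and uniform continuity propagates the bound to $B(p,\delta)\subset I$. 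Iterating this contraction in $n_0$-step blocks along the backward orbit in $H$ yields the crucial estimate: setting $P_k:=f_{\xi_{-kn_0}\ldots\xi_{-1}}^{-1}(p)$ and likewise $Q_k$ for a second fiber point $q$, whenever $|P_k-Q_k|<\delta$ one has $|p-q|\leq 2^{-k}|P_k-Q_k|$.

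To bound $\#H_\xi$, I exploit that each $f_i$ is a diffeomorphism onto its image, hence strictly monotone, so every backward composition $f_{\xi_{-kn_0}\ldots\xi_{-1}}^{-1}$ preserves or reverses the fiber ordering. Suppose $\#H_\xi=M$ with $p_0<p_1<\cdots<p_{M-1}$. For each $k\geq 1$ the backward iterates $P_k^0,\ldots,P_k^{M-1}$ lie in $I$ in a monotone (possibly reversed) order, so their consecutive gaps $|P_k^{i+1}-P_k^i|$ sum to at most $1$; hence some index $i_k\in\{0,\ldots,M-2\}$ satisfies $|P_k^{i_k+1}-P_k^{i_k}|\leq 1/(M-1)$. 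If $M-1>1/\delta$, this minimum gap is strictly below $\delta$ for every $k$. Since the minimizing index ranges over the finite set $\{0,1,\ldots,M-2\}$, pigeonhole produces a single $i_0$ realizing the minimum for infinitely many $k$. Applying the contraction estimate from the previous paragraph to the pair $(p_{i_0},p_{i_0+1})$ at those $k$ forces $|p_{i_0+1}-p_{i_0}|\leq 2^{-k}/(M-1)$, which tends to $0$, contradicting $p_{i_0}\neq p_{i_0+1}$. Therefore $M\leq 1+\lceil 1/\delta\rceil$, and this uniform bound is the desired $M$. The main obstacle is setting up the uniform $\delta$ of step two in mere $C^1$ regularity; the one-dimensionality of the fiber together with the finiteness of $n_0$-step compositions over the alphabet $\{1,\ldots,N\}$ is what makes this step work without any distortion control.
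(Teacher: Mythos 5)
Your proof is correct, and it uses the same core mechanism as the paper — a uniform block length $n_0$ and a uniform fiber radius $\delta$ around each point of $H$ inside which an $n_0$-step composition contracts by a fixed factor, followed by pigeonhole and compactness of the fiber — but the bookkeeping is genuinely different and simpler. You pigeonhole directly on the index of the smallest consecutive gap among the backward iterates of the $M$ points: since the gaps sum to at most $1$, some gap drops below $\delta$ at every time $k$ once $M>1+1/\delta$, one index repeats for infinitely many $k$, and then the crucial observation that a gap below $\delta$ stays below $\delta$ under every forward $n_0$-block yields $|p_{i_0+1}-p_{i_0}|\leq 2^{-k}/(M-1)\to 0$, a contradiction. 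The paper instead splits into two cases depending on whether every $\varepsilon$-subinterval between the extreme backward iterates meets $H_{\sigma^{-n_0}(\xi)}$; it tracks persistent ``gaps'' (maximal $\varepsilon$-free intervals, which it shows stay at least $\varepsilon$-long under backward iteration), bounds the number of consecutive expansion steps via the total fiber length, and bounds the number of gap-creation steps via the same length. Your route avoids this gap-tracking entirely and is shorter; the paper's route makes the fiberwise geometry (holes in $H_\xi$) explicit. Two small remarks: the preliminary passage to $\bar H$ is harmless but unnecessary, since the uniform $\delta$ comes from compactness of $I$ and the finiteness of $n_0$-step compositions, not from closedness of $H$, and \eqref{eq:hipcont} is already a pointwise hypothesis on $H$; and ``$B(p,\delta)\subset I$'' should read $B(p,\delta)\cap I$, which changes nothing since the Mean Value Theorem is applied on subintervals of $I$.
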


The second main result concerns the more general case of a set $H$ which supports a hyperbolic measure. Given an ergodic $F$-invariant Borel probability measure $\mu$, define the \emph{(fiber) Lyapunov exponent} of $\mu$ by
\begin{equation}\label{eq:expdemu}
	\chi(\mu)
	:= \int\log\,\lvert(f_{\xi_0})'(p)\rvert\,d\mu(\xi,p).
\end{equation}
We will call $\mu$ \emph{hyperbolic} if $\chi(\mu)\ne0$.
By ergodicity, for $\mu$-almost every point $(\xi,p)$ we have $\chi(\mu)=\chi(\xi,p)$, where $\chi(\xi,p)$ denotes the \emph{(forward) Lyapunov exponent} of $(\xi,p)$ (with respect to $F$), see Section~\ref{sec:chmh}.

\begin{mtheorem}\label{prop:medhip}
	Let $F$ be as Definition \ref{defdeF} and $\Lambda$ its maximal invariant set.  Let $\mu$ be a hyperbolic ergodic $F$-invariant probability measure and $H\subset \Lambda$ an $F$-invariant set such that $\mu(H)=1$ and $\chi(\xi,p)=\chi(\mu)$ for every $(\xi,p)\in H$. Then there exists $M \geq 1$ such that $\# H_\xi \leq M$ for every $\xi \in \Pi_1(H)$. Moreover, there exist functions $\varphi^-, \varphi^+ \colon\Pi_1(H) \to I$ such that
\[
    \Gamma (\mu) :=  \Gamma^- \cup \Gamma^+,\quad
    \text{ where }\quad
    \Gamma^\pm := \ak (\xi,\varphi^\pm(\xi)) \colon \xi \in \Pi_1(H) \fk
\]
is an $F$-invariant set which coincides with the support of $\mu$ in $\mu$-almost every point.
\end{mtheorem}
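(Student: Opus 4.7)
Without loss of generality assume $\chi(\mu)<0$; the case $\chi(\mu)>0$ is symmetric by applying the argument to $F^{-1}$. The plan is to exhaust $H$ by Pesin-type uniformly hyperbolic pieces, apply Theorem~\ref{prop:conjhip} to their $F$-invariant cores, and then pass to extremal selections using monotonicity of the fiber maps.

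Set $\lambda:=e^{\chi(\mu)/2}\in(0,1)$ and, for each $k\ge 1$, put
\[
H_k:=\bigl\{(\xi,p)\in H\colon\lvert(f_{\xi_0\ldots\xi_{n-1}})'(p)\rvert\le k\lambda^n\ \text{ for every }n\ge 1\bigr\}.
\]
Since $\chi(\xi,p)=\chi(\mu)$ for \emph{every} $(\xi,p)\in H$, the family $(H_k)_{k\ge1}$ is increasing and covers $H$, and each $H_k$ satisfies \eqref{eq:hipcont} with $(c,\lambda)=(k,\lambda)$. The $H_k$ are not $F$-invariant, but the cores
\[
\Omega_k:=\bigl\{(\xi,p)\in H\colon F^n(\xi,p)\in H_k\ \text{ for every }n\in\mathbb{Z}\bigr\}
\]
are $F$-invariant and still inherit \eqref{eq:hipcont} with the same constants, so Theorem~\ref{prop:conjhip} applies and yields a uniform bound $\#(\Omega_k)_\xi\le M_k$ on every fiber. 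A Birkhoff recurrence argument applied to $\log\lvert f_{\xi_0}'\rvert$ along backward and forward orbits, together with the constancy of $\chi$ on $H$, then shows $\mu\bigl(\bigcup_k\Omega_k\bigr)=1$.

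The map $\xi\mapsto\#H_\xi$ is $\sigma$-invariant, since $f_{\xi_0}$ is a bijection from $H_\xi$ onto $H_{\sigma\xi}$; by ergodicity of $\nu:=(\Pi_1)_*\mu$ together with the previous step, it is therefore $\nu$-a.e.\ equal to some finite constant $M$. The upgrade of this bound from a $\nu$-full-measure set of fibers to \emph{every} $\xi\in\Pi_1(H)$ is carried out by a non-uniform Pesin argument: for each $(\xi_0,p_0)\in H$ the pointwise fiber Lyapunov exponent $\chi(\xi_0,p_0)=\chi(\mu)<0$ produces a local stable interval around $p_0$ in $I$ which can meet $H_{\xi_0}$ in only finitely many points; compactness of $I$ together with the $\sigma$-invariance of $\#H_\xi$ then globalizes this local finiteness to the uniform bound $\#H_\xi\le M$ on $\Pi_1(H)$. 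Having this, define
\[
\varphi^-(\xi):=\min H_\xi,\qquad \varphi^+(\xi):=\max H_\xi,
\]
both well-defined on $\Pi_1(H)$. Because each $f_i$ is a monotone $C^1$-diffeomorphism, $f_{\xi_0}$ maps the extremes of $H_\xi$ onto the extremes of $H_{\sigma\xi}$, swapping $\varphi^-$ and $\varphi^+$ exactly when $f_{\xi_0}$ reverses orientation; consequently $\Gamma(\mu)=\Gamma^-\cup\Gamma^+$ is $F$-invariant.

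To identify $\Gamma(\mu)$ with $\supp\mu$ up to a $\mu$-null set, exploit the purely atomic character of the fiber disintegration of $\mu$, a one-dimensional $C^1$ sharpening of Ruelle--Wilkinson \cite{RW} discussed in the introduction. Ergodicity forces the number of atoms to be $\nu$-a.e.\ constant, and the combination of the monotonicity of the $f_i$ with the uniform hyperbolic contraction on each Pesin core $\Omega_k$ then forces the $\mu$-carrying atoms to be precisely the fiberwise extrema $\varphi^\pm$; thus $\mu(\Gamma(\mu))=1$. The hardest part of the argument is twofold: on the one hand the passage from the $\nu$-a.e.\ bound to a bound valid on every fiber of $\Pi_1(H)$, which relies essentially on a non-uniform Pesin stable-manifold construction on one-dimensional fibers; on the other hand the reduction from an \emph{a priori} multi-graph structure (Theorem~\ref{prop:conjhip} only bounds $\#H_\xi$ without selecting two distinguished sections) to a bi-graph that captures all the $\mu$-mass, a step that uses both the one-dimensionality of the fiber and the rigidity imposed by the constant Lyapunov exponent on $H$.
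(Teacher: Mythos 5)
The proposal takes a fundamentally different route from the paper, and it has a fatal gap at its first load-bearing step.

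Your plan is to exhaust $H$ by Pesin-type blocks $H_k$, pass to their $F$-invariant cores
\[
\Omega_k=\{(\xi,p)\in H : F^n(\xi,p)\in H_k\ \text{for all }n\in\mathbb{Z}\},
\]
apply Theorem~\ref{prop:conjhip} to each $\Omega_k$, and then argue that $\mu\bigl(\bigcup_k\Omega_k\bigr)=1$ via Birkhoff. This last assertion is not justified and is in general false. Define $c_0(\xi,p):=\sup_{n\ge1}\lambda^{-n}\lvert(f_{\xi_0\ldots\xi_{n-1}})'(p)\rvert$, so that $H_k=\{c_0\le k\}$. Oseledets/Pliss guarantee that $c_0$ is finite on $H$ and grows subexponentially along orbits, but nothing forces $c_0$ to be \emph{essentially bounded}. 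If $\mu(\{c_0>k\})>0$ for every $k$ (a situation that does occur), then Poincar\'e recurrence forces $\mu$-almost every orbit to visit $\{c_0>k\}$ for every $k$, hence $\mu(\Omega_k)=0$ for every $k$ and $\mu\bigl(\bigcup_k\Omega_k\bigr)=0$. So the intended reduction to Theorem~\ref{prop:conjhip} collapses. The paper avoids this entirely: it never passes to uniformly hyperbolic invariant cores, but instead uses Pliss hyperbolic times (which hold for \emph{every} point of $H$, thanks to the hypothesis that $\chi(\xi,p)=\chi(\mu)$ on all of $H$, not just a.e.) together with the bounded-distortion Lemma~\ref{lem:contraminv}, and derives the uniform bound on $\#H_\xi$ by a pigeonhole argument showing that too many points in one fiber would have to share a common hyperbolic time and would then be squeezed together, a contradiction.

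Two further points. Your ``non-uniform Pesin stable-manifold'' upgrade from $\nu$-a.e.\ fibers to all fibers of $\Pi_1(H)$ is hand-waved; and it is in fact unnecessary in the paper's scheme because the hyperbolic-times argument applies pointwise on $H$ (again, since $\chi(\xi,p)=\chi(\mu)$ is assumed \emph{everywhere} on $H$), so a.e.-type reasoning never enters the finiteness proof. Also, invoking an ``atomic fiber disintegration'' à la Ruelle--Wilkinson to identify $\Gamma(\mu)$ with $\supp\mu$ is circular: the introduction explains that atomic disintegration in this $C^1$ setting is a \emph{consequence} of the present theorem, not an available input. Finally, your extremal selection $\varphi^\pm=\min/\max H_\xi$ need not carry full $\mu$-mass: the set $A\cup B$ of fiberwise extremes is $F$-invariant, so by ergodicity its $\mu$-measure is $0$ or $1$, and when it is $0$ one must peel off these extremes and repeat; the paper performs exactly this finite iteration (terminating because $\#H_\xi\le M$), whereas your write-up asserts the first peel already works.
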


Finally, we will study a certain subclass of step skew-products and obtain a more detailed information about its graph-structure (topologically and ergodically). Let us first provide some definitions.

\begin{definition}\label{def:multifuncao}
A \emph{multi-function} $\psi\colon D \subset \Sigma_N \to I$ is a relation that associates to every point $\xi \in D$ a nonempty subset $\psi(\xi) \subset I$. A multi-function $\psi\colon D \to I$ is \emph{compact-valued} when $\psi(D)$ is  compact for every $\xi \in D$ and it is \emph{uniformly finite} if there exists $M \geq 1$ such that $\# \psi(\xi) \leq M$ for all $\xi \in D$. If $\#\psi(\xi)=2$ for all $\xi \in D$ then $\psi$ is a \emph{bi-function}.

Given a multi-function $\psi$, we call the set $\ak (\xi,\psi(\xi)) \colon \xi \in D \fk$ a \emph{multi-graph} in $\Sigma_N \times I$. Analogously for bi-function and \emph{bi-graph}.
\end{definition}
Following~\cite{KV} we introduce the following (slightly extended) concept.

\begin{definition}[Continuous (Bi-)bony graph]\label{def:bigraficoossudo}
Consider a Borel probability measure $\lambda$ on $\Sigma_N$. A set $B \subset \Sigma \times I$ is a \emph{bony graph} (relative to $\lambda$) if  $B_\xi$ contains a single point for $\lambda$-almost every $\xi$ and is an interval in all remaining points.
A bony graph is \emph{continuous} if for all $\xi \in \Pi_1(B)$ and for all $\varepsilon > 0$ there exists $\delta > 0$ such that if
$\eta \in \Pi_1(B)$ and $d(\eta,\xi) < \delta$ then $B_\eta \subset U_\varepsilon(B_\xi)$, where $U_\varepsilon(B_\xi)$ denotes the $\varepsilon$-neighborhood of $B_\xi$.
A set $B\subset\Sigma\times I$ is a \emph{(continuous) bi-bony graph} (relative to $\lambda$) if it is a union of two (continuous) bony graphs (relative to $\lambda$).
\end{definition}

The interval $I$ is \emph{absorbing} with respect to a fiber map $f_i$ if $f_i(I)\subset\ine(I)$.

The following is our third main result. We postpone the definition of \emph{attracting} and \emph{repelling bi-strips} to Section~\ref{subsec:far} and the details on the topology in the space of step skew-products $\mathcal{S}(N)$ to Section \ref{subsec:cg}.

\begin{mtheorem}\label{teo:TP}
	Let $F$ be as Definition \ref{defdeF}. Assume that  $I$ is absorbing with respect to each fiber maps $f_i$, $i=1,\ldots,N$.
	Then there exists an open and dense subset of $\mathcal S'\subset\mathcal{S}(N)$ such that for every $F\in\mathcal S'$ there exists a finite collection of attracting and repelling bi-strips which satisfy the following property:
\begin{enumerate}
    \item[(1)] Their union is the phase space $\Sigma_N\times I$.
    \item[(2)] Each attracting bi-strip has a unique maximal attractor and each repelling bi-strip has a unique maximal repeller.
\end{enumerate}
	Moreover, if $\lambda_0$ is a (nondegenerate) Markov measure on $\Sigma_N$ then:
\begin{enumerate}
    \item[(3)] The maximal attractor (repeller) $A_i \subset S_i$ in each attracting (repelling) bi-strip is a continuous bi-bony graph.
    \item[(4)] Each attracting (repelling) bi-strip $S_i$ supports a unique $F$-invariant and ergodic measure $\mu_i$ which projects to $\lambda_0$ and whose fiberwise disintegration is atomic in $\lambda_0$-almost every fiber.
    Moreover, this measure is physical and hyperbolic and its basin contains a full measure set relative to the measure $(\lambda_0\times m)|_{S_i}$.
    Further, there exists a subset $\Upsilon\subset A_i$ such that $\mu_i(\Upsilon)=1$ and $\Pi_1 \colon \Upsilon \to \Pi_1(\Upsilon)$ is a semi-conjugation two-to-one.
\end{enumerate}
\end{mtheorem}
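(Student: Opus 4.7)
The plan is to adapt the Klepstyn--Volk strategy \cite{KV} to the present setting in which fiber maps may reverse orientation. The essential new feature is that the upper and lower invariant graphs bounding an attractor or repeller are exchanged by $F$ whenever $f_{\xi_0}$ reverses orientation, so one cannot expect a single invariant graph but rather a pair of them --- a bi-graph. Concretely, for each $\xi$ I would let $\psi^-(\xi):=\min\Lambda_\xi$ and $\psi^+(\xi):=\max\Lambda_\xi$, where $\Lambda_\xi=\Pi_2(\Lambda\cap I_\xi)$. Since $I$ is absorbing and the $f_i$ are $C^1$ diffeomorphisms onto their images, $\Lambda_\xi$ is a nonempty compact subset of $\ine(I)$, and $\psi^-$ is lower-semicontinuous while $\psi^+$ is upper-semicontinuous. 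The bi-graph
\begin{equation*}
\Gamma=\{(\xi,\psi^-(\xi))\colon\xi\in\Sigma_N\}\cup\{(\xi,\psi^+(\xi))\colon\xi\in\Sigma_N\}
\end{equation*}
is $F$-invariant, with the $\pm$ labels swapped precisely at orientation-reversing symbols.

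To prove (1)--(2) I would decompose $\Sigma_N\times I$ using a finite collection of such bi-graphs, obtained iteratively by stripping off maximal attracting/repelling pieces. The open and dense subset $\mathcal{S}'\subset\mathcal{S}(N)$ is characterized by a Kupka--Smale-type condition: at every base-periodic orbit, the corresponding fiber composition should be strictly hyperbolic (no zero Lyapunov exponent) and distinct bi-graphs should not coalesce along such orbits. Openness follows from $C^1$-continuity of derivatives; density follows from local perturbations of finitely many $f_i$'s. Each resulting bi-strip is then either forward-contracting (attracting) or backward-contracting (repelling), and its maximal attractor (resp.\ repeller) is the intersection of its forward (resp.\ backward) iterates.

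For (3)--(4), fix a nondegenerate Markov measure $\lambda_0$ on $\Sigma_N$. In each attracting bi-strip $S_i$ the fiberwise contraction coming from the absorbing hypothesis allows one to construct an $F$-invariant probability measure $\mu_i$ on $S_i$ projecting to $\lambda_0$ as a weak-$*$ limit of push-forwards of $\lambda_0\times\delta_p$ for a generic $p$; uniqueness follows because any two such measures must coincide after forward iteration (their fibers both collapse onto $A_i$). Hyperbolicity of $\mu_i$ is immediate from the contraction, and physicality with $(\lambda_0\times m)$-full basin in $S_i$ follows from standard graph-transform and Rokhlin disintegration arguments. Applying Theorem~\ref{prop:medhip} to $\mu_i$ then produces two functions $\varphi_i^\pm$ whose graphs support $\mu_i$, so that $\mu_i$-almost every fiber meets $A_i$ in at most two points; combined with the fiber-collapse, this shows that $A_i$ is a continuous bi-bony graph relative to $\lambda_0$. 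The two-to-one semi-conjugacy $\Pi_1\colon\Upsilon\to\Pi_1(\Upsilon)$ is obtained by restricting to the full $\mu_i$-measure set where $\varphi_i^-\neq\varphi_i^+$.

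The hard part will be the open-and-dense step. Without orientation preservation, perturbation arguments must track the orientation cocycle $\epsilon\colon\Sigma_N\to\{\pm1\}$ alongside the fiber dynamics; in particular one must rule out that a perturbation causes the two branches of a bi-graph to coalesce on a periodic orbit in a way that destroys the bi-strip structure or merges adjacent bi-strips. This requires a combinatorial analysis on periodic words together with transversality-type genericity arguments, and constitutes the bulk of the technical work.
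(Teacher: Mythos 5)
Your plan is a genuinely different route from the paper's. The paper never works with $F$ directly at the level of the KV decomposition: instead it builds an \emph{extended} step skew-product $G$ over a $2N$-symbol subshift $\Sigma_A$ in which every fiber map is orientation-preserving (by composing each orientation-reversing $f_i$ with the flip $R(x)=1-x$ on one side, keeping track of the current orientation via the extra symbol), together with a two-to-one semi-conjugacy $\Pi\colon\Sigma_A\times I\to\Sigma_N\times I$. Items (1)--(4) for $F$ are then obtained by applying \cite[Theorem~2.15]{KV} verbatim to $G$ with the symmetric Markov extension of $\lambda_0$, and pushing the strips, bony graphs, and measures down through $\Pi$ (Lemmas~\ref{projdasfaixas}, \ref{lem:relentreatratores}, \ref{lem:duplobg}, \ref{lem:af3teo}, \ref{lem:medfisica}, \ref{unicidadeaf3}). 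The genericity conditions i)--iii) in Section~\ref{subsec:cg} are phrased so that $F$ generic implies $G$ satisfies the KV hypotheses, and the only new perturbation argument is Proposition~\ref{prop:abeden}. This is the key idea your proposal is missing: the orientation cocycle you mention at the end is exactly what the doubling $\Sigma_A$ encodes, and encoding it into the base lets one cite \cite{KV} as a black box instead of reproving its decomposition, bony-graph, and uniqueness theorems from scratch in the harder orientation-reversing setting.

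Beyond that structural point, there are two concrete gaps in the argument as you sketch it. First, the uniqueness of $\mu_i$: the statement ``any two such measures must coincide after forward iteration (their fibers both collapse onto $A_i$)'' does not work once $A_i$ has two points in a typical fiber; supports contained in $A_i$ do not by themselves force equality, and in the paper this is exactly the content of Lemma~\ref{unicidadeaf3}, which needs the mirrored-measure machinery of Section~\ref{subsec:sm} and the KV uniqueness upstairs. Second, for item (3) you invoke Theorem~\ref{prop:medhip} applied to $\mu_i$, but that theorem only controls the support of $\mu_i$ and gives a bi-graph $\mu_i$-almost everywhere; it says nothing about the \emph{topological} attractor $A_i$ on the $\lambda_0$-null set of fibers where $A_i\cap I_\xi$ may be a nontrivial interval, which is precisely what ``bony'' means. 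Establishing that the bony fibers form a $\lambda_0$-null set is a statement about $A_i$, not about $\supp\mu_i$, and is again inherited in the paper from the KV bony-graph theorem via Lemma~\ref{lem:duplobg}. Finally, as you acknowledge, the open-and-dense step is left as a sketch; in the paper it reduces to a finite list of transversality conditions on short periodic words (conditions i)--iii)), whose density is proved by an explicit perturbation and Implicit Function Theorem computation in Proposition~\ref{prop:abeden}.
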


As already mentioned above, Theorem \ref{teo:TP} generalizes~\cite[Theorem 2.15]{KV} in the sense that we remove the hypothesis that every fiber map preserves orientation. Note that in the case that all maps do preserve orientation, then each bi-strip and bi-bony graph is simply a strip and a bony graph, respectively, and the semi-conjugation claimed in item 4 is a conjugation. In the more general case, new phenomena occur which are precisely the focus of this paper.

Section \ref{sec:chmh} studies general multi-graphs and contains the proofs of Theorems \ref{prop:conjhip} and \ref{prop:medhip}.
Section \ref{sec:dat} investigates the properties of generic step skew-products claimed in Theorem \ref{teo:TP}. Our main approach is to introduce an extended step skew-product (Section \ref{subsec:ESSP}) with orientation preserving fiber maps and studies correspondingly defined strips (Section \ref{subsec:far}), extended Markov measures (Section \ref{subsec:me}) and stationary measures (Section \ref{subsec:sm}). The proof of Theorem \ref{teo:TP} is  concluded in Section \ref{subsec:demTP}.

\section{Multi-graphs in step skew-products}\label{sec:chmh}

In this section we will prove Theorems \ref{prop:conjhip} and \ref{prop:medhip}. We will also see some examples for these two results.

\subsection{Proof of Theorem \ref{prop:conjhip}}

We will  assume that $H$ is hyperbolic with fiber contraction, the other case is analogous and will be omitted.

To sketch the proof, we will show that there are uniform neighborhoods (in the fibers) of each point in $H$ being uniformly contracted by some iterate of $F$ and hence uniformly expanded by the inverse  $F^{-1}$. By compactness of the fiber,   simultaneously there can  exist only a bounded number of them.

Let $c$ and $\lambda$ be as in \eqref{eq:hipcont}. Let $n_0$ be the smallest positive integer such that $\lambda_1:=c\lambda^{n_0} < 1$. Hence
\[
    \am (f_{\xi_0\ldots\xi_{nn_0-1}})'(p) \fm \leq \lambda_1^n \quad \text{for all}\,\,n \geq 1 \,\,\text{and for all}\,\,(\xi,p) \in H.
\]

By contradiction, suppose that for each $M \geq 1$ there exists $\xi \in \Sigma_N$ so that $\# H_\xi > M$.

Fix  $\lambda_2 \in (\lambda_1,1)$. By compactness of $I$ there exists $\varepsilon > 0$ so that
\begin{equation}\label{eq:derivadaforadeH}
    \am (f_{\xi_0\ldots\xi_{n_0-1}})'(x) \fm \leq \lambda_2 \quad \text{for all}\,\,x \in (p-\varepsilon, p +\varepsilon) \cap I\,\,\text{and for all}\,\,(\xi,p) \in H.
\end{equation}
Fix $N_1 \in \natur$ and $M_0 \in \natur$ satisfying\footnote{$\left\lceil a \right\rceil:= \max \ak k \in \mathbb{Z} \colon k \leq a \fk$}
\begin{equation}\label{eq:escdeM_0}
    N_1 \varepsilon >1
    \quad\text{ and }\quad
     \left\lceil\frac{M_0}{N_1^{N_1}} \right\rceil > 1.
\end{equation}
Let $\xi =(\ldots\xi_{-1}\xi_0\xi_1\ldots)\in \Sigma_N$ such that $\# H_\xi > M_0$. Let $g=f_{\xi_{-n_0}\ldots\xi_{-1}}$. By $F$-invariance of $H$ and the fact that the fiber maps are local diffeomorphisms, we have $\# H_{\sigma^k(\xi)} > M_0$ for all $k \in \mathbb{Z}$.
Let $q_1 < \cdots < q_{M_0}$ be an enumeration of $M_0$ points in $H_\xi$. By monotonicity of the fiber maps, we know that $g^{-1}(q_i)$ is between $g^{-1}(q_1)$ and $g^{-1}(q_{M_0})$ for all $i=2,\ldots,M_0-1$.  Fix $n_1 \in \natur$ such that
\begin{equation}\label{contradicao1}
    \Big(\frac{1}{\lambda_2}\Big)^{n_1} \am q_{M_0} - q_1 \fm > 1.
\end{equation}
We consider two cases.

\smallskip\noindent\textbf{Case 1:} If any interval of length $\varepsilon$ between $g^{-1}(q_1)$ and $g^{-1}(q_{M_0})$ intersects $H_{\sigma^{-n_0}(\xi)}$, then 
 \eqref{eq:derivadaforadeH} implies
\[
    \am g'(x) \fm \leq \lambda_2 \, \text{for all}\,\,x \,\,\text{between}\,\,g^{-1}(q_1)\,\,\text{and}\,\, g^{-1}(q_{M_0}),
\]
        and hence, by the Mean Value Theorem, we have that
\begin{equation}\label{eq:expansao}
  	\am q_1 - q_{M_0} \fm  \leq \lambda_2 \am g^{-1}(q_1) - g^{-1}(q_{M_0}) \fm.
\end{equation}

\smallskip\noindent\textbf{Case 2:} Otherwise, denote by $J_1,\ldots,J_l$, $l \geq 1$, the maximal (disjoint) intervals  of length at least $\varepsilon$ whose intersections with $H_{\sigma^{-n_0}(\xi)}$ are empty.
By \eqref{eq:escdeM_0}, we have
\begin{equation}\label{eq:cotaparal}
    l < N_1.
\end{equation}
Assume that $J_i=(a_i,b_i)$ with $b_i=a_{i+1}$ for $i=1,\ldots,l-1$.
Those intervals are sent by the dynamics of $F^{-n_0}$ into intervals again of length at least $\varepsilon$ (and also not intersecting $H$).
Indeed, as the fiber maps are injective, $g^{-1}(q_i)$, $i=1,\ldots,M_0$, are pairwise distinct points in $H_{\sigma^{-n_0}(\xi)}$ and by the Pigeonhole Principle and \eqref{eq:cotaparal}, at least $M_1 := \left\lceil M_0/N_1 \right\rceil$ of them,  say $q_1^1 < \ldots < q_{M_1}^1$,  are either between $0$ and $J_1$ or  between $J_{i_0}$ and $J_{i_0+1}$ for some $i_0=1,\ldots,l-1$ or  between $J_l$ and $1$. Note that $M_1 \geq 2$ by \eqref{eq:escdeM_0}. Without loss of generality, assume that $q_1^1,\ldots, q_{M_1}^1$ are between $J_{i_0}$ e $J_{i_0+1}$. The other cases are analogous.
For each $q \leq a_{i_0}$ such that $q \in H_{\sigma^{-n_0}(\xi)}$, by \eqref{eq:derivadaforadeH} and the Mean Value Theorem,  we have
\[
    \am (f_{\xi_{-2n_0}\ldots\xi_{-n_0-1}})^{-1}(q) - (f_{\xi_{-2n_0}\ldots\xi_{-n_0-1}})^{-1}\Big(q +\frac{\varepsilon}{2}\Big) \fm
    \geq \frac{1}{\lambda_2} \frac{\varepsilon}{2}
    > \frac{\varepsilon}{2},
\]
and similarily
\[
    \am (f_{\xi_{-2n_0}\ldots\xi_{-n_0-1}})^{-1}(q_1^1) - (f_{\xi_{-2n_0}\ldots\xi_{-n_0-1}})^{-1}\Big(q_1^1 -\frac{\varepsilon}{2}\Big) \fm  \geq \frac{1}{\lambda_2} \frac{\varepsilon}{2} > \frac{\varepsilon}{2}.
\]
As  $q_1^1 -q\geq \varepsilon$, monotonicity of the fiber maps give
\[
    \am (f_{\xi_{-2n_0}\ldots\xi_{-n_0-1}})^{-1}(q_1^1) - (f_{\xi_{-2n_0}\ldots\xi_{-n_0-1}})^{-1}(q) \fm  > \varepsilon.
\]
Thus,
$(f_{\xi_{-2n_0}\ldots\xi_{-n_0-1}})^{-1}(J_{i_0})$ is an interval of length greater or equal to $\varepsilon$ not intersecting $H_{\sigma^{-2n_0}(\xi)}$.

If we were in Case 1, we now repeat the same process for the points $g^{-1}(q_i)$ instead of $q_i$, $i =1,\ldots, M_0$, and if  we were in Case 2, then we do so for 
$q_i^1$, $i =1,\ldots, M_1$. 

This way, repeat this process, we will be in Case 1 at most $m_1$ times for some $m_1<n_1$. Indeed, otherwise if $n_1$ consecutive times occurs Case 1, together with  \eqref{contradicao1} we  would obtain
\begin{equation}\label{eq:expansao2}
    \am (f_{\xi_{-n_1n_0}\ldots\xi_{-1}})^{-1}(q_1) - (f_{\xi_{-n_1n_0}\ldots\xi_{-1}})^{-1}(q_{M_0}) \fm  \geq \Big(\frac{1}{\lambda_2}\Big)^{n_1}\am q_1 - q_{M_0} \fm
    >1.
\end{equation}
Hence, there is $m_1<n_1$ such that at the $m_1$th repetition of the process we are the first time in Case 2
and we let $q_1^{m_1} <\ldots < q_{M_1}^{m_1}$, with $M_1 = \left\lceil M_0/N_1 \right\rceil$, be an enumeration of the points in $H_{\sigma^{-m_1n_0}(\xi)}$ as above (note again that $M_1 \geq 2$ by the choice of $M_0$ in \eqref{eq:escdeM_0}).

Repeating the same arguments, we can recursively define positive integers $m_j$ and $n_j$, $j\ge1$, satisfying
\begin{equation}\label{contradicao2}
    \Big(\frac{1}{\lambda_2}\Big)^{n_j} \am q_{M_1}^{m_{j-1}} - q_1^{m_{j-1}} \fm > 1.
\end{equation}
 such that in this process $m_j$ consecutive times there will occur Case 1 followed by Case 2, each time defining points $q_1^{m_j} <\ldots < q_{M_j}^{m_j}$ in $H_{\sigma^{-m_jn_0}(\xi)}$,  $M_j = \lceil M_0/N_1^j \rceil$.

 Because, each time we are in Case 2, those intervals of length at least $\varepsilon$ are maintained and since the sum of their lengths must be smaller than $1$, by \eqref{eq:escdeM_0} this case occurs at most $N_1$ times and hence this process  must remain in Case 1 only. However,  by the above, this is also impossible.  So, we arrive at a contradiction.
This concludes the proof of the theorem.
\hfill\qed


\subsection{Proof of Theorem \ref{prop:medhip}}

To start, we recall the following classical concept and result (see \cite{A,ABV}, for example).

\begin{definition}
Given a number $\varrho > 0$ and a point $(\xi,p) \in \Sigma_N \times I$,  a natural number $n$ is a \emph{hyperbolic time for $(\xi,p)$ (with respect to $F$) with exponent $\varrho$} if
\[
    \am (f_{\xi_0\ldots\xi_n})'(p) \fm \geq e^{(n+1)\varrho} \quad \text{and}
\]
\[
    \am (f_{\xi_m\ldots\xi_n})'(f_{\xi_0\ldots\xi_{m-1}}(p)) \fm \geq e^{(n-m+1)\varrho} \quad\text{for all} \,\,m=1,\ldots,n.
\]
\end{definition}

We define the \emph{forward Lyapunov exponent of $(\xi,p)$ (with respect to $F$)} by
\[
    \chi_+(\xi,p):= \lim_{n\to +\infty}\frac{1}{n} \log \am (f_{\xi_0  \ldots  \xi_{n-1}})'(p) \fm.
\]
whenever the limit exists.
The following is a consequence of the Lemma of Pliss (see \cite{M}, for example).

\begin{lemma}\label{densposthipb}
If $\chi_+(\xi,p) > \chi - \varepsilon > 0$ then $(\xi,p)$ has infinitely many hyperbolic times with exponent $\chi - \varepsilon$. Moreover, the density of hyperbolic times is positive, that is, there exists $d_0 > 0$ such that
\[
    \liminf_{n \to +\infty} \frac{1}{n} \# \ak 1 \leq k \leq n \colon k \,\,\text{is a hyperbolic time for}\,\,(\xi,p) \fk \geq d_0.
\]
\end{lemma}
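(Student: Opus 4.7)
The plan is to reduce the lemma to a direct application of the classical Pliss Lemma (as cited via \cite{M}) applied to the sequence
\[
    a_j := \log\lvert f'_{\xi_{j-1}}\bigl(f_{\xi_0\ldots\xi_{j-2}}(p)\bigr)\rvert,
    \qquad j \geq 1,
\]
with the convention $f_{\xi_0\ldots\xi_{-1}}(p) := p$. Two ingredients about this sequence are what is needed. First, by the chain rule,
\[
    S_n := \sum_{j=1}^n a_j = \log\lvert (f_{\xi_0\ldots\xi_{n-1}})'(p)\rvert,
\]
so the hypothesis $\chi_+(\xi,p) > \chi - \varepsilon$ becomes $\lim_n S_n/n > \chi - \varepsilon$. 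Second, since each $f_i$ is a $C^1$ diffeomorphism on the compact interval $I$, there is a uniform constant $A>0$ (depending only on $F$) such that $a_j \leq A$ for every $j$ and every choice of $(\xi,p)$.

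Fix $\varepsilon' > 0$ small enough that $\chi_+(\xi,p) \geq \chi - \varepsilon + \varepsilon'$, so that $S_N \geq (\chi-\varepsilon+\varepsilon')N$ for all sufficiently large $N$. Applying the Pliss Lemma with $c_1 := \chi-\varepsilon$, $c_2 := \chi-\varepsilon+\varepsilon'$, and upper bound $A$, one obtains indices $1 \leq n_1 < \cdots < n_{\ell_N} \leq N$ with
\[
    \ell_N \geq \frac{c_2 - c_1}{A - c_1}\,N = \theta\, N,
    \qquad
    \theta := \frac{\varepsilon'}{A - \chi + \varepsilon} > 0,
\]
such that for every such $n_i$ and every $0 \leq m < n_i$,
\[
    S_{n_i} - S_m = \sum_{j=m+1}^{n_i} a_j \geq (\chi - \varepsilon)(n_i - m).
\]

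Re-expressing through the chain rule, $S_{n_i} - S_m = \log\lvert (f_{\xi_m\ldots\xi_{n_i-1}})'(f_{\xi_0\ldots\xi_{m-1}}(p))\rvert$, and this last inequality is exactly the defining condition for $n_i - 1$ to be a hyperbolic time for $(\xi,p)$ with exponent $\chi - \varepsilon$ (the off-by-one coming from the convention in the definition that "time $n$" corresponds to the composition $f_{\xi_0\ldots\xi_n}$ of $n+1$ fiber maps). As this holds for arbitrarily large $N$, one simultaneously obtains infinitely many hyperbolic times and the lower density bound $d_0 := \theta$ for the set of hyperbolic times, which is exactly the claim.

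The proof is essentially bookkeeping once the Pliss Lemma is in hand; I do not anticipate any genuine obstacle. The only mildly technical points are verifying the uniform upper bound $A$ on $a_j$ (immediate from $C^1$-regularity and compactness of $I$) and aligning the off-by-one shift between the definition of hyperbolic time used here and the form of the Pliss Lemma.
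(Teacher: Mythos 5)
Your proposal is correct and follows exactly the route the paper intends: the paper gives no written proof of this lemma and simply cites it as a consequence of the Pliss Lemma, and your argument is the standard derivation (apply Pliss to the logarithmic derivative cocycle, use the uniform $C^1$ bound on the fiber maps over the compact interval, and adjust for the off-by-one in the definition of hyperbolic time). No gaps.
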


We also recall the concept of distortion.

\begin{definition}
Let $g\colon J \to I$, $J\subset I$ a compact interval, be differentiable  with $g'(x) \neq 0$ for all $x \in J$. For  $Z \subset J$, the \emph{maximal distortion of $g$ in $Z$} is defined by
\[
    \Dist g|_Z := \displaystyle\sup_{x,y \in Z} \frac{\am g'(x) \fm}{\am g'(y) \fm}.
\]
Given $\vartheta > 0$, let
\[
    D(\vartheta) := \displaystyle\max_{x \in I} \displaystyle\max_{i=1,\ldots,N} \ak \Dist f_i |_{\ac x - {\vartheta}/{2},x + {\vartheta}/{2}\fc \cap I}, \Dist f_i^{-1} |_{\ac x - {\vartheta}/{2},x + {\vartheta}/{2}\fc \cap I} \fk.
\]
\end{definition}
Note that $D(\vartheta) \rightarrow 1$ when $\vartheta \rightarrow 0$.

We  denote by $\am J \fm$ the length of an interval $J \subset I$. We borrow the following result on distortion control.

\begin{lemma}[{\cite[Lemma 4.1]{DGR2}}]\label{lem:contraminv}
Given $(\xi,p) \in \Sigma_N \times I$ such that $\chi=\chi_+(\xi,p) > 0$, let $\varepsilon > 0$ be such that $\chi - 2 \varepsilon > 0$. Let $n \geq 1$ be a hyperbolic time for $(\xi,p)$ with exponent $\chi - \varepsilon$, $\vartheta > 0$ small enough such that $D(\vartheta) < e^{\varepsilon}$, and $J_{n+1} \subset I$ an interval containing $f_{\xi_0\ldots\xi_n}(p)$ such that $\am J_{n+1} \fm \leq \vartheta$ and such that $(f_{\xi_k\ldots\xi_n})^{-1}|_{J_{n+1}}$ is well defined for all $k=0,\ldots,n$. Then we have
\[
    \am J_k \fm \leq \vartheta e^{-(n+1-k)(\chi - 2 \varepsilon)} \quad \text{for all}\,\, k=0,\ldots,n+1,\quad \text{where}\,\, J_k := (f_{\xi_k\ldots\xi_n})^{-1}(J_{n+1}).
\]
\end{lemma}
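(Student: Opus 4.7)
The plan is a backward induction on $k$, descending from $k=n+1$ down to $k=0$, establishing at each step the bound $\am J_k\fm\le\vartheta e^{-(n+1-k)(\chi-2\varepsilon)}$. The base case $k=n+1$ is exactly the hypothesis $\am J_{n+1}\fm\le\vartheta$. For the inductive step, I would assume the stated bound for every index in $\{k+1,\ldots,n+1\}$; since $\chi-2\varepsilon>0$, each such $\am J_j\fm$ is in particular at most $\vartheta$, so $J_j$ sits inside some window $[x-\vartheta/2,x+\vartheta/2]\cap I$, which is exactly what licenses the pointwise distortion bound encoded in $D(\vartheta)$.

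First, I would control the distortion of the backward composition. Writing $(f_{\xi_k\ldots\xi_n})^{-1}=f_{\xi_k}^{-1}\circ\cdots\circ f_{\xi_n}^{-1}$ and observing that the successive intermediate images of $J_{n+1}$ are exactly $J_n, J_{n-1},\ldots,J_{k+1}$, the chain rule for distortion combined with the induction hypothesis yields
\[
    \Dist\bigl((f_{\xi_k\ldots\xi_n})^{-1}\big|_{J_{n+1}}\bigr)
    \le \prod_{j=k}^{n}\Dist\bigl(f_{\xi_j}^{-1}\big|_{J_{j+1}}\bigr)
    \le D(\vartheta)^{n-k+1}
    < e^{(n-k+1)\varepsilon}.
\]
Second, I would apply the Mean Value Theorem to pass from $\am J_{n+1}\fm$ back to $\am J_k\fm$: setting $p_k:=f_{\xi_0\ldots\xi_{k-1}}(p)\in J_k$ (with the convention $p_0:=p$), one has $\am J_k\fm=\am ((f_{\xi_k\ldots\xi_n})^{-1})'(\tilde y)\fm\cdot \am J_{n+1}\fm$ for some $\tilde y\in J_{n+1}$. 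Comparing the derivative at $\tilde y$ to its value at the image $f_{\xi_0\ldots\xi_n}(p)\in J_{n+1}$ via the distortion bound just obtained, and then invoking the hyperbolic-time inequality $\am (f_{\xi_k\ldots\xi_n})'(p_k)\fm\ge e^{(n-k+1)(\chi-\varepsilon)}$, I would obtain
\[
    \am J_k\fm\le \vartheta\cdot\frac{e^{(n-k+1)\varepsilon}}{e^{(n-k+1)(\chi-\varepsilon)}}=\vartheta e^{-(n-k+1)(\chi-2\varepsilon)},
\]
which closes the induction.

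The crux, and the only point that really needs care, is the bootstrap: applying the distortion estimate on every stage $j\in\{k,\ldots,n\}$ requires $\am J_{j+1}\fm\le\vartheta$, and this is supplied exactly by the induction hypothesis for the already-treated indices $j+1\in\{k+1,\ldots,n+1\}$. The arithmetic balance $D(\vartheta)<e^\varepsilon$ together with $\chi-2\varepsilon>0$ is what makes the scheme work: the accumulated distortion loss $e^{(n-k+1)\varepsilon}$ is absorbed by the hyperbolic expansion $e^{(n-k+1)(\chi-\varepsilon)}$, leaving the net contraction rate $\chi-2\varepsilon$ that drives the exponential shrinking of the pull-back intervals.
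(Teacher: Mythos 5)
Your argument is correct: the backward induction with the bootstrap that the already-established bounds keep each intermediate interval $J_{j+1}$ inside a window of diameter $\vartheta$ (so that $D(\vartheta)$ applies at every stage), combined with the chain rule for distortion, the Mean Value Theorem, and the hyperbolic-time lower bound $\am (f_{\xi_k\ldots\xi_n})'(p_k)\fm\ge e^{(n-k+1)(\chi-\varepsilon)}$, gives exactly the stated estimate. Note that this paper does not prove the lemma at all --- it is quoted from \cite[Lemma 4.1]{DGR2} --- and your proof is the standard bounded-distortion argument used there, so there is nothing to flag.
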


We are now ready to start the proof of Theorem \ref{prop:medhip}. We will prove the theorem assuming that $\chi(\mu) > 0$, the other case is analogous and will be omitted.

To sketch the proof, we will show  that $F^{-1}$ is contractive in a neighborhood in the fiber of points from $H$ and then $F$ expands in some neighborhood (in the fiber) of these points. The compactness of $I$ does not allow the existence of many points in the intersection of $H$ with each fiber. The others statements of the theorem follow from the monotonicity of the fiber maps.

Let $H$ be a $F$-invariant set such that $\mu(H) = 1$.  Without loss of generality, we can assume that every $(\xi,p)\in H$ satisfies $\chi_+(\xi,p) =\chi:= \chi(\mu) > 0$.
Given $(\xi,p) \in H$, define
\[
    A_{n,(\xi,p)}:=\ak 1 \leq k \leq n \colon k \,\,\text{is hyperbolic time for}\,\,(\xi,p)\,\,\text{with exponent}\,\,\chi(\mu) - \varepsilon \fk.
\]
Choosing $\varepsilon \in(0, \chi(\mu)/2)$, by Lemma \ref{densposthipb} there exists $d_0 > 0$ such that for all $(\xi,p) \in H$ we have
\[
    \liminf_{n \to +\infty} \frac{1}{n} \# A_{n,(\xi,p)} \geq d_0.
\]

Fix $\vartheta > 0$ such that $D(\vartheta) < e^\varepsilon$ and $N_1 \in \natur$ such that
\begin{equation}\label{eq:escdeN1}
    (N_1-1) \vartheta > 1.
\end{equation}
Fix also $0 < d_1 < d_0$ and let $j$ be a natural number such that
\begin{equation}\label{eq:escdel}
    j d_1 > N_1 -1 + d_1.
\end{equation}

By contradiction, let us suppose that for each $M \geq 1$ there exists $\xi \in \Sigma_N$ such that $\# H_\xi > M$.
Hence, for $j$ chosen as above, there exists $\xi \in \Sigma_N$ such that $\# H_\xi > j$. By $F$-invariance of $H$ we hence have $\# H_{\sigma^k(\xi)} > j$ for all $k \in \mathbb{Z}$.

\begin{claim}\label{claim:hyptim}
For every choice of points $x_1,\ldots,x_j$ in $H_\xi$ there is a subset $\ak x_{i_1},\ldots,x_{i_{N_1}} \fk \subset \ak x_1,\ldots,x_j \fk$ of points  which have infinitely many  common  hyperbolic times.
\end{claim}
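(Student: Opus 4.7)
The plan is to use Lemma \ref{densposthipb} (positive density of hyperbolic times) combined with a double counting / pigeonhole argument that is tailored to the arithmetic condition \eqref{eq:escdel}, namely $j d_1 > N_1 - 1 + d_1$. For each $x_i \in \{x_1,\ldots,x_j\} \subset H_\xi$, let $T_i \subset \natur$ denote the set of hyperbolic times of $(\xi,x_i)$ with exponent $\chi(\mu) - \varepsilon$. Since $(\xi,x_i) \in H$, Lemma \ref{densposthipb} together with the choice $d_1 < d_0$ gives that the lower density of $T_i$ is at least $d_1$. In particular, there is $n_0 \in \natur$ such that for every $n \geq n_0$ and every $i \in \{1,\ldots,j\}$,
\[
    \#(T_i \cap [1,n]) \geq d_1 \, n.
\]

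Next I would count incidences. For each $n \geq n_0$, setting $N_k := \#\{i \in \{1,\ldots,j\} : k \in T_i\}$, Fubini for counting measures yields
\[
    \sum_{k=1}^{n} N_k = \sum_{i=1}^{j} \#(T_i \cap [1,n]) \geq j\, d_1\, n.
\]
I claim that there are infinitely many $k$ with $N_k \geq N_1$. Suppose to the contrary that there exists $K$ such that $N_k \leq N_1 - 1$ for all $k > K$. Using the trivial bound $N_k \leq j$, for every $n > K$ we would have
\[
    j\, d_1\, n \leq \sum_{k=1}^{n} N_k \leq K j + (N_1 - 1)(n - K),
\]
which rearranges to $(j d_1 - (N_1 - 1)) n \leq K j$. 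Since \eqref{eq:escdel} gives $j d_1 - (N_1 - 1) > d_1 > 0$, this fails for $n$ sufficiently large, a contradiction.

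Finally, for each $k$ with $N_k \geq N_1$ pick a subset $S_k \subset \{1,\ldots,j\}$ with $\# S_k = N_1$ and $k \in \bigcap_{i \in S_k} T_i$. There are only $\binom{j}{N_1}$ choices for such a subset, so by the pigeonhole principle at least one subset $\{i_1,\ldots,i_{N_1}\}$ must equal $S_k$ for infinitely many $k$. These $k$ are by construction common hyperbolic times (with exponent $\chi(\mu) - \varepsilon$) for the points $(\xi, x_{i_1}), \ldots, (\xi, x_{i_{N_1}})$, proving the claim. The only mildly delicate point is ensuring that the uniform lower density $d_1$ (smaller than $d_0$) holds simultaneously for all $j$ points from some threshold on, but this is immediate since we fix finitely many points.
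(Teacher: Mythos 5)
Your proof is correct, and it rests on the same two pillars as the paper's proof: the positive lower density $d_1$ of hyperbolic times from Lemma \ref{densposthipb} and a pigeonhole argument driven by the arithmetic condition \eqref{eq:escdel}. The difference is organizational, and it is a genuine improvement. The paper first applies pigeonhole separately for each fixed $n$ to produce a common hyperbolic time $t(n)\in[1,n]$ for some $N_1$ of the points, and then needs a second, somewhat ad hoc contradiction argument (the auxiliary bound $c$ and the choice of $n'$) to establish that $t(n)$ can be taken arbitrarily large. Your double-counting step — summing the incidence counts $N_k$ over $k\in[1,n]$ and comparing the lower bound $jd_1 n$ against the upper bound $Kj+(N_1-1)(n-K)$ that would follow if only finitely many $k$ had $N_k\ge N_1$ — collapses those two stages into one and directly produces infinitely many indices $k$ that are simultaneously hyperbolic for at least $N_1$ of the $j$ points, after which the final pigeonhole over the finitely many $N_1$-element subsets is identical to the paper's closing step. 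The small rearrangement $(jd_1-(N_1-1))n\le Kj$ that you use is a harmless weakening of the exact bound $K(j-(N_1-1))$ and still contradicts \eqref{eq:escdel} for $n$ large, so there is no gap.
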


\begin{proof}
Given $\ak x_1,\ldots,x_j \fk \subset H_\xi$, for every $i=1,\ldots,j$ there is $n_0$ such that for every $n\ge n_0^i$ we have
\[
	\frac1n\#A_{n,(\xi,x_i)}>d_1.
\]
Hence, every $x_i$ has at least $\left\lceil d_1n \right\rceil$ hyperbolic times between $1$ and $n$.

 As $jd_1 > N_1-1$, by \eqref{eq:escdel} we have $jd_1n > (N_1-1)n$ and, by the Pigeonhole Principle,  there exists $1 \leq t(n) \leq n$ such that at least $N_1$ of those points, say $x_{i_1}(n),\ldots,x_{i_{N_1}} (n)$, which have $t(n)$ as common hyperbolic time.

It remains to show that $t(n)$ can be arbitrarily big as $n\to\infty$.
By contradiction, suppose that  there exists $c \geq 1$ such that for all $n \geq n_0$ and for any choice of $x_{i_1}(n),\ldots,x_{i_{N_1}} (n)$ and of $t(n)$, we have $t(n) \leq c$. As
\[
    \lim_{n \to \infty} \frac{(N_1-1)(n-c)}{d_1n-c} = \frac{N_1-1}{d_1},
\]
there is $n' \geq n_0$ such that $d_1n'-c>0$ and
\begin{equation}\label{eq:3}
    \frac{(N_1-1)(n'-c)}{d_1n'-c} < \frac{N_1-1}{d_1} + 1 < j,
\end{equation}
where the last inequality follows from the choice of $j$ in \eqref{eq:escdel}. However, for each $i=1,\ldots,j$,  $x_i$ has at least $d_1n'-c$  hyperbolic times between $c+1$ and $n'$. Then it follows from \eqref{eq:3} and from Pigeonhole Priciple that there exists $c+1 \leq t^{*}(n') \leq n'$ such that at least $N_1$ points in $\ak x_1(n'),\ldots,x_j(n') \fk$ has $t^{*}(n')$ as a common hyperbolic time. As $t^{*}(n') > c$, we have a contradiction.

Finally, as there are only a finite number of choices of  $N_1$ points in $\ak x_1,\ldots,x_j\fk$, there exists at least one choice, we say $x_{i_1},\ldots,x_{i_{N_1}}$ which is repetitively chosen for infinitely many $n \geq n_0$ and such that $\lim_{n \to +\infty} t(n) = + \infty$.
\end{proof}

Let  $x_1,\ldots,x_{N_1} \in H_\xi$ be points with a arbitrarily large common hyperbolic times as in Claim~\ref{claim:hyptim}.
By choice of $N_1$ in \eqref{eq:escdeN1}, for each such a time $n$ there exist different points $y,z\in\ak x_{i_1}, \ldots, x_{i_{N+1}} \fk$ such that $\am f_{\xi_0\ldots\xi_{n}}(y) - f_{\xi_0\ldots\xi_{n}}(z) \fm < \vartheta$ for which, by Lemma \ref{lem:contraminv}, we obtain that
\[
    \am y-z \fm < \vartheta e^{-(n+1)(\chi(\mu) - 2 \varepsilon)}.
\]
As there is only a finite number of choices for $y$ and $z$, there exists  one which repeats infinitely often as $n\to\infty$. However, the latter inequality implies that $y=z$ leading to a contradiction.

This proves that there exists $M \geq 1$ such that $\# H_\xi \leq M$ for all $\xi \in \Sigma_N$.

Finally, for each $\xi \in \Pi_1(H)$ define  $a_\xi := \min H_\xi$ and $b_\xi := \max H_\xi$ and  consider
\[
    A := \ak (\xi,a_\xi) \colon \xi \in \Pi_1(H) \fk \quad \text{and} \quad B := \ak (\xi,b_\xi) \colon \xi \in \Pi_1(H) \fk,
\]
which are measurable. Moreover, by monotonicity of each fiber map and by the $F$-invariance of $H$  we have that $A \cup B$ is a $F$-invariant set. As $\mu$ is ergodic,  either $\mu(A \cup B) = 0$ or $\mu(A \cup B) =1$. If $\mu(A \cup B) =1$, it suffices to define
\[
    \varphi^- \colon\Pi_1(H) \to I,\quad \xi \mapsto a_\xi
    \quad\text{ and }\quad
        \varphi^+ \colon \Pi_1(H) \to I, \quad \xi \mapsto b_\xi.
\]
Otherwise, if $\mu(A \cup B) = 0$, we consider the set $H \setminus (A \cup B)$, which has full measure $\mu$ and is also invariant
repeat the previous analysis. As $\# H_\xi \leq M$ for all $\xi \in \Pi_1(H)$, at some point this process will finish. This concludes the proof of the theorem.
\qed

\begin{remark}{\rm
Note that it can occur that $(\Pi_1)_\ast\mu$-almost everywhere $\varphi^- = \varphi^+$.
On the other hand, by ergodicity of $\mu$, $\# H_\xi$ is constant $(\Pi_1)_*\mu$-almost everywhere and hence, by injectivity of the fiber maps, we either have $\varphi^- = \varphi^+$   or $\varphi^- < \varphi^+$  almost everywhere.
}\end{remark}

\begin{definition}
We say that $\mu$ has a \emph{simple graph} if $\varphi^- = \varphi^+$ in almost every point. In the other case, we say that $\mu$ has a \emph{non-simple graph}.
\end{definition}

\begin{corollary}
If $\mu$ has a non-simple graph then $\Gamma^-$ and $\Gamma^+$ both have positive measure and are both non-invariant.
\end{corollary}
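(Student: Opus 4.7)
The plan is to combine the atomic fiber disintegration of $\mu$ with its $F$-invariance and ergodicity. By the non-simple hypothesis, $\varphi^- < \varphi^+$ almost surely, so $\Gamma^- \cap \Gamma^+$ is $\mu$-null; together with $\mu(\Gamma^- \cup \Gamma^+) = 1$ from Theorem \ref{prop:medhip}, this gives $\mu(\Gamma^-) + \mu(\Gamma^+) = 1$. Disintegrating $\mu = \int \mu_\xi \, d(\Pi_1)_\ast\mu(\xi)$ over the base, the fiber measure has the form
\[
    \mu_\xi = p(\xi)\,\delta_{\varphi^-(\xi)} + (1-p(\xi))\,\delta_{\varphi^+(\xi)}
\]
for some measurable function $p\colon \Pi_1(H) \to [0,1]$, and hence $\mu(\Gamma^-) = \int p \, d(\Pi_1)_\ast\mu$ and $\mu(\Gamma^+) = \int (1-p) \, d(\Pi_1)_\ast\mu$.

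Next, I would exploit the pushforward relation $(f_{\xi_0})_\ast \mu_\xi = \mu_{\sigma\xi}$: because $f_{\xi_0}$ is monotone and sends the (ordered) pair $(\varphi^-(\xi),\varphi^+(\xi))$ to $(\varphi^-(\sigma\xi),\varphi^+(\sigma\xi))$ when orientation-preserving and to its swap otherwise, one obtains $p(\sigma\xi) = p(\xi)$ or $p(\sigma\xi) = 1-p(\xi)$ according to the orientation of $f_{\xi_0}$. Hence the product $p(1-p)$ is $\sigma$-invariant, and by ergodicity of $(\Pi_1)_\ast\mu$ (which is $\sigma$-ergodic as a factor of the $F$-ergodic $\mu$) it equals a constant $c \in [0,1/4]$ almost surely. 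The crucial step is ruling out $c=0$: if $c=0$, then $\mu_\xi$ is Dirac almost surely, so the fiberwise support of $\mu$ is single-valued, and after refining $H$ in the construction of Theorem \ref{prop:medhip} so that $H_\xi$ coincides with $\supp \mu_\xi$ (by discarding the $F$-invariant $\mu$-null subset of $H$ formed by fiber points of zero $\mu_\xi$-weight, which is $F$-invariant because the identity $\mu_\xi(\{x\}) = \mu_{\sigma\xi}(\{f_{\xi_0}(x)\})$ shows that non-atoms map to non-atoms), the extremes $\varphi^\pm$ would coincide almost surely, contradicting non-simplicity. Therefore $c>0$, which forces $p(\xi), 1-p(\xi) \in (0,1)$ almost surely and hence $\mu(\Gamma^-), \mu(\Gamma^+) > 0$.

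Finally, since $\mu(\Gamma^\pm) \in (0,1)$, if $\Gamma^-$ were $F$-invariant modulo $\mu$-null sets, then $F$-ergodicity of $\mu$ would force $\mu(\Gamma^-) \in \{0,1\}$, a contradiction; symmetrically $\Gamma^+$ is not $F$-invariant. The main obstacle I expect is the step ruling out $c=0$: extracting a genuine contradiction from the degenerate Dirac disintegration requires the refinement identifying $H_\xi$ with the fiberwise support of $\mu$, a step which is implicit but not explicitly carried out in the iterative definition of $\varphi^\pm$ in the proof of Theorem \ref{prop:medhip}.
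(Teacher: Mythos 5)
The paper does not record an explicit proof of this corollary, so there is no text to compare against verbatim; I will evaluate your argument on its own terms.

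Your disintegration approach, and in particular the identity $p(\sigma\xi)\in\{p(\xi),1-p(\xi)\}$ together with the $\sigma$-invariance of $p(1-p)$, is correct and is a natural way to organize the argument. The deduction of non-invariance from positivity via ergodicity is also correct. The weak point is exactly where you flag it: the step ruling out $c=0$. Two remarks. First, the device $c=p(1-p)$ is not quite the right dichotomy here, because $c=0$ does not force $p$ to be constant; it is consistent with $\lambda_0(\{p=0\})$ and $\lambda_0(\{p=1\})$ both being positive (the sets $\{p=0\}$ and $\{p=1\}$ are not $\sigma$-invariant, since orientation-reversing fiber maps swap them), in which case both $\mu(\Gamma^-)$ and $\mu(\Gamma^+)$ are positive even though $c=0$. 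So the dichotomy you actually need is ``$p\equiv 0$ or $p\equiv 1$'' versus the rest, not $c=0$ versus $c>0$; the latter is sufficient but stronger than what the corollary asserts. Second, and more seriously, your contradiction in the Dirac case genuinely requires that ``non-simple'' be a property of $\mu$ alone (that the fiberwise disintegration has two atoms a.e.), not merely that $\varphi^-<\varphi^+$ a.e.\ for the particular $F$-invariant set $H$ fed into Theorem~\ref{prop:medhip}. The iterative construction in the proof of that theorem only discards the extremal pair $A\cup B$ when it is $\mu$-null; it does not discard a single extremal graph that is null while the other carries full mass. Thus, as literally defined, $\varphi^\pm$ may include a ``phantom'' extreme with zero $\mu$-mass (and with the correct Lyapunov exponent, so it cannot be excluded by the hypotheses on $H$), and in that situation $\varphi^-<\varphi^+$ a.e.\ yet $\mu(\Gamma^-)=0$, making the corollary false under the literal reading. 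Your proposed fix---refining $H$ so that $H_\xi=\supp\mu_\xi$, using $\mu_\xi(\{x\})=\mu_{\sigma\xi}(\{f_{\xi_0}(x)\})$ to see the refined set is still $F$-invariant---is the right repair, and it implicitly tightens the definition of ``non-simple''; once one accepts that tightening, the detour through $c=p(1-p)$ becomes unnecessary because non-simplicity then directly yields $p(\xi)\in(0,1)$ a.e., hence $\mu(\Gamma^\pm)=\int p\,d\lambda_0,\int(1-p)\,d\lambda_0>0$, and non-invariance follows at once from ergodicity. In short: your core mechanism is sound, your self-diagnosis of the gap is accurate, and the gap reflects an ambiguity that the paper leaves unresolved rather than a defect specific to your argument.
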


Ending this section let us see some examples. 

\begin{example*}
Suppose that all fiber maps $f_i$ preserve orientation.
Let $\mu$ be a hyperbolic ergodic measure. Then, by Theorem \ref{prop:medhip}, $\mu$ has a simple graph. In fact, in this case the sets $\Gamma^{-}$ and $\Gamma^{+}$ from the statement of the theorem are $F$-invariant and one of them has full measure.

Consider  a hyperbolic periodic orbit $\mathcal{O}(\xi,p)$ of a periodic point with period $n$ such that $\am (f_{\xi_0\ldots\xi_{n-1}})'(p) \fm \neq 1$ which is a hyperbolic set satisfying $ \# H_\omega = 1$ for all $\omega \in \Pi_1(\mathcal{O}(\xi,p))$.
\end{example*}

\begin{example*}[Bi-graph]
The simplest example of a bi-graph occurs if the step skew-product has two fiber maps $f_1$ and $f_2$ such that $f_1$ preserves orientation and $f_2$ reverses and such that there exist points $0 < p_1 < p_2 < 1$ satisfying
\[
        f_1(p_1) = p_1,\, f_1(p_2) = p_2,\, f_2(p_1) = p_2 \,\,\text{and}\,\, f_2(p_2) = p_1.
\]
Then $((1212)^\mathbb{Z},p_1)$ is periodic with period $4$. If it is hyperbolic then the orbit $\mathcal{O}((1212)^\mathbb{Z},p_1)$ is an example of hyperbolic set with two points in each fiber it intersects. Moreover, if we consider $\mu$ the average of the Dirac measures supported in the points from $\mathcal{O}((1212)^\mathbb{Z},p_1)$
then we have that the set $H$ from Theorem \ref{prop:medhip} (chosen with respect to $\mu$) must be $\mathcal{O}((1212)^\mathbb{Z},p_1)$, that is, we have an example which the set $H$ is a bi-graph and the measure has a non-simple graph.
\end{example*}


\section{Topological attractors in a generic step skew-product}\label{sec:dat}

In this section we prove Theorem \ref{teo:TP}.
The general strategy (already indicated in \cite{KV}) is to consider an appropriate extension of the step skew-product $F$ introduced in Definition \ref{defdeF}, obtaining a new skew-product for which all fiber maps preserve orientation and then to apply \cite{KV}.
The results for the extended step skew-product will be carried over to $F$ by a semi-conjugation.

The topology considered in the space of step skew-products as well as the open and dense subset stated in Theorem \ref{teo:TP} will be described in Section ~\ref{subsec:cg}.
The definition of bi-strips is given in Section \ref{subsec:far} and the proof of Theorem \ref{teo:TP} will be completed in Section \ref{subsec:demTP}.

In this section we consider  a step skew-product $F$ as in Definition \ref{defdeF} and also  suppose that $f_i(I) \subset \ine(I)$ for all $i=1,\ldots,N$. To exclude the case already covered in~\cite{KV}, we will suppose $F$ has at least one fiber map reversing orientation.

\begin{remark}{\rm
	Theorem~\ref{teo:TP} can be extended to the more general case in which $\lambda_0$ is a quasi-Bernoulli measure%
\footnote{A measure $\lambda_0$ is said \emph{quasi-Bernoulli} if there exists $C \geq 1$ such that for all $l, m, n \in \mathbb{Z}$ with $l \leq m \leq n$ is valid that $C^{-1} \lambda_0([l;\omega_l\ldots\omega_{n}]) \leq \lambda_0([l;\omega_l\ldots\omega_m])\lambda_0([m;\omega_{m}\ldots\omega_{n}]) \leq C \lambda_0([l;\omega_l\ldots\omega_{n}])$ (see \eqref{eq:cilindro} for the precise definition of $[l;\omega_l\ldots\omega_n]$). It is not difficult to see that any Bernoulli measure, any Markov measure and any Gibbs measure are quasi-Bernoulli measures.}.
	In fact, as the proof of Theorem \ref{teo:TP} is based on the results from~\cite{KV}, it is sufficient to verify that those results are also valid for quasi-Bernoulli measures. For such, we observe that the Markov property is only used in the proof in~\cite[page 20, 6.13 Proposition]{KV}, which is also valid in the quasi-Bernoulli case.
	However, to simplify the exposition, we will restrict to the Markov case.
}\end{remark}

\subsection{Extended step skew-product}\label{subsec:ESSP}
We  will construct a new step skew-product which  is semi-conjugated to the first one and which has the property that all fiber maps preserve orientation.
Denote by
\[
    \mathcal{I}_P := \ak i \colon f_i \,\,\text{preserves orientation} \fk \quad \text{and} \quad  \mathcal{I}_R := \ak i \colon f_i \,\,\text{reverses orientation} \fk
\]
and consider the associated transition matrix $A = \big(a_{ij}\big)_{i,j = 1}^{2N}$ given by
\[
    a_{ij} := \left\{
                \begin{array}{ll}
                  1, \, & \hbox{if}\,\, i \in \mathcal{I}_P\,\,\text{and}\,\, j \in \ak 1,\ldots,N \fk, \,\mbox{or}\\
                   & \hbox{if}\,\, i \in \mathcal{I}_R \,\,\text{and}\,\, j \in \ak N+1,\ldots,2N \fk, \,\mbox{or}\\
                   & \hbox{if}\,\, i-N \in \mathcal{I}_P\,\,\text{and}\,\, j \in \ak N+1,\ldots,2N \fk, \,\mbox{or}\\
                   & \hbox{if}\,\, i-N \in \mathcal{I}_R \,\,\text{and}\,\, j \in \ak 1,\ldots,N \fk; \\
                  0, & \hbox{otherwise}.
                \end{array}
              \right.
\]
Denote by $\Sigma_A \subset \Sigma_{2N}$ the set of $A$-admissible sequences and by $\sigma_A\colon \Sigma_A \to \Sigma_A$ the shift map. Note that $\sigma_A$ is transitive.

To relate the shift maps $\sigma_A\colon\Sigma_A\to\Sigma_A$ and $\sigma\colon\Sigma_N \to \Sigma_N$, for each $i = 1,\ldots,2N$, write $\overline{i} := i \mod N$ and define the projection map
\[
    \pi \colon \Sigma_A \to \Sigma_N \quad \colon \quad \omega \mapsto \overline{\omega}, \quad \text{where}\,\, \,\overline{\omega}_n := \overline{\omega_n}\, \,\, \text{for all}\,\, n \in \mathbb{Z}.
\]

\begin{lemma}\label{lem:conj}
The projection $\pi$ is a semi-conjugation two-to-one between $\sigma_A$ and $\sigma$, that is, $\pi$ is continuous, surjective,
$\pi \circ \sigma_A = \sigma \circ \pi$ and each point in $\Sigma_N$ has exactly two pre-images by $\pi$ in $\Sigma_A$.
\end{lemma}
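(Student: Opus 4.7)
The plan is to verify the three claimed properties --- continuity, the semi-conjugation identity, and surjectivity with exactly two preimages at every point --- separately. Continuity of $\pi$ is immediate from its coordinate-wise definition and the fact that the product topologies on $\Sigma_A$ and $\Sigma_N$ make any map induced by a fixed map on the alphabet continuous. The identity $\pi\circ\sigma_A=\sigma\circ\pi$ follows directly from the definitions: both maps shift indices, and the operation $i\mapsto\overline{\imath}$ is applied independently at each coordinate, so it commutes with the shift. The real content of the lemma lies in counting preimages.

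For the preimage count, I would encode any lift $\omega\in\pi^{-1}(\xi)$ of a given $\xi\in\Sigma_N$ by a sequence of ``block labels'' $\epsilon=(\epsilon_n)_{n\in\mathbb{Z}}\in\{0,1\}^{\mathbb{Z}}$ defined by $\omega_n = \xi_n+\epsilon_n N$, so that $\epsilon_n=0$ means $\omega_n$ lies in the first block $\{1,\ldots,N\}$ and $\epsilon_n=1$ that it lies in the second block $\{N+1,\ldots,2N\}$. The key observation is that the four cases defining $A$ all collapse into a single recursion on $\epsilon$, namely
\[
    \epsilon_{n+1}\;\equiv\;\epsilon_n+s_{\xi_n}\pmod 2,\qquad \text{where } s_i=0 \text{ if } i\in\mathcal{I}_P,\ s_i=1 \text{ if } i\in\mathcal{I}_R.
\]
Indeed, reading off the four cases of $a_{ij}=1$: if $\omega_n$ lies in block $\epsilon_n$ and $\xi_n=\overline{\omega_n}$ preserves orientation, then $\omega_{n+1}$ is forced into the same block as $\omega_n$; if $\xi_n$ reverses orientation, then $\omega_{n+1}$ is forced into the opposite block. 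This is exactly the displayed rule.

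Since the recursion is invertible in both directions --- given $\epsilon_n$ one recovers both $\epsilon_{n+1}$ and $\epsilon_{n-1}$ uniquely from $\xi$ --- the entire bi-infinite sequence $\epsilon$ is determined by the single value $\epsilon_0\in\{0,1\}$. Conversely, for either choice of $\epsilon_0$, the sequence $\omega$ built this way lies in $\Sigma_A$ by construction and satisfies $\pi(\omega)=\xi$. Hence $\pi^{-1}(\xi)$ contains exactly two elements for every $\xi\in\Sigma_N$, which simultaneously yields surjectivity and the two-to-one property.

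I do not expect any serious obstacle here: the only point requiring a little care is the bookkeeping of the four cases of $A$, and once they are rewritten as the single cocycle relation $\epsilon_{n+1}=\epsilon_n+s_{\xi_n}\pmod 2$, the rest is a short parametrization argument. This lemma is essentially a setup step that reinterprets the extended system as a $\mathbb{Z}/2$-extension of the original shift, keeping track of the accumulated orientation.
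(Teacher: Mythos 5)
Your proof is correct and follows essentially the same approach as the paper's: both proofs dispatch continuity and equivariance as immediate, and both establish the two-to-one property by observing that fixing the block membership of $\omega_0$ (your $\epsilon_0$) uniquely propagates the lift in both directions. Your recasting of the admissibility constraint as the mod-$2$ cocycle $\epsilon_{n+1}\equiv\epsilon_n+s_{\xi_n}$ is a cleaner packaging of what the paper does by the explicit two-sided recursive definition of $\omega_n$, and it makes the ``invertible in both directions'' point more transparent, but the underlying argument is the same.
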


\begin{proof}
It is not difficult to see that $\pi$ is continuous and $\pi \circ \sigma_A = \sigma \circ \pi$.
To show that $\pi$ is surjective and two-to-one, given $\xi \in \Sigma_N$, define the sequence $\omega$ by
\[\begin{split}
	\omega_0 :=\xi_0,\quad
    \omega_n &:=  \Big\{
                   \begin{array}{ll}
                     \xi_n, \, & \hbox{if}\,\, a_{\omega_{n-1}\xi_n} = 1 \\
                     \xi_n + N, \,& \hbox{if}\,\, a_{\omega_{n-1}(\xi_n+N)} = 1
                   \end{array}
     \quad\text{ for every }n\ge1          ,  \\
    \omega_n &:=  \Big\{
                   \begin{array}{ll}
                     \xi_n, \, & \hbox{if}\,\, a_{\xi_n\omega_{n+1}} = 1 \\
                     \xi_n + N, \,& \hbox{if}\,\, a_{(\xi_n+N)\omega_{n+1}} = 1
                   \end{array}
                 \quad\text{ for every }n\le1.
\end{split}\]
Note that $\omega\in\Sigma_A$ and $\pi(\omega) = \xi$, proving that $\pi$ is surjective.
Note that the sequence $\eta$ such that $\eta_0=\xi_0+N$ and which is defined elsewhere as above with $\eta_n$ in the place of $\omega_n$ for all $n\ne0$ also satisfies $\eta\in\Sigma_A$ and $\pi(\eta)=\xi$.
Finally, observe that by definition of $A$, those sequences above are the only ones having these properties, proving that $\pi$ is two-to-one.
\end{proof}

\begin{remark}{\rm
Note that $\pi$ provides only a semi-conjugation. Indeed, denoting
\begin{equation}\label{eq:defdeC}
    C := \ak \omega \in \Sigma_A \colon \omega_0 \in \{ 1,\ldots,N \} \fk,
\end{equation}
the proof of Lemma \ref{lem:conj} implies that $\pi|_C$ e $\pi|_{\Sigma_A \setminus C}$ are both bijections onto $\Sigma_N$, but it is not true that $\sigma_A(C) \subset C$ or $\sigma_A(\Sigma_A \setminus C) \subset \Sigma_A \setminus C$.
}\end{remark}

\begin{definition}\label{defdeG}
Consider the map 
\[
    R\colon I \to I \quad \colon \quad x \mapsto 1-x.
\]
Let $F$ be a step skew-product as introduced in Definition \ref{defdeF}. We define the \emph{extended step skew-product (with respect to F)} as being the map
\[
    G\colon \Sigma_A \times I \to \Sigma_A \times I \quad \colon \quad (\omega,x) \mapsto (\sigma_A(\omega), g_{\omega_0}(x)),
\]
where
\begin{equation}\label{eq:deffibermaps}\begin{split}
    g_i = f_i \quad \text{and} \quad g_{i+N} = R\circ f_i \circ R \quad
    &\text{for each}\,\, i \in \mathcal{I}_P,
\\
    g_i = R \circ f_i \quad \text{and} \quad g_{i+N} = f_i \circ R \quad
    &\text{for each}\,\, i \in \mathcal{I}_R.
\end{split}\end{equation}
\end{definition}

In the remainder of this section (except for a part of Section \ref{subsec:cg}), the step skew-product $F$ will be fixed and we always will consider $G$ being the extended step skew-product (with respect to $F$).

Consider the projection
\[
    \Pi \colon \Sigma_A \times I \to \Sigma_N \times I \quad \colon \quad (\omega,x) \mapsto \left\{
                                                                                        \begin{array}{ll}
                                                                                          (\overline{\omega},x), \, & \hbox{if}\,\,\omega \in C \\
                                                                                          (\overline{\omega},R(x)),\,& \hbox{if}\,\,\omega \in \Sigma_A \setminus C,
                                                                                        \end{array}
                                                                                       \right.
\]
where $C$ is as in \eqref{eq:defdeC}. Note that $\Pi|_{C \times I}$ and $\Pi|_{(\Sigma_A \setminus C) \times I}$ are bijections onto $\Sigma_N \times I$. Denote by $\Pi_1\colon\Sigma\times I\to\Sigma$ and $\Pi_2\colon\Sigma\times I\to I$ the canonical projections onto the first and second coordinate, respectively. Analogously, $\Pi_1^A$ and $\Pi_2^A$.
Compare the following diagram.
\[
\xymatrix@1{
& \Sigma_A\times I\ar@{->}[rr]^-{\quad\quad\quad G}\ar@{->}'[d][dd]
    && \Sigma_A\times I \ar@{->}[dd]^-{\Pi}
\\
\Sigma_A \ar@{<-}[ur]^-{\Pi_1^A}\ar@{->}[rr]^-{\quad\quad \sigma_A}\ar@{->}[dd] _-{\pi}
    &&\Sigma_A \ar@{<-}[ur]^-{\Pi_1^A}\ar@{->}[dd]
\\
& \Sigma_N\times I\ar@{->}'[r][rr]^-{F}
    && \Sigma_N\times I
\\
\Sigma_N\ar@{->}[rr]^-{\sigma_N}\ar@{<-}[ur]^-{\Pi_1}
    &&\Sigma_N \ar@{<-}[ur]^-{\Pi_1}
}
\]

\vspace{0.5cm}

\begin{lemma}\label{lem:conj2}
The projection $\Pi$ is a semi-conjugation two-to-one between $G$ and $F$.
\end{lemma}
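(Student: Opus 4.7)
The plan is to verify the four ingredients of a two-to-one semi-conjugacy: continuity of $\Pi$, surjectivity, the intertwining identity $\Pi \circ G = F \circ \Pi$, and that each fiber $\Pi^{-1}(\xi,p)$ has exactly two points. Throughout, the key structural fact to exploit is that the set $C$ from \eqref{eq:defdeC} is a clopen cylinder in $\Sigma_A$, so $\Pi$ is built by gluing two continuous maps on disjoint clopen pieces $C\times I$ and $(\Sigma_A\setminus C)\times I$; continuity is therefore immediate. Surjectivity and the two-to-one property reduce directly to Lemma~\ref{lem:conj}: given $(\xi,p)\in\Sigma_N\times I$, pull back $\xi$ under $\pi$ to obtain two sequences, say $\omega\in C$ and $\eta\in\Sigma_A\setminus C$ with $\overline\omega=\overline\eta=\xi$; then the two points $(\omega,p)$ and $(\eta,R(p))$ are easily checked to be the unique $\Pi$-preimages of $(\xi,p)$, since any preimage in $C\times I$ must have second coordinate $p$ while any preimage outside $C\times I$ must have second coordinate $R(p)$ (recall $R$ is an involution).

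The heart of the proof is the intertwining identity, which I would treat by case analysis according to whether $\omega\in C$ or not, combined with whether $f_{\overline{\omega_0}}$ preserves or reverses orientation. The definition of the transition matrix $A$ is tailored precisely so that, in each of the four cases, $\sigma_A(\omega)$ lands in $C$ or in $\Sigma_A\setminus C$ exactly according to what is needed for $\Pi$ to read off the correct second coordinate. For instance, if $\omega\in C$ and $\omega_0\in\mathcal I_R$, then by the definition of $A$ we have $\omega_1\in\{N+1,\dots,2N\}$, so $\sigma_A(\omega)\notin C$; applying $\Pi$ to $G(\omega,x)=(\sigma_A(\omega),R\circ f_{\omega_0}(x))$ gives second coordinate $R\circ R\circ f_{\omega_0}(x)=f_{\omega_0}(x)$, which matches $F(\overline\omega,x)$. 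The remaining three cases run in parallel, using the formulas in \eqref{eq:deffibermaps} for $g_i$ and $g_{i+N}$; each time the involution $R\circ R=\mathrm{id}$ cancels precisely the right number of copies of $R$.

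The only real nuisance, and what I would call the main obstacle, is organizing the bookkeeping of the four cases without mistakes: one has to keep straight simultaneously (a) where $\omega_0$ sits in $\{1,\dots,N\}$ versus $\{N+1,\dots,2N\}$, (b) whether $\overline{\omega_0}$ preserves or reverses orientation, and (c) consequently where $\omega_1$ sits, which determines which branch of $\Pi$ applies after iterating. Drawing this up as a small table with the four combinations, verifying the identity in each row, is the cleanest route. Once that diagram commutes on the nose, the lemma is proved. I would close by observing, as already noted in the remark after Lemma~\ref{lem:conj}, that $\Pi$ is genuinely only a semi-conjugacy (not a conjugacy), because $G$ shuffles $C\times I$ and $(\Sigma_A\setminus C)\times I$ whenever an orientation-reversing symbol is applied, so the two sheets of the covering are exchanged under iteration.
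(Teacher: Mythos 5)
Your proof is correct and follows essentially the same approach as the paper: continuity, surjectivity and the two-to-one property are all deduced from Lemma~\ref{lem:conj} and the definition of $\Pi$, and the intertwining identity $\Pi\circ G = F\circ\Pi$ is verified by a case analysis according to whether $\omega\in C$ and whether $\omega_0$ indexes an orientation-preserving or orientation-reversing map, with the structure of the transition matrix $A$ ensuring that $\sigma_A(\omega)$ lands in the correct sheet so that the copies of $R$ cancel. The only cosmetic difference is that you carry out the representative case $\omega\in C$, $\omega_0\in\mathcal I_R$ while the paper spells out $\omega\in C$, $\omega_0\in\mathcal I_P$, declaring the rest analogous in both cases.
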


\begin{proof}
Lemma \ref{lem:conj} implies that $\Pi$ is continuous, surjective and two-to-one.
It remains only to show that $\Pi\circ G = F \circ \Pi$.
Fix $(\omega,x) \in \Sigma_A \times I$.
We will only study the case $\omega \in C$ and $\omega \in \mathcal{I}_P$, the other cases are analogous and  omitted.
In this case $g_{\omega_0} = f_{\omega_0}$ and then
\[
    \Pi \circ G((\omega,x)) = \Pi (\sigma_A(\omega), f_{\omega_0}(x)).
\]
Furthermore,  $\sigma_A(\omega) \in C$ and by the above equality  and the definition of $\Pi$ we have
\[
    \Pi \circ G((\omega,x)) = (\overline{\sigma_A(\omega)}, f_{\omega_0}(x)).
\]
Using again Lemma \ref{lem:conj} and that $\omega \in C$, the last equality implies that
\[
    \Pi \circ G((\omega,x)) = (\sigma(\overline{\omega}), f_{\overline{\omega_0}}(x)) = F((\overline{\omega},x)) = F \circ \Pi((\omega,x))
\]
proving the lemma.
\end{proof}

\begin{remark}{\rm
It is an immediate consequence of Lemma \ref{lem:conj2} and \cite[Chapter IX, Theorem 1.8]{R} that the topological entropy of $F$ and $G$ coincide.
}\end{remark}

Recall the definition of $\Lambda$ being the maximal invariant set of $F$  in~\eqref{defdeLambda} and consider the analogously defined \emph{maximal invariant set} of $G$
\[
    \Gamma := \bigcap_{n \geq 0} G^n(\Sigma_A \times I).
\]
Note that we can write
\[
    \Gamma = \bigcup_{\omega \in \Sigma_A} \ak \omega \fk \times I_\omega,
    \quad\text{ where }\quad
    I_\omega := \bigcap_{n \geq 1} g_{\omega_{-n}\ldots \omega_{-1}} (I).
\]
There is an analogous  decomposition also for $\Lambda$.

\begin{lemma}\label{lem:simatrator}
We have   $I_\omega = I_{\overline{\omega}}$ for each $\omega \in C$ and $I_\omega = R(I_{\overline{\omega}})$ for each $\omega \in \Sigma_A \setminus C$.
\end{lemma}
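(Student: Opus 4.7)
The approach is to leverage the semi-conjugacy $\Pi \circ G = F \circ \Pi$ from Lemma \ref{lem:conj2} and translate it into an identity at the level of the fiber maps. Setting $\phi_\omega := \mathrm{id}$ if $\omega \in C$ and $\phi_\omega := R$ otherwise, we have $\Pi(\omega,x) = (\overline{\omega}, \phi_\omega(x))$. The first step is to prove the one-step commutation
\[
    \phi_{\sigma_A(\omega)} \circ g_{\omega_0} = f_{\overline{\omega}_0} \circ \phi_\omega,
\]
or equivalently, since $R$ is an involution, $g_{\omega_0} = \phi_{\sigma_A(\omega)} \circ f_{\overline{\omega}_0} \circ \phi_\omega$. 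Formally this is a restatement of $\Pi \circ G = F \circ \Pi$ read on the fiber coordinate, but I would verify it by a short case analysis over the four combinations of $\omega \in C$ or $\omega \notin C$ with $\overline{\omega}_0 \in \mathcal{I}_P$ or $\overline{\omega}_0 \in \mathcal{I}_R$, using the explicit definitions of the $g_i$ in \eqref{eq:deffibermaps} and the transition rules of $A$ to determine whether $\sigma_A(\omega)$ belongs to $C$.

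Next I would iterate this identity backwards. Replacing $\omega$ by $\sigma_A^{-k}(\omega)$ gives
\[
    g_{\omega_{-k}} = \phi_{\sigma_A^{-k+1}(\omega)} \circ f_{\overline{\omega}_{-k}} \circ \phi_{\sigma_A^{-k}(\omega)},
\]
and composing $g_{\omega_{-n}\ldots\omega_{-1}} = g_{\omega_{-1}} \circ g_{\omega_{-2}} \circ \cdots \circ g_{\omega_{-n}}$ produces a telescoping product in which each interior pair $\phi_{\sigma_A^{-k}(\omega)} \circ \phi_{\sigma_A^{-k}(\omega)}$ collapses to the identity because $\phi$ is an involution. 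Only the outermost factors survive, yielding
\[
    g_{\omega_{-n}\ldots\omega_{-1}} = \phi_\omega \circ f_{\overline{\omega}_{-n}\ldots\overline{\omega}_{-1}} \circ \phi_{\sigma_A^{-n}(\omega)}.
\]
Applying this to $I$ and noting that $\phi_{\sigma_A^{-n}(\omega)}(I) = I$ (since both $R$ and $\mathrm{id}$ are bijections of $I$ onto itself) gives $g_{\omega_{-n}\ldots\omega_{-1}}(I) = \phi_\omega\bigl(f_{\overline{\omega}_{-n}\ldots\overline{\omega}_{-1}}(I)\bigr)$.

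Finally, intersecting over $n \geq 1$ and using that $\phi_\omega$ is a bijection of $I$ (so it commutes with intersections), we obtain $I_\omega = \phi_\omega(I_{\overline{\omega}})$, which unpacks to $I_\omega = I_{\overline{\omega}}$ when $\omega \in C$ and $I_\omega = R(I_{\overline{\omega}})$ otherwise. The main (and really the only) delicate step will be the bookkeeping in the one-step identity: specifically, confirming that the transition matrix $A$ forces $\sigma_A(\omega)$ to move in or out of $C$ in exactly the way needed to make the outer $R$'s land on the correct side of each $f_{\overline{\omega}_0}$ when the interior factors cancel.
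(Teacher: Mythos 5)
Your proof is correct and rests on the same key input as the paper's, namely the semi-conjugacy $\Pi\circ G=F\circ\Pi$ of Lemma~\ref{lem:conj2}, read off on the fiber coordinate. The paper pushes the intersection directly through $\Pi$, $\Pi_2^A$, $G^n$, and $F^n$ for $\omega\in C$ and notes the other case is analogous, whereas your involution $\phi_\omega$ together with the telescoping cancellation packages the same computation uniformly for both cases; the mathematical content is the same.
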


\begin{proof}
Fix $\omega \in C$. It suffices to show that $g_{\omega_{-1}}\circ \ldots \circ g_{\omega_{-n}}(I) = f_{\overline{\omega_{-1}}}\circ \ldots \circ f_{\overline{\omega_{-n}}}(I)$ for all $n \geq 1$. Note first that from $\omega\in C$  together with Lemma \ref{lem:conj2} for any $n\ge1$ we obtain \[\begin{split}
    g_{\omega_{-1}}\circ \ldots \circ g_{\omega_{-n}}(I)
    &= \Pi_2^A (G^n(\ak \sigma^{-n}(\omega) \fk \times I))\\
    &= \Pi_2^A ((\Pi|_{C \times I})^{-1} (F^n(\Pi(\ak \sigma^{-n}(\omega) \fk \times I)))).
\end{split}\]
Using that $R(I) = I$, the definitions of $\Pi$, $F$,  $(\Pi|_{C \times I})^{-1}$, and $\Pi_2^A$ then imply
\[
\begin{split}
    g_{\omega_{-1}}\circ \ldots \circ g_{\omega_{-n}}(I)
    &= \Pi_2^A ((\Pi|_{C \times I})^{-1} (F^n (\ak \overline{\sigma^{-n}({\omega})}\fk \times I)))\\
    &= \Pi_2^A ((\Pi|_{C \times I})^{-1} (\ak \overline{\omega}\fk \times (f_{\overline{\omega_{-1}}}\circ \ldots \circ f_{\overline{\omega_{-n}}}(I))))\\
    &= \Pi_2^A (\ak \omega \fk \times (f_{\overline{\omega_{-1}}}\circ \ldots \circ f_{\overline{\omega_{-n}}}(I)))\\
    &= f_{\overline{\omega_{-1}}}\circ \ldots \circ f_{\overline{\omega_{-n}}}(I)
\end{split}
\]
proving the claimed property.
The case $\omega \in \Sigma_A \setminus C$ is analogous.
\end{proof}


\subsection{Strips, attractors and repellers}\label{subsec:far}

In this section we will define the concepts of attracting (repelling) (bi-)strips in a step skew-product. We will also show that attracting (repelling) strips associated to the extended step skew-product $G$ correspond to attracting (repelling) bi-strips associated to $F$.

From now on, we will write $\Sigma$ when referring to any of the spaces $\Sigma_A$ or $\Sigma_N$.

\begin{definition}\label{def:duplafaixa}
Given two functions $\varphi, \psi\colon \Sigma \to I$, we say that \emph{$\varphi$ is smaller than $\psi$} and   write $\varphi < \psi$ whenever
\[
    \varphi(\omega) < \psi(\omega) \quad \text{for all}\,\, \omega \in \Sigma.
\]
If $\varphi < \psi$, we define \emph{the strip between the graphs of $\varphi$ and $\psi$} by
\[
    S_{\varphi,\psi} := \ak (\omega, x)  \in \Sigma \times I \colon \varphi(\omega) \leq x \leq \psi(\omega) \fk.
\]
A strip $S_{\varphi,\psi}$ is \emph{attracting} with respect to a step skew-product $H \colon \Sigma \times I \to \Sigma \times I$ if
\[
    H(S_{\varphi,\psi}) \subset \ine(S_{\varphi,\psi}),
\]
and \emph{repelling} with respect to $H$  (provided the inverse  of $H$ is defined) if
\[
    H^{-1}(S_{\varphi,\psi}) \subset \ine(S_{\varphi,\psi}).
\]
A subset $S \subset \Sigma \times I$ is said a \emph{bi-strip} if $S = S_{\varphi_1,\psi_1} \cup S_{\varphi_2,\psi_2}$, where $S_{\varphi_j,\psi_j}$ are strips between the graphs of $\varphi_j$ and $\psi_j$ for $j=1,2$. Analogously, we say that a bi-strip is \emph{attracting} (\emph{repelling}) with respect to $H$ if it satisfies the above properties.
\end{definition}

Note that in the definition of bi-strip we do not require the condition $\psi_1 < \varphi_2$ or $\psi_2 < \varphi_1$. Although, the systems we study will have one of these properties.

There is the following relation between strips of  $G$ and   bi-strips of $F$.

\begin{lemma}\label{projdasfaixas}
If $S$ is an attracting strip with respect to  $G$ then $\Pi(S)$ is an attracting bi-strip with respect to $F$. Analogously, if $S$ is a repelling bi-strip with respect to $G$ then $F^{-1}$ is well defined in $\Pi(S)$ and $\Pi(S)$ is a repelling bi-strip with respect to $F$.
\end{lemma}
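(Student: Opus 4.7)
The plan is to exploit two features of $\Pi$: the semi-conjugation $\Pi\circ G=F\circ\Pi$ from Lemma~\ref{lem:conj2}, and the observation that the set $C$ of~\eqref{eq:defdeC} and its complement are clopen in $\Sigma_A$, so that the restrictions of $\Pi$ to the compact subsets $C\times I$ and $(\Sigma_A\setminus C)\times I$ are continuous bijections onto the Hausdorff space $\Sigma_N\times I$, hence homeomorphisms; consequently $\Pi$ is an open map.

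To see that $\Pi(S)$ is a bi-strip, I would write $S=(S\cap(C\times I))\cup(S\cap((\Sigma_A\setminus C)\times I))$ and use the explicit form of $\Pi$ (the identity on the fiber over $C$, and $R$ on the fiber over $\Sigma_A\setminus C$). The first piece projects to the strip in $\Sigma_N\times I$ between the graphs of $\varphi\circ(\pi|_C)^{-1}$ and $\psi\circ(\pi|_C)^{-1}$, and the second to the strip between the graphs of $1-\psi\circ(\pi|_{\Sigma_A\setminus C})^{-1}$ and $1-\varphi\circ(\pi|_{\Sigma_A\setminus C})^{-1}$, both genuine strips since $R$ reverses order and $\varphi<\psi$. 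For the attracting case, the semi-conjugation and openness of $\Pi$ at once give
\[
    F(\Pi(S))=\Pi(G(S))\subset\Pi(\ine S)\subset\ine\Pi(S),
\]
which is exactly the attracting condition for $\Pi(S)$ with respect to $F$.

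For the repelling case, I would introduce the involution $\tau\colon\Sigma_A\times I\to\Sigma_A\times I$ that exchanges the two $\Pi$-preimages of each point; by construction $\Pi\circ\tau=\Pi$ and hence $\Pi^{-1}(\Pi(S))=S\cup\tau(S)$. The crucial fact is that $\tau$ commutes with $G$; this is essentially encoded in~\eqref{eq:deffibermaps}, since a direct computation yields $g_{i+N}=R\circ g_i\circ R$ for every $i\in\{1,\ldots,N\}$, and combined with how $\tau$ acts on the sequence coordinate (as dictated by the admissibility matrix $A$) this forces $G\circ\tau=\tau\circ G$. Unwinding preimages then yields
\[
    F^{-1}(\Pi(S))=\Pi\bigl(G^{-1}(S)\cup G^{-1}(\tau S)\bigr)=\Pi\bigl(G^{-1}(S)\cup\tau(G^{-1}(S))\bigr)=\Pi(G^{-1}(S)),
\]
and the hypothesis $G^{-1}(S)\subset\ine S$ together with openness of $\Pi$ give $F^{-1}(\Pi(S))\subset\ine\Pi(S)$. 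Well-definedness of $F^{-1}$ on $\Pi(S)$ follows from the analogous property for $G^{-1}$ on $S$ via the surjectivity of $\Pi$. The main technical obstacle I expect is the verification of the commutation $G\circ\tau=\tau\circ G$, since it requires checking the identity in the four cases according to whether $\omega\in C$ and whether $\omega_0\in\mathcal{I}_P$ or $\mathcal{I}_R$, keeping track of how $\tau$ shifts the sequence coordinate; once this bookkeeping is in place, the rest of the proof is a formal manipulation using the semi-conjugation.
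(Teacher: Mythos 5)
Your proposal is correct, and for the attracting case and the well-definedness of $F^{-1}$ it follows essentially the same route as the paper: use the semi-conjugation $\Pi\circ G=F\circ\Pi$ from Lemma~\ref{lem:conj2} together with the fact that $\Pi$ maps interiors into interiors (the paper derives~\eqref{eq:antigaaf2} from an explicit formula for $\Pi$ on product balls, which is the same information as your openness observation). You also spell out the bi-strip decomposition of $\Pi(S)$ over $C$ and over $\Sigma_A\setminus C$, which the paper leaves implicit.

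Where you genuinely improve on the paper is the repelling case. The paper dismisses it as ``analogous to the previous case,'' but the analogy is not formal: because $\Pi$ is two-to-one, $\Pi^{-1}(\Pi(S))=S\cup\tau(S)\ne S$ in general, so the identity $F^{-1}(\Pi(S))=\Pi(G^{-1}(S))$ does not follow from the semi-conjugation by itself the way $F(\Pi(S))=\Pi(G(S))$ does. Your introduction of the involution $\tau$ (swap $i\leftrightarrow i+N$ on the symbolic coordinate, apply $R$ on the fiber) and the verification that $G\circ\tau=\tau\circ G$ — which indeed reduces, via $g_{i+N}=R\circ g_i\circ R$ from~\eqref{eq:deffibermaps} and the $s$-invariance of the admissibility matrix $A$, to a finite case check — is exactly the missing bridge. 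With it, $F^{-1}(\Pi(S))=\Pi\bigl(G^{-1}(S\cup\tau S)\bigr)=\Pi\bigl(G^{-1}(S)\cup\tau(G^{-1}(S))\bigr)=\Pi(G^{-1}(S))$ and the repelling estimate follows as you state. (An equivalent argument notes that $\tau(S)$ is itself repelling by the commutation, so $G^{-1}(S\cup\tau S)\subset\ine(S\cup\tau S)$; either formulation buys the same thing.) In short: same framework, but you make explicit the one place where the paper's ``analogous'' hides a real, if minor, asymmetry between the forward and backward directions.
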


\begin{proof}

Let $S$ be a strip in $\Sigma_A \times I$. Using that
\[
     \Pi(B_\varepsilon(\omega) \times B_\varepsilon(x)) = \left\{
                                                     \begin{array}{ll}
                                                      B_\varepsilon(\overline{\omega}) \times B_\varepsilon(x), \, & \hbox{if}\,\, (\omega,x) \in C \times I \\
                                                      B_\varepsilon(\overline{\omega}) \times R(B_\varepsilon(x)), \, & \hbox{if}\,\, (\omega,x) \in (\Sigma_A \setminus C) \times I
                                                      \end{array}
                                                     \right.
\]
for each $0 < \varepsilon < 1$, one can show that
\begin{equation}\label{eq:antigaaf2}
    \Pi(\ine(S)) \subset \ine (\Pi(S)).
\end{equation}
Suppose $S$ is attracting with respect to $G$. Then, by Lemma \ref{lem:conj2} and \eqref{eq:antigaaf2},
\[
    F(\Pi(S)) = \Pi(G(S)) \subset \Pi(\ine(S)) \subset \ine(\Pi(S)).
\]
Suppose now $S$ is repelling with respect to $G$. Fix $(\xi,z) \in \Pi(S)$ and $(\omega,x) \in S$ such that $(\xi,z) = \Pi((\omega,x))$.
As $S$ is repelling, we have $(\omega,x) \in G(\Sigma_A \times I)$ and
\[
    (\xi,z) = \Pi((\omega,x)) \in \Pi(G(\Sigma_A \times I)) = F(\Pi(\Sigma_A \times I)) = F(\Sigma_N \times I).
\]
This shows that $F^{-1}$ is well defined in $\Pi(S)$. The proof that $\Pi(S)$ is repelling is analogous to the previous case.
\end{proof}

Let $S \subset \Sigma \times I$ be a (bi-)strip. If $S$ is attracting with respect to a step skew-product $H$, we define the \emph{maximal attractor of $S$} as the set
\[
    A_{\max}(S) := \displaystyle\bigcap_{n\geq 0} H^n(\Sigma \times I).
\]
Analogously, if $S \subset \Sigma \times I$ is repelling with respect to $H$, we define the \emph{maximal repeller of $S$} as the maximal attractor of $S$ with respect to $H^{-1}$.

\begin{lemma}\label{lem:relentreatratores}
If $S$ is an attracting (repelling) strip with respect to $G$ and $B$ is its maximal attractor (repeller), then $\Pi(B)$ is the maximal attractor (repeller) of the bi-strip $\Pi(S)$ (attracting (repelling) with respect to F).
\end{lemma}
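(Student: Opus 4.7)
The plan is to push the intersection defining the maximal attractor through the projection $\Pi$ and then use the semi-conjugation from Lemma~\ref{lem:conj2} to identify the result with the maximal attractor of $\Pi(S)$.

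First, I would apply Lemma~\ref{lem:conj2} iteratively to obtain $\Pi\circ G^n=F^n\circ\Pi$ for every $n\ge0$. In particular,
\[
	\Pi(G^n(S))=F^n(\Pi(S))\quad\text{for every }n\ge0,
\]
so the projection of each forward iterate of $S$ under $G$ matches the corresponding forward iterate of $\Pi(S)$ under $F$. By Lemma~\ref{projdasfaixas}, $\Pi(S)$ is an attracting bi-strip for $F$, so $F^n(\Pi(S))$ is a decreasing sequence of compact sets whose intersection is, by definition, the maximal attractor of $\Pi(S)$.

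Second, I would verify that the projection commutes with the intersection, that is,
\[
	\Pi\Big(\bigcap_{n\ge0} G^n(S)\Big)=\bigcap_{n\ge0}\Pi(G^n(S)).
\]
The inclusion ``$\subset$'' is immediate. For the reverse inclusion, given $y$ in the right-hand side, choose $x_n\in G^n(S)$ with $\Pi(x_n)=y$. Since $S$ is attracting, $G^{n+1}(S)\subset\ine(G^n(S))\subset G^n(S)$, so the sets $G^n(S)$ are compact and nested. By compactness of $\Sigma_A\times I$, a subsequence $x_{n_k}$ converges to some $x$, and for each fixed $m$ all but finitely many $x_{n_k}$ lie in the closed set $G^m(S)$, whence $x\in G^m(S)$ for every $m$; hence $x\in B$ and $\Pi(x)=y$ by continuity of $\Pi$. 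Combining the two displayed equalities yields $\Pi(B)=\bigcap_{n\ge0}F^n(\Pi(S))$, which is precisely the maximal attractor of the bi-strip $\Pi(S)$.

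For the repelling case I would argue in the same way after two preliminary observations: Lemma~\ref{projdasfaixas} guarantees that $F^{-1}$ is well-defined on $\Pi(S)$, and the semi-conjugation from Lemma~\ref{lem:conj2} passes to inverses on the appropriate domains, giving $\Pi\circ G^{-1}=F^{-1}\circ\Pi$ there. With these in hand, the identity $\Pi(G^{-n}(S))=F^{-n}(\Pi(S))$ and the compactness/nestedness of the sets $G^{-n}(S)$ (consequence of $G^{-1}(S)\subset\ine(S)$) let us repeat the argument verbatim with $G^{-1}$ and $F^{-1}$ in place of $G$ and $F$.

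The only mildly delicate step is the second one, the exchange of $\Pi$ with the infinite intersection, but it is standard once the compactness and nestedness are in place; the finite-to-one nature of $\Pi$ guaranteed by Lemma~\ref{lem:conj} is not even needed here. No other obstacles are expected.
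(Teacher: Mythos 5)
Your proof is correct and takes essentially the same route as the paper's: both use the nestedness and compactness of the sets $G^n(S)$ to commute $\Pi$ with the infinite intersection, and both invoke the semi-conjugation of Lemma~\ref{lem:conj2} to identify $\Pi(G^n(S))$ with $F^n(\Pi(S))$. The paper simply leaves the compactness/subsequence argument implicit, whereas you spell it out.
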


\begin{proof}
Suppose $S$ is attracting with respect to $G$. Using that $\{ G^n(S) \}_{n \geq 0}$ is a nested collection of closed strips, the compactness of $\Sigma_A \times I$ and the continuity of $\Pi$ imply
\[
    \bigcap_{n \geq 0} \Pi(G^n(S)) = \Pi\Big(\bigcap_{n \geq 0} (G^n(S))\Big).
\]
Combining Lemma \ref{lem:conj2} with the last equality, we obtain that
\[
    \bigcap_{n \geq 0} F^n(\Pi(S)) = \bigcap_{n \geq 0} \Pi(G^n(S)) = \Pi\Big(\bigcap_{n \geq 0} (G^n(S))\Big) = \Pi(B).
\]
This proves that $\Pi(B)$ is the maximal attractor of $\Pi(S)$. The proof in the case $S$ repelling is analogous.
\end{proof}


\subsection{Extended Markov measures}\label{subsec:me}

In this section we will compare measures in the spaces $\Sigma_A$ and $\Sigma_N$. We will also compare the existence of (bi-)bony graphs in the spaces $\Sigma_A \times I$ and $\Sigma_N \times I$. Finally, we will show that a physical hyperbolic measure with respect to the extended step skew-product $G$ projects to a physical hyperbolic measure with respect to $F$.

For each $n \geq m \in \mathbb{Z}$ and $\omega_m\ldots\omega_n$ finite sequence, we will denote the \emph{cylinder associated to $\omega_m\ldots\omega_n$} by
\begin{equation}\label{eq:cilindro}
    [m;\omega_m\ldots\omega_n] := \ak \eta \in \Sigma \colon \eta_i = \omega_i \,\,\text{for all} \,\,i = m,\ldots,n \fk.
\end{equation}
Given a Markov measure in $\lambda$ in $\Sigma_A$ (see, for example, \cite[Chapter 1]{W}), we will denote by $(p_1,p_2,\ldots,p_{2N})$ the probability vector  and by $(P_{ij})_{i,j=1}^{2N}$ the stochastic matrix associated to $\lambda$.

\begin{definition}\label{def:medsimetrica}
A Markov measure $\lambda$ in $\Sigma_A$ is \emph{symmetric} if $p_i = p_{i+N}$, $P_{ij} = P_{(i+N)(j+N)}$ and $P_{i(j+N)} = P_{(i+N)j}$
for all $i,j = 1,\ldots,N$.
\end{definition}

Note that by definition of $A$, if $\lambda$ is a Markov measure in $\Sigma_A$ then for each $i,j = 1,\ldots,N$, we have $P_{ij} = 0$ or $P_{i(j+N)} = 0$.

\begin{definition}\label{defdelambdabarra}
Given a measure $\lambda$  in $\Sigma_A$, we denote
\[
\overline{\lambda} := \pi_* \lambda
\]
and call $\overline\lambda$ the \emph{projection} of $\lambda$ and  $\lambda$ an \emph{extension} of $\overline\lambda$. Given a Markov measure $\lambda_0$ in $\Sigma_N$ we call an extension of it $\lambda$ a \emph{symmetric extension} if  $\lambda_0=\pi_\ast\lambda$  and $\lambda$ is symmetric.
\end{definition}

The following result is immediate by construction.

\begin{lemma}\label{lem:unisymext}
	Every Markov measure $\lambda_0$ in $\Sigma_N$ has a unique symmetric extension.
\end{lemma}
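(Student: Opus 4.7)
The plan is to write down an explicit candidate for the symmetric extension in terms of the data of $\lambda_0$, verify that it works, and then show that the symmetry conditions together with $\pi_\ast\lambda=\lambda_0$ force exactly this choice.

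Let $(q_1,\ldots,q_N)$ and $(Q_{ij})_{i,j=1}^N$ be the probability vector and stochastic matrix of $\lambda_0$. For the candidate $\lambda$, the symmetry condition $p_i=p_{i+N}$ together with $\pi_\ast\lambda=\lambda_0$ forces $p_i+p_{i+N}=q_i$, hence $p_i=p_{i+N}=q_i/2$. For the stochastic matrix, I would distinguish the two cases given by the definition of the transition matrix $A$: if $i\in\mathcal{I}_P$, then only the transitions $i\to j$ and $(i+N)\to(j+N)$ are admissible for $j\in\{1,\ldots,N\}$, and I set
\[
    P_{ij}=P_{(i+N)(j+N)}:=Q_{ij},\qquad P_{i(j+N)}=P_{(i+N)j}:=0;
\]
if $i\in\mathcal{I}_R$, then only $i\to(j+N)$ and $(i+N)\to j$ are admissible, and I set
\[
    P_{i(j+N)}=P_{(i+N)j}:=Q_{ij},\qquad P_{ij}=P_{(i+N)(j+N)}:=0.
\]
By construction this is a symmetric Markov measure on $\Sigma_A$ (each row sums to $1$ because $\sum_j Q_{ij}=1$, and $(p_1,\ldots,p_{2N})$ is stationary for $(P_{ij})$ since $(q_1,\ldots,q_N)$ is stationary for $(Q_{ij})$).

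Next, I would verify $\pi_\ast\lambda=\lambda_0$ on cylinders, which suffices by the Kolmogorov extension theorem. For a cylinder $[m;\xi_m\ldots\xi_n]\subset\Sigma_N$, the preimage under $\pi$ is the disjoint union of $2^{n-m+1}$ cylinders in $\Sigma_A$, but the admissibility encoded by $A$ leaves at most two of these with nonzero $\lambda$-measure, corresponding to the two starting symbols $\xi_m$ or $\xi_m+N$ (cf.\ the proof of Lemma~\ref{lem:conj}). A direct computation using the formulas above, and the fact that composition of orientation reversals alternates between $\mathcal{I}_P$-type and $\mathcal{I}_R$-type transitions, yields
\[
    \lambda\bigl(\pi^{-1}([m;\xi_m\ldots\xi_n])\bigr)=2\cdot\tfrac{q_{\xi_m}}{2}\cdot Q_{\xi_m\xi_{m+1}}\cdots Q_{\xi_{n-1}\xi_n}=\lambda_0([m;\xi_m\ldots\xi_n]).
\]

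For uniqueness, I would reverse the reasoning. Any symmetric extension $\lambda$ must satisfy $p_i+p_{i+N}=q_i$ and $p_i=p_{i+N}$, giving the probability vector. Given $i,j\in\{1,\ldots,N\}$, the equality $\lambda(\pi^{-1}([0;ij]))=\lambda_0([0;ij])$ together with the fact that $A$ admits exactly two of the four transitions between $\{i,i+N\}$ and $\{j,j+N\}$ and the symmetry identities $P_{ij}=P_{(i+N)(j+N)}$, $P_{i(j+N)}=P_{(i+N)j}$ determine the nonzero entries uniquely as $Q_{ij}$. Since a Markov measure is determined by its probability vector and stochastic matrix, this proves uniqueness.

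No step looks genuinely hard; the only place that requires care is matching the combinatorics of $A$-admissibility with the symmetry identities, so that the two cases $i\in\mathcal{I}_P$ and $i\in\mathcal{I}_R$ are both handled correctly and the pullback measure of a long cylinder collapses to the expected product of $Q_{\xi_k\xi_{k+1}}$'s.
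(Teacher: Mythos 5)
The paper states this result as ``immediate by construction'' and gives no explicit proof, so the relevant comparison is whether your construction is the intended one and whether it is carried out correctly. It is: your explicit formulas $p_i=p_{i+N}=q_i/2$ and $P_{ij}$ or $P_{i(j+N)}$ equal to $Q_{ij}$ depending on whether $i\in\mathcal I_P$ or $i\in\mathcal I_R$ (with the mirror entries determined by symmetry and the remaining ones forced to vanish by $A$-admissibility) are exactly what the definitions of $A$ and of symmetric measure impose, and the verification on cylinders and the uniqueness step are both sound. The one point worth making explicit, which you gesture at via ``collapses to the expected product,'' is that for a cylinder word $\xi_m\ldots\xi_n$ in $\Sigma_N$ there are exactly two $A$-admissible lifts (determined by the choice of $\omega_m\in\{\xi_m,\xi_m+N\}$, after which each subsequent symbol is forced), and both have the same $\lambda$-measure $\tfrac{q_{\xi_m}}{2}\prod_k Q_{\xi_k\xi_{k+1}}$; that is the content behind the factor $2$ in your displayed computation.
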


We have the following correspondence, which in particular applies to Markov measures in $\Sigma_N$ and symmetric Markov measures in $\Sigma_A$.

\begin{lemma}\label{lem:af3teo}
We have $\pi\circ\Pi_1^A=\Pi_1\circ\Pi$. Moreover, if  $\lambda_0$ is a Borel probability in $\Sigma_N$, $\lambda$ an extension of $\lambda_0$ to $\Sigma_A$, and $\mu$ an extension of $\lambda$ to $\Sigma_A\times I$ which is ergodic $G$-invariant, then its projection $\Pi_*\mu$ to $\Sigma_N\times I$ is ergodic $F$-invariant  and we have $\lambda_0 = ({\Pi_1})_*(\Pi_*\mu)$.
\end{lemma}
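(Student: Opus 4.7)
The plan is to verify the three claims in turn, all of which reduce to combining the semi-conjugation already established in Lemma \ref{lem:conj2} with standard push-forward identities, so no serious obstacle is expected.

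First, I would prove the commutation $\pi \circ \Pi_1^A = \Pi_1 \circ \Pi$ by a pointwise check. Given $(\omega, x) \in \Sigma_A \times I$, the left-hand side yields $\pi(\omega) = \overline\omega$. By the definition of $\Pi$, the first coordinate of $\Pi(\omega, x)$ equals $\overline\omega$ regardless of whether $\omega \in C$ or $\omega \in \Sigma_A \setminus C$ (only the second coordinate is altered by $R$ in the latter case), so $\Pi_1 \circ \Pi(\omega,x)=\overline\omega$ as well.

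Next, to establish that $\Pi_*\mu$ is $F$-invariant, I would chain the push-forwards using the semi-conjugation $\Pi \circ G = F \circ \Pi$ from Lemma \ref{lem:conj2}:
\[
F_*(\Pi_*\mu) = (F \circ \Pi)_*\mu = (\Pi \circ G)_*\mu = \Pi_*(G_*\mu) = \Pi_*\mu.
\]
For ergodicity, I would take an $F$-invariant Borel set $B \subset \Sigma_N \times I$ and observe that $\Pi^{-1}(B)$ is then $G$-invariant, because
\[
G^{-1}(\Pi^{-1}(B)) = (\Pi \circ G)^{-1}(B) = (F \circ \Pi)^{-1}(B) = \Pi^{-1}(B).
\]
Ergodicity of $\mu$ then forces $\mu(\Pi^{-1}(B)) \in \{0,1\}$, and since $\Pi_*\mu(B) = \mu(\Pi^{-1}(B))$, ergodicity of $\Pi_*\mu$ follows.

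Finally, I would chain projections via the first identity together with the hypotheses $(\Pi_1^A)_*\mu = \lambda$ (that $\mu$ extends $\lambda$) and $\pi_*\lambda = \lambda_0$ (that $\lambda$ extends $\lambda_0$, cf.\ Definition \ref{defdelambdabarra}):
\[
(\Pi_1)_*(\Pi_*\mu) = (\Pi_1 \circ \Pi)_*\mu = (\pi \circ \Pi_1^A)_*\mu = \pi_*\bigl((\Pi_1^A)_*\mu\bigr) = \pi_*\lambda = \lambda_0.
\]
The whole argument is essentially a diagram chase in the commutative diagram of Section \ref{subsec:ESSP}; both invariance/ergodicity and the marginal identity are formal consequences of Lemma \ref{lem:conj2} once the commutation of first-coordinate projections is in place.
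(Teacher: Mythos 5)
Your argument is correct and is exactly the formal diagram-chase one would expect: the commutation of first-coordinate projections is a pointwise check, $F$-invariance and ergodicity of $\Pi_*\mu$ follow from $\Pi\circ G = F\circ\Pi$ (Lemma~\ref{lem:conj2}) together with the push-forward and preimage identities, and the marginal identity chains the two extension hypotheses through $\Pi_1\circ\Pi=\pi\circ\Pi_1^A$. The paper states Lemma~\ref{lem:af3teo} without proof, treating it as immediate from the construction, so your write-up supplies the (correct) details the authors omitted; there is nothing to contrast.
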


We  now  compare  continuous (bi-)bony graphs (recall Definition \ref{def:bigraficoossudo}) in the spaces $\Sigma_A \times I$ and $\Sigma_N \times I$. Note that in the definition of a (bi-)bony graph we do not require that its projection onto $\Sigma$ was this whole set. However, we will work with (bi-)bony graphs satisfying this property. As in the case of strips (Lemma~\ref{projdasfaixas}), 
(continuous) bony graphs in $\Sigma_A \times I$ correspond to (continuous) bi-bony graphs in $\Sigma_N \times I$.

\begin{lemma}\label{lem:duplobg}
Let $\lambda$ be a symmetric Markov measure in $\Sigma_A$. If $B$ is a bony graph with respect to $\lambda$ then $\Pi(B)$ is a bi-bony graph with respect to $\overline{\lambda}=\pi_*\lambda$.
Moreover, if $B$ is continuous then $\Pi(B)$ is, too.
\end{lemma}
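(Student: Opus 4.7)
The plan is to split $B$ along the partition $\Sigma_A = C \cup (\Sigma_A\setminus C)$, project each piece by $\Pi$, and show that each projection is a bony graph with respect to $\overline{\lambda}$. Set $C_1 := C$, $C_2 := \Sigma_A\setminus C$, and $B^j := B \cap (C_j\times I)$, so that $\Pi(B) = \Pi(B^1)\cup \Pi(B^2)$. By Lemma~\ref{lem:conj}, each $\pi|_{C_j}\colon C_j\to\Sigma_N$ is a continuous bijection; because $C_j$ is clopen in the compact space $\Sigma_A$, it is a homeomorphism, and consequently $\Pi|_{C_j\times I}$ is a homeomorphism onto $\Sigma_N\times I$. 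In particular, the fibers of $\Pi(B^j)$ are given by $\Pi(B^1)_\xi = B_{\omega^1(\xi)}$ and $\Pi(B^2)_\xi = R(B_{\omega^2(\xi)})$, where $\omega^j(\xi) := (\pi|_{C_j})^{-1}(\xi)$.

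The decisive step is to deduce that for each $j$ the pushforward $(\pi|_{C_j})_\ast(\lambda|_{C_j})$ is absolutely continuous with respect to $\overline{\lambda}$ (in fact, equal to $\frac{1}{2}\overline{\lambda}$). To this end I would introduce the involution $T\colon \Sigma_A\to\Sigma_A$ that sends $\omega$ to the other $\pi$-preimage of $\pi(\omega)$, so that $T$ swaps $C_1$ with $C_2$ and satisfies $\pi\circ T=\pi$. Using the algorithm in the proof of Lemma~\ref{lem:conj}, one checks inductively that at each coordinate $n$ the symbols $T(\omega)_n$ and $\omega_n$ lie in opposite halves of $\{1,\ldots,2N\}$ but have the same reduction modulo $N$. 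A short cylinder-by-cylinder calculation then shows that the three symmetry relations $p_i=p_{i+N}$, $P_{ij}=P_{(i+N)(j+N)}$, and $P_{i(j+N)}=P_{(i+N)j}$ are precisely what is needed to make $T$ preserve $\lambda$, yielding $\lambda|_{C_1}= T_\ast(\lambda|_{C_2})$ and hence the desired equality of the two pushforwards.

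Once this is in hand, let $N_0 := \ak \omega \in \Sigma_A \colon B_\omega \text{ is not a single point}\fk$, which is $\lambda$-null by hypothesis. Then $\pi(N_0\cap C_j)$ is $\overline{\lambda}$-null for $j=1,2$, and for every $\xi\in\Sigma_N\setminus\pi(N_0\cap C_j)$ the fiber $\Pi(B^j)_\xi$ is the image of the single point $B_{\omega^j(\xi)}$ under either the identity or $R$, hence a single point. At the remaining $\xi$ the fiber is an interval, since $R$ maps intervals to intervals. This proves each $\Pi(B^j)$ is a bony graph relative to $\overline{\lambda}$, and therefore their union $\Pi(B)$ is a bi-bony graph.

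For the continuity claim, fix $j\in\{1,2\}$, a point $\xi\in\Pi_1(\Pi(B^j))$ and $\varepsilon>0$. Pull $\xi$ back to $\omega^j(\xi)\in C_j$ via the homeomorphism $\pi|_{C_j}$; continuity of $B$ at $\omega^j(\xi)$ (combined with uniform continuity of $R$ on $I$ when $j=2$) produces a $\delta_A>0$ controlling the fibers of $B$ over neighbors in $C_j$, which transfers to a $\delta>0$ on $\Sigma_N$ through $(\pi|_{C_j})^{-1}$. The main obstacle in the whole argument is the measure-theoretic transfer carried out in the second paragraph: without the symmetry of $\lambda$, a $\lambda$-null set of ``bad'' fibers in $\Sigma_A$ could in principle project onto a set of positive $\overline{\lambda}$-measure in $\Sigma_N$, and the three conditions defining symmetry are tailored exactly to rule this out.
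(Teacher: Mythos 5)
Your proof is correct and follows the same basic strategy as the paper: split $B$ over $C$ and $\Sigma_A\setminus C$, project each piece by $\Pi$, and show each projection is a bony graph with respect to $\overline{\lambda}$. The genuinely valuable addition in your write-up is the second paragraph. The paper dismisses the measure transfer with ``the latter is a consequence of the definition of $\overline\lambda$,'' but that alone is not enough: writing $N_C:=\{\omega\in C: B_\omega \text{ is not a singleton}\}$, one has $\overline\lambda(\pi(N_C))=\lambda(\pi^{-1}(\pi(N_C)))=\lambda(N_C\cup T(N_C))$, and while $\lambda(N_C)=0$ is given, $T(N_C)$ lies in $\Sigma_A\setminus C$ and need not be a subset of the null set of bad fibers, so $\lambda(T(N_C))=0$ requires an argument. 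You supply exactly this by constructing the involution $T$ and checking that the three symmetry identities of Definition~\ref{def:medsimetrica} are precisely those making the cylinder measures $T$-invariant; since $T$ acts coordinate-wise by $s(i)=i\pm N$, this reduces to $p_{s(i)}=p_i$ and $P_{s(i)s(j)}=P_{ij}$, which follow directly from the definition. One small wording quibble: you say $(\pi|_{C_j})_*(\lambda|_{C_j})$ is ``absolutely continuous with respect to $\overline\lambda$,'' but what is actually needed is the reverse direction; since you then assert equality with $\tfrac12\overline\lambda$, the two measures are mutually absolutely continuous and the conclusion you draw is correct. Your continuity argument via $\pi|_{C_j}$ being a homeomorphism matches what the paper leaves as ``immediate.''
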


\begin{proof}
Recalling the definition of $C$ in~\eqref{eq:defdeC}, consider the partition
\[
	\Pi(B) = \Pi|_{C \times I} (B) \cup \Pi|_{(\Sigma_A \setminus C) \times I}(B).
\]
To prove that $\Pi(B)$ is a bi-bony graph, it suffices to show that $\Pi|_{C \times I} (B)$ and $\Pi|_{(\Sigma_A \setminus C) \times I}(B)$ are two \emph{bony} graphs. By definition of $\Pi$, $(\Pi|_{C \times I} (B))_\xi$ is either a point or an interval for every sequence $\xi \in \Sigma_N$.
Note that
\[
    \pi^{-1}(\ak \xi \in \Sigma_N \colon (\Pi|_{C \times I} (B))_\xi \,\,\text{is an interval} \fk) = \ak \omega \in C \colon B_\omega \,\,\text{is an interval} \fk.
\]
Since, by hypothesis, $B$ is a bony graph, we have
\[
    \lambda (\ak \omega \in C \colon B_\omega \,\,\text{is an interval}) \fk = 0
    =   \overline{\lambda} (\ak \xi \in \Sigma_N \colon (\Pi|_{C \times I} (B))_\xi\text{ is an interval} \fk ),
\]
where the latter is a consequence of the definition of $\overline\lambda$.
This shows that $\Pi|_{C \times I} (B)$ is a bony graph. The proof that $\Pi|_{(\Sigma_A \setminus C) \times I} (B)$ is a bony graph is analogous.
The  statement about continuity  is immediate.
\end{proof}

Ending this section we will show that a physical hyperbolic measure with respect to the extended step skew-product $G$ projects to a physical hyperbolic measure with respect to $F$. Recall its definition.

\begin{definition}\label{def:medidafisica}
Let $H\colon \Sigma \times I \to \Sigma \times I$ be a step skew-product and $\lambda$ a Markov measure in $\Sigma$. An $H$-invariant measure $\mu$ is called a \emph{physical measure with respect to $H$ and $\lambda$} provided that $(\lambda \times \Leb)(V) > 0$, where $\Leb$ denotes the Lebesgue measure and where $V$ is the set of all points $(\xi, p) \in \Sigma \times I$ satisfying
\[
    \lim_{n \to \infty} \frac{1}{n} \displaystyle\sum_{i=0}^{n-1} \varphi(H^i(\xi,p)) = \int \varphi \,d\mu \quad\text{for all}\,\,\varphi \in C^0(\Sigma \times I).
\]
The set $V$ is called  \emph{basin of the measure $\mu$}.
\end{definition}

\begin{lemma}\label{lem:medfisica}
Let $\lambda$ be a symmetric Markov measure in $\Sigma_A$ and $\lambda_0=\pi_*\lambda$ its projection on $\Sigma_N$. If $\mu$ is a physical measure with respect to  $G$ and $\lambda$ then $\Pi_\ast\mu$ is a physical measure with respect to $F$ and $\lambda_0$. Moreover, if $V$ is the basin of $\mu$ then $\Pi(V)$ is contained in the basin of  $\Pi_\ast\mu$.
\end{lemma}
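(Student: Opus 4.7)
The plan is to transfer Birkhoff convergence from $G$-orbits to $F$-orbits using the semi-conjugation $\Pi$ of Lemma \ref{lem:conj2}, and to transfer the positive-measure property by showing that $\Pi$ carries $\lambda\times\Leb$ to $\lambda_0\times\Leb$.

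First I would check that $\Pi_\ast\mu$ is $F$-invariant: this is immediate from $\Pi\circ G=F\circ\Pi$ in Lemma \ref{lem:conj2} applied to the $G$-invariant measure $\mu$ (compare also Lemma \ref{lem:af3teo}). Then, given any test function $\varphi\in C^0(\Sigma_N\times I)$, the composition $\varphi\circ\Pi$ belongs to $C^0(\Sigma_A\times I)$. For any $(\omega,x)\in V$, using $\Pi\circ G^i=F^i\circ\Pi$, one has
\[
 \frac{1}{n}\sum_{i=0}^{n-1}\varphi\bigl(F^i(\Pi(\omega,x))\bigr)
 =\frac{1}{n}\sum_{i=0}^{n-1}(\varphi\circ\Pi)\bigl(G^i(\omega,x)\bigr)
 \xrightarrow[n\to\infty]{}\int\varphi\circ\Pi\,d\mu=\int\varphi\,d(\Pi_\ast\mu),
\]
so $\Pi(\omega,x)$ belongs to the basin $W$ of $\Pi_\ast\mu$ with respect to $F$. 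Hence $\Pi(V)\subset W$.

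The core step is to verify the pushforward identity $\Pi_\ast(\lambda\times\Leb)=\lambda_0\times\Leb$. I would split $\Sigma_A\times I=(C\times I)\sqcup((\Sigma_A\setminus C)\times I)$ and use that $\Pi|_{C\times I}(\omega,x)=(\overline\omega,x)$ and $\Pi|_{(\Sigma_A\setminus C)\times I}(\omega,x)=(\overline\omega,R(x))$, so that since $R_\ast\Leb=\Leb$ the claim reduces to
\[
 (\pi|_C)_\ast(\lambda|_C)\;=\;(\pi|_{\Sigma_A\setminus C})_\ast(\lambda|_{\Sigma_A\setminus C})\;=\;\tfrac12\,\lambda_0.
\]
This is where the symmetry of $\lambda$ (Definition \ref{def:medsimetrica}) enters: the relations $p_i=p_{i+N}$, $P_{ij}=P_{(i+N)(j+N)}$, and $P_{i(j+N)}=P_{(i+N)j}$, together with the structure of the transition matrix $A$ (which forces at most one of $P_{ij}$, $P_{i(j+N)}$ to be non-zero), make $\lambda$-mass of a cylinder in $C$ equal to that of its "twin" obtained by swapping each $i\leftrightarrow i+N$, and each pair sums to the $\lambda_0$-mass of the $\pi$-projected cylinder. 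A standard Carath\'eodory-type extension then yields the claim on Borel sets.

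Finally, since $\Pi$ is everywhere $2$-to-$1$ we have $\Pi^{-1}(\Pi(V))\supseteq V$, hence
\[
 (\lambda_0\times\Leb)(\Pi(V))
 =(\lambda\times\Leb)(\Pi^{-1}(\Pi(V)))
 \geq(\lambda\times\Leb)(V)>0.
\]
Combined with $\Pi(V)\subset W$, this gives $(\lambda_0\times\Leb)(W)>0$, proving that $\Pi_\ast\mu$ is physical with respect to $F$ and $\lambda_0$, and that $\Pi(V)$ is contained in its basin. The main obstacle is the careful bookkeeping in the pushforward identity; the topological content (Birkhoff averages transferring under $\Pi$) is essentially a formal consequence of the semi-conjugation.
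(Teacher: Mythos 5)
Your proof is correct and follows the paper's argument essentially verbatim: transfer Birkhoff averages along the semi-conjugation $\Pi\circ G=F\circ\Pi$, then use $\Pi_\ast(\lambda\times\Leb)=\lambda_0\times\Leb$ together with $\Pi^{-1}(\Pi(V))\supseteq V$ to get positive measure of the basin. One small remark: you invoke the symmetry of $\lambda$ to establish the pushforward identity, but symmetry is not actually needed there — since $\Leb(R(E))=\Leb(E)$ one has directly
\[
(\lambda\times\Leb)\bigl(\Pi^{-1}(D\times E)\bigr)=\lambda\bigl(\pi^{-1}(D)\cap C\bigr)\Leb(E)+\lambda\bigl(\pi^{-1}(D)\setminus C\bigr)\Leb(E)=\lambda_0(D)\Leb(E),
\]
which uses only $\pi_\ast\lambda=\lambda_0$; the stronger claim that each piece contributes $\tfrac12\lambda_0$ is true under symmetry but is an unnecessary detour.
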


\begin{proof}
Fix $(\xi,p) \in \Pi(V)$, $\varphi \in C^0(\Sigma_N \times I)$ and $(\omega,x) \in V$ such that $\Pi(\omega,x) = (\xi,p)$. By semiconjugation (Lema \ref{lem:conj2}) and continuity of $\varphi\circ \Pi$, we have
\[
    \lim_{n \to \infty} \frac{1}{n} \displaystyle\sum_{i=0}^{n-1} \varphi (F^i (\xi,p)) =\lim_{n \to \infty} \frac{1}{n} \displaystyle\sum_{i=0}^{n-1} (\varphi \circ \Pi) (G^i(\omega,x)) =\int \varphi \circ \Pi \,d\mu =\int \varphi \,d(\Pi_\ast\mu).
\]
Hence $\Pi(V)$ is contained in the basin of $\Pi_\ast\mu$. Now $\Pi_\ast(\lambda \times \Leb) = \lambda_0 \times \Leb$ implies
\[
    (\lambda_0 \times \Leb)(\Pi(V)) 
    = (\lambda \times \Leb)(\Pi^{-1}(\Pi(V))) \geq (\lambda \times \Leb) (V) > 0.
\]
This concludes the proof.
\end{proof}

Let us compare fiberwise Lyapunov exponents of points in $\Sigma_A \times I$ and $\Sigma_N \times I$.

\begin{lemma}\label{lem:expigual}
For every $(\omega,x) \in \Sigma_A \times I$ for which $\chi_+(\omega,x)$ (with respect to $F$) is well defined, we have $\chi_+(\omega,x) = \chi_+^G(\Pi(\omega,x))$, where $\chi_+^G$ denotes the fiberwise Lyapunov exponent with respect to $G$.
\end{lemma}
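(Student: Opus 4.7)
The plan is to extract the claim directly from the semi-conjugation $\Pi\circ G = F\circ\Pi$ (Lemma~\ref{lem:conj2}) by iterating it $n$ times and then taking derivatives, using the crucial fact that the reflection $R$ satisfies $\lvert R'\rvert\equiv 1$.

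Fix $(\omega,x)\in\Sigma_A\times I$ and set $(\xi,y):=\Pi(\omega,x)$, so that $\xi=\overline{\omega}$ and $y=\beta(x)$, where $\beta$ is the identity if $\omega\in C$ and $\beta=R$ otherwise. Writing
\[
G^n(\omega,x)=(\sigma_A^n\omega,\,g_{\omega_0\ldots\omega_{n-1}}(x)),\quad F^n(\xi,y)=(\sigma^n\xi,\,f_{\xi_0\ldots\xi_{n-1}}(y)),
\]
the identity $\Pi\circ G^n=F^n\circ\Pi$ (iterated from Lemma~\ref{lem:conj2}) applied to the fiber coordinate yields
\[
f_{\xi_0\ldots\xi_{n-1}}(\beta(x))=\alpha_n\bigl(g_{\omega_0\ldots\omega_{n-1}}(x)\bigr),
\]
where $\alpha_n$ is the identity if $\sigma_A^n\omega\in C$ and $\alpha_n=R$ otherwise. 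In other words,
\[
f_{\xi_0\ldots\xi_{n-1}}\circ\beta \;=\; \alpha_n\circ g_{\omega_0\ldots\omega_{n-1}}.
\]

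Differentiating both sides of this equality at $x$ via the chain rule and taking absolute values, and using that $\lvert R'\rvert\equiv 1$ (so $\lvert\alpha_n'\rvert\equiv\lvert\beta'\rvert\equiv 1$), I obtain
\[
\bigl\lvert (f_{\xi_0\ldots\xi_{n-1}})'(y)\bigr\rvert \;=\; \bigl\lvert(g_{\omega_0\ldots\omega_{n-1}})'(x)\bigr\rvert
\]
for every $n\ge 1$. Taking logarithms, dividing by $n$, and letting $n\to\infty$, the existence of one of the limits forces the existence of the other and their equality, so $\chi_+(\xi,y)=\chi_+^G(\omega,x)$. Since $(\xi,y)=\Pi(\omega,x)$, this is precisely the claim.

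There is no real obstacle: the only mild subtlety is keeping track of the four cases according to whether $\omega\in C$ and whether $\sigma_A^n\omega\in C$, which determine $\beta$ and $\alpha_n$ respectively, but since $\lvert R'\rvert=1$ all four cases collapse to the same derivative identity.
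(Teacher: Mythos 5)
Your proposal is correct and follows essentially the same approach as the paper's proof: iterate the semi-conjugation $\Pi\circ G = F\circ\Pi$, differentiate with the chain rule, and use $\lvert R'\rvert\equiv 1$ to conclude that $\lvert(f_{\xi_0\ldots\xi_{n-1}})'(y)\rvert=\lvert(g_{\omega_0\ldots\omega_{n-1}})'(x)\rvert$ for every $n$, from which the equality of Lyapunov exponents follows upon taking logarithms and limits. Your bookkeeping with $\alpha_n$ and $\beta$ simply makes explicit the ``$\omega\notin C$ is analogous'' case that the paper omits, and is consistent with the intended reading of the lemma, namely $\chi_+^G(\omega,x)=\chi_+(\Pi(\omega,x))$.
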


\begin{proof}
If $\omega \in C$ (recall the definition of $C$ in \eqref{eq:defdeC}) then $\Pi(\omega,x) = (\overline{w},x)$ and as
$R^{-1} = R$, it is not difficult to see that
\[
     f_{\overline{\omega_0}\ldots\overline{\omega_{n-1}}} = g_{\omega_0\ldots\omega_{n-1}} \quad \text{or} \quad f_{\overline{\omega_0}\ldots\overline{\omega_{n-1}}} = R \circ g_{\omega_0\ldots\omega_{n-1}} \quad \text{for each}\,\, n \geq 1.
\]
As $\am R'(y) \fm = 1 $ for all $y \in I$, it follows that in both cases
\[
    \frac{1}{n} \log \am (f_{\overline{\omega_0}\ldots\overline{\omega_{n-1}}})'(x) \fm = \frac{1}{n} \log \am (g_{\omega_0\ldots\omega_{n-1}})'(x) \fm.
\]
Thus, $\chi_+(\omega,x) = \chi_+(\Pi(\omega,x))$ for $\omega \in C$. The case $\omega \notin C$ is analogous.
\end{proof}

\begin{corollary}\label{lem:medhiperbolica}
For every  $G$-invariant and ergodic measure $\mu$ we have $\chi(\mu) = \chi(\Pi_\ast\mu)$. In particular, if $\mu$ is hyperbolic (with respect to $G$) then $\Pi_\ast\mu$ is hyperbolic (with respect to $F$).
\end{corollary}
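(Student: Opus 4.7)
The corollary is a quick consequence of Lemma \ref{lem:expigual} combined with the Birkhoff ergodic theorem applied on both sides of the semi-conjugation $\Pi$. First, I would check that $\Pi_\ast\mu$ is $F$-invariant and ergodic. The $F$-invariance is immediate from Lemma \ref{lem:conj2}. Ergodicity follows by the standard factor argument: for any $F$-invariant Borel set $B\subset\Sigma_N\times I$, the pre-image $\Pi^{-1}(B)$ is a $G$-invariant Borel set with $\mu(\Pi^{-1}(B))=(\Pi_\ast\mu)(B)$, so this value is $0$ or $1$. In particular $\chi(\Pi_\ast\mu)$ is well defined as a constant.

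Next, I would apply Birkhoff's theorem on both sides. Applied to the $\mu$-integrable function $(\omega,x)\mapsto\log\lvert g_{\omega_0}'(x)\rvert$, it produces a set $E\subset\Sigma_A\times I$ of full $\mu$-measure on which the forward fiber Lyapunov exponent with respect to $G$ is well defined and equals $\chi(\mu)$. Applied on the factor side to $(\xi,p)\mapsto\log\lvert f_{\xi_0}'(p)\rvert$ with respect to $\Pi_\ast\mu$, it yields a set $E'\subset\Sigma_N\times I$ of full $\Pi_\ast\mu$-measure on which the forward fiber Lyapunov exponent with respect to $F$ equals $\chi(\Pi_\ast\mu)$. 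Since $\Pi^{-1}(E')$ has full $\mu$-measure, the intersection $E\cap\Pi^{-1}(E')$ is non-empty; pick any point $(\omega,x)$ in it. Lemma \ref{lem:expigual} identifies the $G$-exponent at $(\omega,x)$ with the $F$-exponent at $\Pi(\omega,x)$, forcing $\chi(\mu)=\chi(\Pi_\ast\mu)$. The hyperbolicity claim is immediate, since $\chi(\mu)\neq 0$ if and only if $\chi(\Pi_\ast\mu)\neq 0$.

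The substance has already been carried out in Lemmas \ref{lem:conj2} and \ref{lem:expigual}; no real obstacle remains. The only mildly delicate bookkeeping is the routine transfer of ergodicity through the factor map $\Pi$, and even that is essentially automatic from the semi-conjugation $\Pi\circ G=F\circ \Pi$.
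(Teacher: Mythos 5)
Your argument is correct and spells out what the paper leaves implicit: the Corollary is stated without proof as an immediate consequence of Lemma \ref{lem:expigual}, and your Birkhoff-plus-full-measure-intersection argument is the standard way to make that step explicit. As a side note, the Birkhoff detour is not strictly necessary here, since Lemma \ref{lem:expigual} (in fact its proof, which shows the finite-time exponents agree term by term because $\lvert R'\rvert\equiv 1$) already gives $\log\lvert(g_{\omega_0})'(x)\rvert=\log\lvert(f_{\overline{\omega_0}})'(\Pi_2\Pi(\omega,x))\rvert$ pointwise, so $\chi(\mu)=\chi(\Pi_\ast\mu)$ follows directly from the change-of-variables formula for $\Pi_\ast\mu$ without passing through a.e.\ pointwise exponents.
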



\subsection{Stationary measures}\label{subsec:sm}

We will now consider (ergodic) stationary measures with respect to the extended step skew-product $G$. This is of importance, because their support is intimately related with  the bi-strips stated in Theorem \ref{teo:TP}. As above, we will establish a number of symmetry relations.

In the remainder of this section, $\lambda$ is always a symmetric Markov measure in $\Sigma_A$ with respect to $G$  and $(P_{ji})_{j,i=1}^{2N}$ denotes the stochastic matrix associated to $\lambda$.

\begin{definition}\label{def:medidaestacionaria}
A Borel probability $\mu$ in $\ak 1,\ldots,2N \fk \times I$ is \emph{stationary} (relative to $G$ and $\lambda$) if
\[
    \mu_i(E) = (g_*\mu)_i(E) := \sum_{j=1}^{2N} P_{ji} \mu_j(g_i^{-1}(E)) \quad \text{for all}\,\,E \subset I \,\,\text{Borel set and}\,\,i=1,\ldots,2N,
\]
where $\mu_i$ denotes the restriction $\mu|_{\{i\}\times I}$ (the fiber $\ak i \fk \times I$ is here naturally identified with the interval $I$).
\end{definition}

Let $A \subset (\ak 1,\ldots,2N \fk \times I)$. For each $i = 1,\ldots,2N$, we will denote $A_i := P_2(A \cap (\ak i \fk \times I))$, where $P_2\colon \ak 1,\ldots,2N \fk \times I \to I$ is the canonical projection.

\begin{definition}
A stationary measure $\mu$ in $\ak 1,\ldots,2N \fk \times I$ is  \emph{ergodic} if $\mu_i(A_i) = 1$ for all $i \in \ak1,\ldots,2N\fk$ whenever $A \subset (\ak 1,\ldots,2N \fk \times I)$ satisfies $g_j^{-1}(A_i) = A_j$ for all $i,j$ such that $P_{ji} > 0$.
\end{definition}

\begin{definition}\label{eq:defdemu'}
Let $s\colon \ak 1,\ldots,2N \fk \to \ak 1,\ldots,2N \fk$ given by
\[
       s(i) :=  \left\{
                  \begin{array}{ll}
                    i+N \,& \hbox{if}\,\,i \in \ak 1,\ldots,N \fk \\
                    i-N \, & \hbox{if}\,\,i \in \ak N+1,\ldots,2N \fk.
                  \end{array}
                \right.
\]
Given a Borel probability $\mu$ in $\ak 1,\ldots,2N \fk \times I$, the \emph{mirrored measure of $\mu$} is the measure $\mu'$ in $\ak 1,\ldots,2N \fk \times I$ defined by
\begin{equation}\label{eq:defmirrored}
    \mu'(D \times E) := \mu(s(D) \times R(E))
\end{equation}
for every $D \subset \ak 1,\ldots,2N \fk$, $E \subset I$ Borel sets and extended to the Borel sets from ${\ak 1,\ldots,2N \fk \times I}$ (by the Carath\'eodory Extension Theorem). 
\end{definition}

\begin{lemma}\label{ergdemu'}
The mirrored measure of any stationary (ergodic) measure (relative to $G$ and $\lambda$) is  stationary (ergodic).
\end{lemma}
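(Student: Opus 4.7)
The plan is to verify two symmetry relations and then combine them. First, from the definition of the extended fiber maps in~\eqref{eq:deffibermaps} together with $R \circ R$ being the identity on $I$, I would establish the conjugacy
\[
	g_{s(i)} = R \circ g_i \circ R
	\quad\text{for every }i = 1,\ldots,2N,
\]
by a case-by-case check ($i\in\mathcal{I}_P$ or $i\in\mathcal{I}_R$, and $i$ or $i-N$ in those sets).  Equivalently, this yields the key identity $g_{s(i)}^{-1}(R(E)) = R(g_i^{-1}(E))$ for every Borel set $E\subset I$. Second, since $\lambda$ is symmetric (Definition~\ref{def:medsimetrica}), the relations $P_{ij} = P_{(i+N)(j+N)}$ and $P_{i(j+N)} = P_{(i+N)j}$ for $i,j \in \{1,\ldots,N\}$ extend to the identity $P_{s(j)s(i)} = P_{ji}$ for every $i,j = 1,\ldots,2N$, which I would record once and for all.

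With these in hand, stationarity of $\mu'$ follows from a direct computation. Reading \eqref{eq:defmirrored} fiberwise gives $\mu'_i(E) = \mu_{s(i)}(R(E))$. Applying the stationarity of $\mu$ at the symbol $s(i)$, then the key identity from the previous paragraph, and finally reindexing $j \mapsto s(j)$ in the resulting sum while using $P_{s(j)s(i)} = P_{ji}$, one arrives at
\[
	\mu'_i(E)
	= \sum_{j=1}^{2N} P_{s(j)s(i)}\, \mu_{s(j)}\!\bigl(R(g_i^{-1}(E))\bigr)
	= \sum_{j=1}^{2N} P_{ji}\, \mu'_j(g_i^{-1}(E)),
\]
which is exactly the stationarity of $\mu'$.

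For ergodicity, I would take a set $A \subset \{1,\ldots,2N\}\times I$ satisfying $g_j^{-1}(A_i) = A_j$ whenever $P_{ji} > 0$, and build its \emph{mirror} $A'$ with fibers $A'_i := R(A_{s(i)})$. Using the key identity in the form $g_j^{-1}\circ R = R \circ g_{s(j)}^{-1}$ together with the extended symmetry $P_{s(j)s(i)} = P_{ji}$, I would verify that $A'$ is invariant in the same sense, i.e.\ $g_j^{-1}(A'_i) = A'_j$ whenever $P_{ji}>0$. Ergodicity of $\mu$ then forces $\mu_i(A'_i) = 1$ for all $i$, and unwinding the definitions gives
\[
	\mu'_i(A_i) = \mu_{s(i)}(R(A_i)) = \mu_{s(i)}(A'_{s(i)}) = 1,
\]
so $\mu'$ is ergodic.

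The only real obstacle is bookkeeping. The subtle point to get right is that the conjugacy $g_{s(i)} = R \circ g_i \circ R$ holds \emph{uniformly} in $i$, regardless of whether the original $f_{\bar\imath}$ preserves or reverses orientation; this is exactly how the extended system was designed in \eqref{eq:deffibermaps}, and it is the single geometric fact that makes all the index swaps $i\leftrightarrow s(i)$ intertwine cleanly with the fiberwise reflection $R$.
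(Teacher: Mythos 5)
Your proof is correct and follows essentially the same route as the paper's: apply stationarity of $\mu$ at the mirrored symbol, use the conjugacy $g_{s(i)} = R \circ g_i \circ R$ and the symmetry of $\lambda$ to convert back, and for ergodicity build the mirrored set $A'_i = R(A_{s(i)})$ and invoke ergodicity of $\mu$. Your version is slightly cleaner in that it records the identities $g_{s(i)} = R\circ g_i\circ R$ and $P_{s(j)s(i)} = P_{ji}$ once in full generality and then reindexes via $j\mapsto s(j)$, rather than splitting the sum over $\mathcal{I}_P$ and $\mathcal{I}_R$ as the paper does, but the underlying computation is the same.
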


\begin{proof}
Consider a Borel set $E\subset I$ and $i \in \ak 1,\ldots,N \fk$. By definition of being mirrored~\eqref{eq:defmirrored} and by stationarity of $\mu$ we have
\[
   \begin{split}
     \mu_i'(E)
     &=\mu_{i+N}(R(E))\\
     &= \sum_{j \in \mathcal{I}_R} P_{j(i+N)} \mu_j (g_{i+N}^{-1}(R(E))) + \sum_{j \in \mathcal{I}_P} P_{(j+N)(i+N)} \mu_{j+N} (g_{i+N}^{-1}(R(E)))\\
     &= \sum_{j \in \mathcal{I}_R} P_{(j+N)i} \mu_{j+N}' (R(g_{i+N}^{-1}(R(E)))) + \sum_{j \in \mathcal{I}_P} P_{ji} \mu_j' (R(g_{i+N}^{-1}(R(E)))),
    \end{split}
\]
where for the latter we used  the symmetry of $\lambda$ and~\eqref{eq:defmirrored}.
Recalling now that for the fiber maps we have $g_i=R\circ g_{i+N}\circ R$, see~\eqref{eq:deffibermaps}, we obtain
\[
     \mu_i'(E)
     = \sum_{j \in \mathcal{I}_R} P_{(j+N)i} \mu_{j+N}' (g_i^{-1}(E)) + \sum_{j \in \mathcal{I}_P} P_{ji} \mu_{j}' (g_i^{-1}(E))
     = \sum_{j=1}^{2N} P_{ji} \mu_j'(g_i^{-1}(E)).
\]
Analogously, for  $i \in \ak N+1,\ldots,2N \fk$. This proves that $\mu'$ is stationary.

Let us see now that $\mu$ ergodic implies $\mu'$ also ergodic. Let $A \subset (\ak 1,\ldots,2N \fk \times I)$ satisfying $g_j^{-1}(A_i) = A_j$ for all $j,i$ such that $P_{ji} > 0$.
Note that
\[
    A':= \bigcup_{i=1}^{N} \{i\} \times (R(A_{i+N})) \cup \bigcup_{i=N+1}^{2N} \{i\} \times (R(A_{i-N}))
\]
satisfies the same property. In fact, it suffices to show that $g_j^{-1}(A_i') = A_j'$ for $i,j=1,\ldots,N$ such that $P_{ji} > 0$, since the other cases are analogous.
To see that this is true, note that the definitions of $A'$ and of the fiber maps of $G$ imply that
\[
    g_j^{-1}(A_i') = g_j^{-1}(R(A_{i+N})) = R(g_{j+N}^{-1}(A_{i+N})) = R(A_{j+N}) = A_j'
\]
Finally, to see that $\mu_i'(A_i) = 1$ for all $i=1,\ldots,2N$, note that if $i\in \ak 1,\ldots,N \fk$, the definitions of $\mu'$ and $A'$, the invariance of $A'$ and the ergodicity of $\mu$ imply that
\[
    \mu_i'(A_i) = \mu_{i+N}(R(A_i)) = \mu_{i+N}(A_{i+N}') = 1.
\]
As the case where $i\in \ak N+1,\ldots,2N \fk$ is analogous, we have that $\mu_i'(A_i) = 1$ for all $i=1,\ldots,2N$. As the invariant set $A$ was arbitrary, it follows that $\mu'$ is also ergodic. This concludes the proof.
\end{proof}


\subsection{Genericity conditions}\label{subsec:cg}

In this section we will verify the genericity conditions claimed in Theorem \ref{teo:TP}.

Fix $N \geq 1$. As above, we denote by $\mathcal{S}(N)$ the set of all step skew-products $F$ as in Definition \ref{defdeF}. Let $\mathcal{P}(N) \subset \mathcal{S}(N)$ be the subset of all those for which all fiber maps preserve orientation and $\mathcal{R}(N) := \mathcal{S}(N)\setminus \mathcal{P}(N)$. As $N$ is fixed, we will only write $\mathcal{S} := \mathcal{S}(N)$, $\mathcal{P}:=\mathcal{P}(N)$ and  $\mathcal{R}:=\mathcal{R}(N)$.
The sets $\mathcal{S}$, $\mathcal{P}$ and $\mathcal{R}$ are metric spaces when equipped with the distance $d$ defined by \begin{equation}\label{eq:distC1}
    d(F,H) := \max \ak d_{C^1} (f_i,h_i) \colon i=1,\ldots,N \fk.
\end{equation}
We state below the three conditions in $\mathcal{R}$ for Theorem \ref{teo:TP} be valid (recall that \cite[2.15 Theorem]{KV} already claims Theorem \ref{teo:TP} is true for the space $\mathcal{P}$).

\begin{enumerate}
\item[i)] \textbf{(Short periodic orbits are hyperbolic.)} Every fixed point of every composition $f_{\xi_1\ldots\xi_n}$, with $a_{\xi_n\xi_1} = 1$ and $1 \leq n \leq 2N$, is hyperbolic.
\item[ii)] \textbf{(Nonexistence of heteroclinic orbits.)} No attracting (repelling) fixed point of a map $f_{\xi_1\ldots\xi_n}$, with $a_{\xi_n\xi_1} = 1$ and $1 \leq n \leq 2N$, is sent to a repelling (an attracting) fixed point of a map $f_{\eta_1\ldots\eta_m}$ satisfying  $a_{\eta_m\eta_1} = 1$ and $1 \leq m \leq 2N$, by a composition $f_{\rho_1\ldots\rho_l}$, with $1 \leq l\leq 2N-1$.
\item [iii)] \textbf{(Nonexistence of cycles.)} There do not exist two points $a,b \in I$ such that
\[
     \left\{
        \begin{array}{ll}
          f_i(a) = a \quad \text{and} \quad R(f_i(R(b))) = b \quad \text{for all} \,\,i \in \mathcal{I}_P\\
          R(f_i(a)) = b \quad \text{and} \quad f_i(R(b)) = a \quad \text{for all} \,\,i \in \mathcal{I}_R.
        \end{array}
     \right.
\]
\end{enumerate}

The main aim of this section is to show the following result.

\begin{proposition}\label{prop:abeden}
	The subset $\widehat{\mathcal R}\subset\mathcal R$ of the step skew-products satisfying conditions i), ii) and iii) is open and dense with respect to the distance $d$ defined in~\eqref{eq:distC1}.
\end{proposition}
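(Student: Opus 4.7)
My plan is to reduce the proposition to the elementary fact that a finite intersection of $C^1$-open and $C^1$-dense subsets of the metric space $\mathcal R$ is itself open and dense. I will show separately that each of the conditions i), ii), iii) individually defines such a subset of $\mathcal R$. The key observation enabling this reduction is that every composition $f_{\xi_1\ldots\xi_n}$ appearing in any of the three conditions has length $n\le 2N$ along an admissible word $\xi_1\ldots\xi_n$, so only finitely many such compositions are involved, and consequently each of i), ii), iii) amounts to imposing only finitely many constraints.

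For openness I would argue as follows. A hyperbolic fixed point $p$ of a fixed composition $g=f_{\xi_1\ldots\xi_n}$ persists, as a hyperbolic fixed point of the same stability type, under $C^1$-small perturbations of any of the constituent fiber maps (implicit function theorem), and its multiplier $g'(p)$ depends continuously on $F$ in the $C^1$-topology. Hence the condition $|g'(p)|\ne 1$ for all relevant $g$ and $p$ is $C^1$-open, proving openness of i). Once i) is achieved, the hyperbolic fixed points appearing in ii) can be labelled as continuous functions $p_F, q_F$ of $F$ in a small neighborhood, so that ii) is an intersection of finitely many open conditions of the form $f_{\rho_1\ldots\rho_l}(p_F)\ne q_F$, and hence is open. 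Similarly, iii) is the negation of a finite system of equalities $f_i(a)=a$, $R(f_i(R(b)))=b$, $R(f_i(a))=b$, $f_i(R(b))=a$, each of which depends continuously on $F$ and fails on an open set.

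For density, fix $F\in\mathcal R$ and $\varepsilon>0$. After an arbitrarily small preliminary perturbation I may assume that the (finitely many) orbit points $f_{\xi_{k-1}\ldots\xi_1}(p)$ arising from fixed points $p$ of admissible compositions of length $\le 2N$ are pairwise distinct in the fiber. To destroy any violation of i), ii), or iii), I modify a single fiber map $f_i$ appearing in the offending composition by adding a $C^1$-small bump supported in a small neighborhood of one chosen orbit point, disjoint from all other relevant orbit points. Such a local perturbation changes the offending multiplier (for i)), the value of the offending composition at a specific point (for ii)), or the offending side of a cycle equation (for iii)) by a prescribed small nonzero amount, while leaving every other composition evaluated at any other relevant orbit point unchanged. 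Enumerating the finitely many violations and performing the perturbations successively, and shrinking $\varepsilon$ at each stage to stay inside the openness neighborhoods of the already-achieved conditions, one produces $H\in\widehat{\mathcal R}$ with $d(F,H)<\varepsilon$.

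The main technical obstacle is precisely that a single fiber map $f_i$ occurs in many admissible compositions of length $\le 2N$, so a naive perturbation of $f_i$ would alter several constraints simultaneously and could spoil already-achieved conditions. The disjoint-support localization of the bump perturbations, made possible by the separation of the finitely many relevant orbit points in the fiber, is what isolates the effect of each perturbation to a single constraint. All other steps are standard $C^1$-transversality arguments applied to a finite collection of equations.
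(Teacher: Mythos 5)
Your overall strategy --- reduce to showing each of i), ii), iii) holds on a $C^1$-open and $C^1$-dense set, argue openness by persistence of hyperbolic fixed points and continuity of the finitely many constraints, and argue density by perturbing one fiber map at a time with a localized bump --- is the same as the paper's. The openness part and the density of i) and iii) match what the paper does (the paper dispatches these as standard transversality arguments). The divergence, and the gap, is in your density argument for condition ii).

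You assume that, after an arbitrarily small preliminary perturbation, the finitely many relevant orbit points lie in pairwise disjoint positions in the fiber, so that the bump modifying $f_{\rho_l}$ near $f_{\rho_1\ldots\rho_{l-1}}(p)$ ``leaves every other composition evaluated at any other relevant orbit point unchanged.'' This is exactly the delicate point, and it fails in general. If the word $\rho_1\ldots\rho_{l-1}$ happens to be a prefix of the $\xi$-cycle word (or more generally if $f_{\rho_1\ldots\rho_{l-1}}(p)$ lands on the periodic orbit of $p$ or of $q$, at a position where $f_{\rho_l}$ is itself applied in that cycle), then the point where you must put the bump coincides \emph{identically} with a point of the periodic orbit; this coincidence is not an accidental equality that a preliminary perturbation can destroy --- it is a consequence of the word structure and persists under any perturbation. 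In that case the bump necessarily moves the continuation of $p$ (and possibly $q$) along with the heteroclinic image, and the assertion that the offending quantity changes ``by a prescribed small nonzero amount while leaving every other composition\ldots unchanged'' is simply no longer available: the net first-order change of $f_{\rho_1\ldots\rho_l}(p^h)-q^h$ is the sum of a direct term and two feedback terms coming from the motion of the hyperbolic continuations $p^h$, $q^h$, and without a computation one cannot rule out cancellation.

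This is precisely what the paper's proof has to confront, and it is the only part of their argument that is not routine. The paper differentiates the quantity $\phi_i(h)=h\circ f_{\rho_1\ldots\rho_{l-1}}(p_i^h)-q_i^h$ along a one-parameter family $h_t$, writes $\Phi_i'(0)=A+B+C$ where $B$, $C$ are the feedback terms through the hyperbolic continuations (given by the implicit function theorem), and then chooses the family $h_t$ so that $B=C=0$ while $A\neq 0$. Your proposal would be complete if you carried out the analogous transversality computation, or if you excluded the problematic word configurations by a separate argument; as written, the step ``I may assume the orbit points are pairwise distinct'' is unjustified, and the step ``the bump changes only the offending constraint'' does not follow from it in the cases where the bump's support is forced to meet the periodic orbit.
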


Note that the three conditions above are similar to the conditions imposed by Kleptsyn and Volk in \cite{KV}. By our choice of the extended step skew-product, the following result is then immediate and stated without explicit proof.

\begin{proposition}\label{prop:GsatisKV}
	For every $F\in\widehat{\mathcal R}$, the corresponding extended step skew-product $G$ satisfies the genericity  conditions of \cite[2.15 Theorem]{KV}.	
\end{proposition}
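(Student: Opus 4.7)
I will establish openness and density of $\widehat{\mathcal R}$ separately. \emph{Openness} follows from continuity and finiteness: all three conditions involve only finitely many admissible words of length at most $2N$ (and, for ii), also a third word of length at most $2N-1$). For each admissible word $\xi_1\ldots\xi_n$, the composition $f_{\xi_1\ldots\xi_n}$ depends $C^1$-continuously on $F$. When $F\in\widehat{\mathcal R}$, condition i) guarantees that every fixed point of every such composition is hyperbolic; by the implicit function theorem applied to $p\mapsto f_{\xi_1\ldots\xi_n}(p)-p$, each fixed point persists and varies $C^1$-continuously under small perturbations of $F$, and its multiplier stays away from $\pm 1$. Hence i) is stable. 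Conditions ii) and iii) are finite conjunctions of strict inequalities between quantities (images of fixed points under compositions) depending continuously on $F$, so they are open as well, and thus $\widehat{\mathcal R}$ is open in $\mathcal R$.

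\emph{Density} requires a sequence of three small localized perturbations. Given $F\in\mathcal R$ and $\varepsilon>0$, I will produce $F'\in\widehat{\mathcal R}$ with $d(F,F')<\varepsilon$. In Step 1, enumerate the finitely many admissible words $\xi_1\ldots\xi_n$ of length $\le 2N$ and, by a Kupka--Smale-type argument, make every fixed point of every $f_{\xi_1\ldots\xi_n}$ hyperbolic: for a non-hyperbolic fixed point $p$ with orbit $p=p_1,\,p_2=f_{\xi_1}(p_1),\ldots$, a small localized perturbation of a single $f_{\xi_j}$ supported in a small neighborhood of $p_j$ alters $f'_{\xi_j}(p_j)$ by an arbitrarily small amount, which in turn changes the multiplier of $f_{\xi_1\ldots\xi_n}$ at $p$ without creating new fixed points; iterate finitely many times. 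In Step 2, for each triple $(\xi,\eta,\rho)$ of admissible words with $\xi$ yielding an attracting fixed point $p$, $\eta$ a repelling fixed point $q$, and $f_{\rho_1\ldots\rho_l}(p)=q$, a localized small perturbation of some $f_{\rho_j}$ supported in a small interval disjoint from all short periodic orbits of Step 1 shifts $f_{\rho_1\ldots\rho_l}(p)$ off $q$; processing the finite list of triples with sufficiently small perturbations preserves i) (by its openness) and establishes ii). Step 3 destroys the finitely many possible solutions $(a,b)$ of the system in iii) by a further localized perturbation of one $f_i$, chosen small enough to preserve i) and ii).

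\emph{Main obstacle.} The key technical difficulty is ensuring that each later perturbation preserves the generic conditions already achieved, without which the three steps do not combine to land us in $\widehat{\mathcal R}$. This is handled via the openness established above: at each step, one chooses the new perturbation strictly smaller than the open neighborhood on which the earlier conditions remain valid, and one carefully localizes the support of each perturbation to avoid the finitely many points (orbits of periodic points and images of fixed points) involved in previously established inequalities. Since each step only needs to break a finite list of equalities, this finite-stage induction produces the desired $F'\in\widehat{\mathcal R}$ within distance $\varepsilon$ of $F$.
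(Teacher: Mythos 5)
Your proposal does not address the statement it is supposed to prove. Proposition~\ref{prop:GsatisKV} is a \emph{transfer} statement about a single, fixed $F\in\widehat{\mathcal R}$: it asserts that if $F$ satisfies conditions i), ii) and iii) of Section~\ref{subsec:cg}, then the extended step skew-product $G$ (over the doubled alphabet $\Sigma_A$, with all fiber maps $g_i$ orientation preserving) satisfies the genericity hypotheses under which \cite[2.15 Theorem]{KV} applies. What you have written instead is a proof that $\widehat{\mathcal R}$ is open and dense in $\mathcal R$ --- that is the content of Proposition~\ref{prop:abeden}, the \emph{preceding} result in the paper, and your argument (openness via persistence of hyperbolic fixed points and stability of finitely many strict inequalities, density via a three-stage Kupka--Smale-type perturbation scheme) is indeed essentially the strategy the paper uses for that other proposition. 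But nothing in your text mentions $G$, the alphabet $\{1,\dots,2N\}$, the maps $g_i$, or the conditions of Kleptsyn--Volk, so the actual claim is left unproved.

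What is needed is the following correspondence (which the paper declares ``immediate by construction'' and states without explicit proof). Every $A$-admissible cyclic word $\omega_1\ldots\omega_n$ in $\Sigma_A$ with $n\le 2N$ projects to the word $\overline{\omega_1}\ldots\overline{\omega_n}$ in $\Sigma_N$, and by the definition \eqref{eq:deffibermaps} of the $g_i$ the composition $g_{\omega_1\ldots\omega_n}$ equals either $f_{\overline{\omega_1}\ldots\overline{\omega_n}}$ or $R\circ f_{\overline{\omega_1}\ldots\overline{\omega_n}}\circ R$ (the interior occurrences of $R$ cancel in pairs along an admissible word, and admissibility of the closing transition $a_{\omega_n\omega_1}=1$ forces the total composition to preserve orientation). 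Since $R$ is an isometric involution, fixed points and their multipliers, hence hyperbolicity, are carried back and forth between $g_{\omega_1\ldots\omega_n}$ and $f_{\overline{\omega_1}\ldots\overline{\omega_n}}$; this derives the Kleptsyn--Volk condition on short periodic orbits of $G$ from condition i) on $F$, and the analogous bookkeeping with connecting words derives their no-heteroclinic-connection condition from condition ii). Condition iii) is precisely what rules out the degenerate configuration specific to the doubled system (a pair $(a,b)$ with $b=R$-mirror data fixed by all the $g_i$ simultaneously), which is the remaining hypothesis of \cite[2.15 Theorem]{KV}. Without this dictionary between words in $\Sigma_A$ and words in $\Sigma_N$, the proposition is not established.
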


\begin{proof}[Proof of Proposition \ref{prop:abeden}]
\setcounter{claim}{0}
It suffices to show that any of the conditions i), ii), and iii)  hold in an open and dense subset of $\mathcal{R}$.

To show that conditions i) and iii) holds in open and dense sets $\widehat{\mathcal R}_i$ and $\widehat{\mathcal R}_{iii}$, respectively, follows standard arguments, to show i) see for example   \cite[Chapter XI, Sections 2 and 3 and in particular Parametric Transversality Theorem 2.3]{R}.

To show that condition $ii)$  holds in an open and dense subset $\widehat{\mathcal R}_{ii}\subset\mathcal{R}$, observe that $\widehat{\mathcal R}_{ii}$ can be written as
\[
	\widehat{\mathcal R}_{ii}
	= \bigcap_{\xi,\eta,\rho}\mathcal R_{\xi\eta\rho}
	\cap \bigcap_{\xi,\eta,\rho}\widehat{\mathcal R}_{\xi\eta\rho} ,
\]
where the first (the second) intersection is taken over all triples of finite sequences $\xi=\xi_1\ldots\xi_n$, $\eta=\eta_1\ldots\eta_m$, and $\rho=\rho_1\ldots\rho_l$ with $1 \leq n,m \leq 2N$ and $1 \leq l \leq 2N-1$ of all step skew-products such that no attracting (repelling) fixed point of $f_{\xi}$
is mapped to a repelling (an attracting) fixed point of $f_{\eta}$ by $f_{\rho}$.
Thus, it suffices to show that each  of these sets is open and dense.
The proof that theses sets are open is done by similar arguments as for condition i) and is omitted. Hence it suffices to prove the following claim (the corresponding claim for $\widehat{\mathcal R}_{\xi\eta\rho}$ is similar and also omitted).

\begin{claim}
The set $\mathcal{R}_{\xi\eta\rho}$ is dense in $\mathcal{R}$ for each triple of sequences $\xi, \eta,\rho$ as above.
\end{claim}

\begin{proof}
Fix $\xi,\eta$, and  $\rho$ and consider $F \in \widehat{\mathcal R}_i \setminus \mathcal{R}_{\xi\eta\rho}$. As $1 \leq l \leq 2N-1$, there exists a symbol that appears only once in $\rho=\rho_1\ldots\rho_l$. To simplify notation, suppose that this symbol is $\rho_l$ (the other cases are analogous).

The proof is done by small perturbations of  the map $f_{\rho_l}$ (in the $C^1$ topology), replacing it by a map $h$ while keeping the maps $f_i$, $i\ne \rho_l$, fixed and hence obtaining a step skew-product $F_h$ which is close to $F$ (with respect to $d$ defined in \eqref{eq:distC1}) and satisfies $F_h \in \mathcal{R}_{\xi\eta\rho}$.

If $\rho_l$ neither appears as a symbol in $\xi_1\ldots\xi_n$ nor in $\eta_1\ldots\eta_m$, the existence of such perturbations follows easily since in this case neither the attracting fixed points of $f_{{\xi_1}\ldots{\xi_n}}$ nor the repelling fixed points of $f_{{\eta_1}\ldots{\eta_m}}$ are affected.
The more general case is a bit more involved and will be treated below.

Let $\ak p_1, \ldots, p_r \fk$ be the set of attracting fixed points of $f_{\xi_1\ldots\xi_n}$ and $\ak q_1,\ldots,q_r\fk$ the set of repelling fixed points of $f_{\eta_1\ldots\eta_m}$ such that for each  $i=1,\ldots,r$ there exists $q_j$, $j\in\{1,\ldots,r\}$, with $f_{\rho_1\ldots\rho_l}(p_i) = q_j$ (the choice of $F$ implies that these sets are nonempty). Reordering the index, we can assume that $j=i$.
Consider the family $\{\widetilde f_i\}_{i=1,\ldots,N}$ defined by $\widetilde f_j=f_j$ if $j\ne\rho_l$ and $\widetilde f_j=h$ otherwise and denote by $\widetilde F=\widetilde F(h)$ the corresponding step skew-product.
Consider $\varepsilon> 0$ such that if $d_{C^1}(h,f_{\rho_l}) < \varepsilon$
then we have
\begin{itemize}
\item $\widetilde f_{\xi_n}\circ\ldots\circ\widetilde f_{\xi_1}$ has only  one attracting fixed point $p_i^h\in (p_i-\varepsilon,p_i+\varepsilon)$, $i=1,\ldots,r$, and coincides with $f_{\xi_1\ldots\xi_n}$ outside these intervals;
\item $\widetilde f_{\eta_m}\circ\ldots\circ\widetilde f_{\eta_1}$ has only one repelling fixed point $q_i^h\in(q_i - \varepsilon, q_i+\varepsilon)$, $i=1,\ldots,r$, and coincides with  $f_{\eta_1\ldots\eta_m}$  outside these intervals.
\end{itemize}
For each $i=1,\ldots,r$, define the map
\[
    \phi_i\colon B_{C^1(I,\real)}(f_{\rho_l},\varepsilon) \to \real,
     \quad  h \mapsto h \circ f_{\rho_1\ldots\rho_{l-1}} (p_i^h) - q_i^{h}.
\]
Given $h \in B_{C^1(I,\real)}(f_{\rho_l},\varepsilon)$, we have  $\widetilde F\in \mathcal{R}_{\xi\eta\rho}$
if $h \in \mathcal B_{C^1(I,\real)}(f_{\rho_l},\varepsilon) \setminus \bigcup_{i=1}^{r} \phi_i^{-1} (0)$.

Given $\delta > 0$ sufficiently small, we will consider a map (which will be further specified at the end of the proof)
\[
    \kappa\colon B_\delta(0) \to B_{C^1(I,\real)}(f_{\rho_l},\varepsilon),
     \quad t \mapsto h_t,
\]
so that $\kappa(0) = f_{\rho_l}$. Write
\[
	 f^{(t)}_{\xi_1\ldots\xi_n}
	= \widetilde f_{\xi_n}\circ\ldots\circ\widetilde f_{\xi_1}
	\quad \text{and}\quad
	 f^{(t)}_{\eta_1\ldots\eta_n}
	= \widetilde f_{\eta_m}\circ\ldots\circ\widetilde f_{\eta_1},
\]
where the maps $\widetilde f_j$ are defined as above considering the specific perturbation $h=h_t$ and denote by $F^{(t)}$ the new skew-product obtained. In this way we  consider, for each $i=1,\ldots,r$, the real function $\Phi_i := \phi_i \circ \kappa$.

Below we will choose  $\kappa$ in such a way that $\Phi_i'(0) \neq 0$ for all $i=1,\ldots,r$, then $\Phi_i^{-1}(\real\setminus{0})$ will be dense in $B_\delta(0)$. In particular, it will be possible to approach $f_{\rho_l}$ by a sequence of maps $h_t$ so that $F^{(t)} \in \mathcal{R}_{\xi\eta\rho}$, which will conclude the proof of the claim.

Assuming that $\kappa$ was already chosen,
let us calculate $\Phi_i'(0)$.
\[
\begin{split}
    &\Phi_i'(0)
    = \lim_{t \to 0} \frac1t(\Phi_i(t) - \Phi_i(0))
    = \lim_{t \to 0} \frac1t\Big(h_t (f_{\rho_1\ldots\rho_{l-1}}) (p_i^{h_t}) - q_i^{h_t} - \big(f_{\rho_1\ldots\rho_{l}} (p_i) - q_i\big)\Big)\\
    &=
    	\lim_{t \to 0} \frac{(h_t - f_{\rho_l}) (f_{\rho_1\ldots\rho_{l-1}} (p_i^{h_t})}{t}
		 + \lim_{t \to 0}\frac{f_{\rho_1\ldots\rho_{l}} (p_i^{h_t}) - f_{\rho_1\ldots\rho_{l}} (p_i)}{t}
		  - \lim_{t \to 0}\frac{q_i^{h_t} - q_i}{t}\\
	&=: A+B+C.		
\end{split}
\]
Note that
\[
	A
	= \frac{\partial h_t}{\partial t}(f_{\rho_1\ldots\rho_{l-1}}(p_i))
\]
and recall that we assume that the symbol $\rho_l$ does not appear in $\rho_1\ldots\rho_{l-1}$.
Note that the existence of the limit $C$ is nothing but the derivative of the hyperbolic continuation  (its derivative is given by the Implicit Function Theorem)
\begin{equation}\label{eq:ii.5}
    C
    = \frac{\partial q_i^{h_t}}{\partial t}|_{t=0}
    = \frac{1}{(f_{\eta_1...\eta_m})'(q_i)-1}
    		\frac{\partial f_{{\eta_1}...{\eta_m}}^{(t)}}{\partial t}(q_i).
\end{equation}
The existence of the limit $B$ is analogous.
\begin{equation}\label{eq:ii.4}
  B
  =  (f_{\rho_1...\rho_l})'(p_i) \frac{\partial p_i^{h_t}}{\partial t}|_{t=0}
  =  (f_{\rho_1...\rho_l})'(p_i)
  		\frac{1}{(f_{\xi_1...\xi_n})'(p_i)-1}
		\frac{\partial f_{{\xi_1}...{\xi_n}}^{(t)}}{\partial t}(p_i).
\end{equation}
Now we choose the one-parameter family $h_t$ (and hence our map $\kappa$) such that
\begin{itemize}
\item  the derivatives in \eqref{eq:ii.4} and \eqref{eq:ii.5} both are equal to $0$ (this is possible since each of them depends only on the value of the perturbation at $p_i$ and at $q_i$, respectively);
\item the limit $A$ is different from $0$ (this is possible since it is calculated at the hyperbolic continuation, which also changes and is different of $p$).
\end{itemize}
This conclude the proof of the claim.
\end{proof}


This finishes the proof of the proposition.
\end{proof}

\subsection{General structure of the (bi-)strips}\label{subsec:egf}

In this section we will assume that the hypotheses of Theorem \ref{teo:TP} are satisfied. We will analyze in detail the structure of the attracting and repelling bi-strips.

Let $F \in \widehat{\mathcal R}$ and $G$ be the corresponding extended step skew-product.  Let $\lambda_0$ be a nondegenerate Markov measure on $\Sigma_N$ and let $\lambda$ be its symmetric extension (recall Lemma \ref{lem:unisymext}).

\subsubsection{Topological structure}

In this section we will show that attracting (repelling) symmetric strips with respect to the extended step skew-product $G$ correspond to the same attracting (repelling) bi-strip with respect to $F$. We will show also that there exists a certain order among the bi-strips associated to $F$.

Recall that by Proposition \ref{prop:GsatisKV}, we can apply \cite[2.15 Theorem]{KV} to $G$ and $\lambda$. Hence there exists a finite collection of attracting and repelling strips (with respect to $G$) $S_1,...,S_n\subset\Sigma_A \times I$ and $R_1,...,R_{n-1}\subset\Sigma_A \times I$, respectively, such that their union is the whole space $\Sigma_A \times I$.
Note that, in fact, the strips do not depend on the choice of the (nondegenerate) Markov measure $\lambda$ but are only determined by the support of the corresponding stationary measure (which is not altered when changing weights).
 Moreover, possibly after some reordering, we have
\begin{equation}\label{eq:ordem}
    S_1 < R_1 < S_2 <... < S_{n-1} < R_{n-1} < S_n,
\end{equation}
where the notation $S < R$ means that $x<y$ whenever $(\omega,x) \in S$ and $(\omega,y) \in R$. Furthermore, the authors show that each attracting strip $S$ is of the form
\[
    S = \bigsqcup_{i=1}^{2N} [0;i] \times I_{\mu_i},
\]
where $I_{\mu_i}$ is a sufficiently small neighborhood of the closed interval which extremes are $\min \supp \mu_i$ and $\max \supp \mu_i$, where $\mu_i$ denotes the restriction $\mu|_{\{i\}\times I}$ of  an ergodic stationary measure $\mu$ (with respect to $G$).

By Lemma \ref{projdasfaixas}, for each attracting strip $S$, its projection $\Pi(S)$ is an attracting bi-strip (with respect to $F$). We can write
\[
    \Pi(S) = \Pi|_{C \times I}(S) \cup \Pi|_{(\Sigma_A\setminus C) \times I}(S),
\]
where $C$ is the set defined in \eqref{eq:defdeC}.
Note also that if $\mu$ is an ergodic stationary measure with respect to $G$, Lemma \ref{ergdemu'} implies that the mirrored measure of $\mu$, denoted by $\mu'$, is also ergodic and stationary. Denoting by $S$ the attracting strip obtained from $\mu$ and by $S'$ the attracting strip obtained from $\mu'$, the definition of $I_{\mu_i}$ implies that
\[
    \Pi|_{C \times I}(S) = \Pi|_{(\Sigma_A\setminus C) \times I}(S') \quad \text{and} \quad \Pi|_{(\Sigma_A \setminus C) \times I}(S) = \Pi|_{C \times I}(S')
\]
and then
\begin{equation}\label{eq:faixasimetrica}
    \Pi(S) = \Pi(S').
\end{equation}
The following result is also valid.

\begin{lemma}\label{lem:atmostone}
There exists at most one ergodic stationary measure $\mu$ with respect to $G$ such that $\mu' = \mu$.
\end{lemma}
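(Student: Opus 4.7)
The plan is to realize the mirror operation $\mu\mapsto\mu'$ as being induced by a genuine involution of the extended phase space, and then to read off the conclusion from the list of attracting strips provided by \cite{KV}.

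First, I would introduce the homeomorphism
\[
	T\colon\Sigma_A\times I\to\Sigma_A\times I,\qquad
	T(\omega,x) := (s(\omega),R(x)),
\]
where $s$ is applied coordinate-wise to $\omega$. A short case-by-case computation using the formulas in~\eqref{eq:deffibermaps} for $g_i$ and $g_{i+N}$ (splitting on whether the symbol lies in $\mathcal I_P$ or $\mathcal I_R$, and on whether it is $\le N$ or $>N$) together with $R\circ R=\mathrm{Id}$ gives the commutation $T\circ G = G\circ T$. The symmetry of $\lambda$ from Definition~\ref{def:medsimetrica} amounts precisely to $s_*\lambda=\lambda$, so $T$ preserves the base projection. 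A direct comparison with~\eqref{eq:defmirrored} then identifies $\mu' = T_*\mu$ for every $G$-invariant measure $\mu$ projecting to $\lambda$, and hence the condition $\mu=\mu'$ becomes equivalent to the $T$-invariance of $\mu$.

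Next, by Proposition~\ref{prop:GsatisKV} the theorem of \cite{KV} applies to $G$, yielding the finite decomposition \eqref{eq:ordem} into attracting strips $S_1<R_1<\cdots<S_n$ in which each $S_k$ is the support (up to a small thickening) of a unique ergodic stationary measure. Since $T$ is a homeomorphism commuting with $G$, it permutes the attracting strips. As $T$ maps the fiber over $\omega$ to the fiber over $s(\omega)$ and acts there by the orientation-reversing involution $R$, the ordering~\eqref{eq:ordem} is reversed on every fiber by $T$. Comparing the two orderings in fibers $\omega$ and $s(\omega)$ then forces $T(S_k)=S_{n+1-k}$ for all $k$.

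A $T$-invariant attracting strip must therefore satisfy $k=n+1-k$, which has at most one solution ($n$ odd, $k=(n+1)/2$). Via the bijection between attracting strips and ergodic stationary measures, this yields at most one ergodic stationary measure $\mu$ with $T_*\mu=\mu$, i.e., with $\mu'=\mu$, as claimed.

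I expect the only real computation to be the verification of $T\circ G = G\circ T$ across the four symbol classes; once the involution $T$ is in place, the rest of the argument is bookkeeping with the fiberwise order reversal induced by $R$ and the pigeonhole-style count $k=n+1-k$.
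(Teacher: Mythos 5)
The proposal is correct and rests on the same two ingredients the paper uses---the total ordering \eqref{eq:ordem} of the attracting strips across all fibers from~\cite{KV}, and the fact that mirroring reverses this order on each fiber---but it packages them differently. You realize the mirror operation as a genuine phase-space involution $T(\omega,x)=(s(\omega),R(x))$, verify $T\circ G=G\circ T$ (a short case-split on~\eqref{eq:deffibermaps}, which does check out) and $s_*\lambda=\lambda$, and then deduce from the fiberwise order reversal that $T$ pairs $S_k$ with $S_{n+1-k}$, so that the lemma reduces to counting solutions of $k=n+1-k$. The paper instead argues directly by contradiction: assuming two mirror-invariant ergodic stationary measures $\mu,\nu$, it compares the supports $I_{\mu_1},I_{\nu_1}$ in fiber $1$ with the mirrored supports $I_{\mu_{N+1}},I_{\nu_{N+1}}$ in fiber $N+1$ and finds the two orderings incompatible. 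Your phrasing is conceptually cleaner and makes the at-most-one count transparent, at the cost of the (easy but nontrivial) conjugacy verification. Two points to tighten: (a) $T$ acts on $\Sigma_A\times I$ whereas a stationary measure and its mirror live on $\{1,\ldots,2N\}\times I$, so you should either work with the induced involution $(i,x)\mapsto(s(i),R(x))$ there (for which \eqref{eq:defmirrored} literally gives $\mu'=T_*\mu$) or pass to the associated ergodic $G$-invariant measure; (b) the strips carry non-canonical thickenings, so the assertion ``$T$ permutes the attracting strips'' is most cleanly justified by saying that $T$ permutes the supports of the ergodic stationary measures (this is essentially Lemma~\ref{ergdemu'}), and the pairing $T(S_k)=S_{n+1-k}$ should then be read at the level of supports, which is all the fixed-point count needs.
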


\begin{proof}
By contradiction, suppose that there exist two ergodic stationary measures $\mu$ and $\nu$ such that $\mu' = \mu$ and $\nu' = \nu$. Then $I_{\mu'_1} = I_{\mu_1}$ and by the order of the attracting strips observed in \eqref{eq:ordem}, we should have either $I_{\nu_1} < I_{\mu_1}$ or $I_{\nu_1} > I_{\mu_1}$.
Suppose that $I_{\nu_1} < I_{\mu_1}$ (the other case is analogous). Using again the order of the attracting strips,
we should have
\[
    I_{\nu'_{N+1}} = I_{\nu_{N+1}} < I_{\mu_{N+1}} = I_{\mu'_{N+1}}.
\]
However, by the fact that $I_{\nu_1} < I_{\mu_1}$ and by the definitions of $\mu'$ and $\nu'$ we should also have that $I_{\mu'_{N+1}} < I_{\nu'_{N+1}}$, which gives a contradiction.  We are done.
\end{proof}

All the remarks above imply that if there exists an even number of attracting strips in $\Sigma_A \times I$, then they pairwise (one strip and its symmetric copy) project to the same image by $\Pi$. Otherwise, if there exists an odd number of attracting strips in $\Sigma_A \times I$, then all but one such strips pairwise (again, one strip and its symmetric copy) project to the same image by $\Pi$.
The same conclusion is valid for the repelling strips in $\Sigma_A \times I$. Furthermore, the images by $\Pi$ of the attracting and repelling strips do not intersect each other, since the projection $\Pi$ is 2-to-1 and the strips are all attracting/repelling strips are pairwise disjoint.  Thus, if $n$ is even, \eqref{eq:ordem} implies that
\begin{multline*}
    \Pi|_{C \times I} (S_1) < \Pi|_{C \times I}(R_1) < \Pi|_{C \times I}(S_2) < ...\\
    < \Pi|_{C \times I}(S_{\frac{n}{2}}) < \Pi|_{C \times I} (R_{\frac{n}{2}}) = \Pi|_{(\Sigma_A \setminus C) \times I} (R_{\frac{n}{2}}) < \Pi|_{(\Sigma_A \setminus C) \times I}(S_{\frac{n}{2}})< \\
    ...
    ... < \Pi|_{(\Sigma_A \setminus C) \times I} (R_1) < \Pi|_{(\Sigma_A \setminus C) \times I}(S_1).
\end{multline*}
If $n$ is odd then we have
\begin{multline*}
    \Pi|_{C \times I} (S_1) < \Pi|_{C \times I}(R_1) < \Pi|_{C \times I}(S_2) < ...\\
    < \Pi|_{C \times I}(R_{\frac{n}{2}}) < \Pi|_{C \times I} (S_{\frac{n+1}{2}})
    = \Pi|_{(\Sigma_A \setminus C) \times I} (S_{\frac{n+1}{2}})
    < \Pi|_{(\Sigma_A \setminus C) \times I}(R_{\frac{n}{2}})< \\...
    ... < \Pi|_{(\Sigma_A \setminus C) \times I} (R_1) < \Pi|_{(\Sigma_A \setminus C) \times I}(S_1).
\end{multline*}
In any case, we obtain the order for the bi-strips as claimed in Theorem \ref{teo:TP}.

The following lemma, which proof is analogous to the Lemma \ref{lem:simatrator}, relates the maximal attractors of symmetric strips.

\begin{lemma}
Let $S$ and $S'$ be two attracting (repelling) strips in $\Sigma_A \times I$ such that $\Pi(S)= \Pi(S')$. If $B$ and $B'$ are their maximal attractors, then $\Pi(B) = \Pi(B')$.
\end{lemma}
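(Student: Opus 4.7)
The plan is to mimic the proof of Lemma \ref{lem:simatrator} (and of Lemma \ref{lem:relentreatratores}) by unpacking the definition of the maximal attractor and exploiting the semi-conjugation $\Pi$ from Lemma \ref{lem:conj2}. Since $S$ is attracting with respect to $G$ one has $G(S)\subset S$, so $\{G^n(S)\}_{n\geq 0}$ is a nested decreasing sequence of compact sets and $B=\bigcap_{n\geq 0}G^n(S)$; analogously for $B'$ and $S'$.

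First I would invoke compactness of $\Sigma_A\times I$ together with continuity of $\Pi$ to interchange $\Pi$ with the countable intersection:
\[
	\Pi(B)
	= \Pi\Bigl(\bigcap_{n\geq 0}G^n(S)\Bigr)
	= \bigcap_{n\geq 0}\Pi(G^n(S)),
\]
and likewise for $\Pi(B')$. This is exactly the step used in the proof of Lemma \ref{lem:relentreatratores}. Then, applying the semi-conjugation $\Pi\circ G=F\circ\Pi$ (Lemma \ref{lem:conj2}) inductively gives $\Pi(G^n(S))=F^n(\Pi(S))$ for every $n\geq 0$, and the same identity for $S'$. Substituting yields
\[
	\Pi(B)=\bigcap_{n\geq 0}F^n(\Pi(S))
	\quad\text{and}\quad
	\Pi(B')=\bigcap_{n\geq 0}F^n(\Pi(S')).
\]
Since by hypothesis $\Pi(S)=\Pi(S')$, the two right-hand sides coincide and hence $\Pi(B)=\Pi(B')$.

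For the repelling case one notes that by Lemma \ref{projdasfaixas}, $F^{-1}$ is well defined on the bi-strip $\Pi(S)=\Pi(S')$, and $\Pi$ also semi-conjugates $G^{-1}$ and $F^{-1}$ on the relevant sets; the maximal repeller of $S$ is $\bigcap_{n\geq 0}G^{-n}(S)$, so the identical argument (with $G^{-1}$ and $F^{-1}$ in place of $G$ and $F$) applies verbatim. No obstacle is expected: the only non-formal point is the interchange of $\Pi$ with the nested intersection, which is routine given compactness and continuity. In fact the statement could alternatively be obtained in one line by applying Lemma \ref{lem:relentreatratores} to both $S$ and $S'$ and using uniqueness of the maximal attractor of the common bi-strip $\Pi(S)=\Pi(S')$; I would mention this as an immediate consequence to streamline the exposition.
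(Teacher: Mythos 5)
Your proof is correct and matches the spirit the paper intends: the paper merely asserts the result is ``analogous to Lemma~\ref{lem:simatrator},'' and your argument---interchanging $\Pi$ with the nested intersection by compactness and continuity, then using the semi-conjugation of Lemma~\ref{lem:conj2} to pull $\Pi$ through the iterates of $G$---is precisely the computation that the proofs of Lemmas~\ref{lem:simatrator} and~\ref{lem:relentreatratores} carry out. Your closing observation is also correct and arguably cleaner: since Lemma~\ref{lem:relentreatratores} already identifies $\Pi(B)$ (respectively $\Pi(B')$) as \emph{the} maximal attractor of the bi-strip $\Pi(S)$ (respectively $\Pi(S')$), and $\Pi(S)=\Pi(S')$ by hypothesis, the equality $\Pi(B)=\Pi(B')$ follows at once from the uniqueness of the maximal attractor of a given bi-strip.
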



\subsubsection{Structure of the measures}

In this section we will relate ergodic invariant measures with respect to $F$ with invariant measures with respect to the extended step skew-product.

Given  $(\xi,z) \in \Sigma_N \times I$ and $(\omega,x) \in \Sigma_A \times I$,  for each $n \geq 1$ define
\[
    \nu_n(\xi,z) := \frac{1}{n} \sum_{i=0}^{n-1} \delta_{F^i(\xi,z)}
    \quad\text{ and }\quad
    \mu_n(\omega,x) := \frac{1}{n} \sum_{i=0}^{n-1} \delta_{G^i(\omega,x)}.
\]

The following is an immediate consequence of continuity.

\begin{lemma}\label{lem:levantamento}
If $\nu_n(\Pi(\omega,x)) \to \nu$ as $n\to\infty$ in the weak$\ast$ topology and $\mu$ is a weak$\ast$ limit point of the sequence $(\mu_n(\omega,x))$, then $\nu = \Pi_*\mu$.
\end{lemma}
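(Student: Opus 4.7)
The plan is to exploit the semi-conjugation $\Pi \circ G = F \circ \Pi$ from Lemma \ref{lem:conj2} together with continuity of $\Pi$, which makes push-forward continuous in the weak$\ast$ topology.

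First I would observe that iterating the semi-conjugation gives $\Pi \circ G^i = F^i \circ \Pi$ for every $i \geq 0$, and hence for the Dirac masses we have
\[
    \Pi_\ast \delta_{G^i(\omega,x)} = \delta_{F^i(\Pi(\omega,x))}.
\]
Since push-forward is linear, averaging over $i=0,\ldots,n-1$ immediately yields
\[
    \Pi_\ast \mu_n(\omega,x) = \nu_n(\Pi(\omega,x))
    \quad\text{for all }n\geq 1.
\]

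Next I would let $(n_k)$ be a subsequence along which $\mu_{n_k}(\omega,x) \to \mu$ in the weak$\ast$ topology (such a subsequence exists by hypothesis). Since $\Pi \colon \Sigma_A \times I \to \Sigma_N \times I$ is continuous between compact metric spaces, the induced push-forward $\Pi_\ast$ is continuous with respect to the weak$\ast$ topology on probability measures. Therefore
\[
    \Pi_\ast \mu_{n_k}(\omega,x) \longrightarrow \Pi_\ast \mu.
\]
On the other hand, by the identity above, $\Pi_\ast \mu_{n_k}(\omega,x) = \nu_{n_k}(\Pi(\omega,x))$, which converges to $\nu$ by hypothesis (as a subsequence of a convergent sequence has the same limit). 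Uniqueness of weak$\ast$ limits then gives $\nu = \Pi_\ast \mu$, concluding the proof.

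There is no real obstacle here; the statement is essentially a restatement of continuity of push-forward combined with the semi-conjugation. The only point requiring a moment of care is to verify that $\Pi_\ast$ is weak$\ast$ continuous, which follows from the fact that for every $\varphi \in C^0(\Sigma_N \times I)$ the function $\varphi \circ \Pi$ lies in $C^0(\Sigma_A \times I)$, so $\int \varphi \, d\Pi_\ast \mu_{n_k}(\omega,x) = \int \varphi \circ \Pi \, d\mu_{n_k}(\omega,x) \to \int \varphi \circ \Pi \, d\mu = \int \varphi \, d\Pi_\ast \mu$.
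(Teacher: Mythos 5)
Your proof is correct and follows exactly the approach the paper has in mind: the paper states the lemma "is an immediate consequence of continuity" and gives no further details, and your argument---combining $\Pi_\ast \mu_n(\omega,x)=\nu_n(\Pi(\omega,x))$ (from the semi-conjugation) with weak$\ast$ continuity of push-forward by a continuous map---is precisely the spelled-out version of that remark.
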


By Lemma \ref{lem:levantamento}, it is an immediate consequence that given any ergodic $F$-invariant measure $\nu$, there exists a $G$-invariant measure $\mu$ such that $\nu = \Pi_*\mu$. However,  $\mu$ does not need to be unique nor ergodic, as in the following example.

\begin{example}
Let $f_1,f_2\colon I \to \ine(I)$ be two maps such that $f_1$ preserves  and $f_2$ reverses orientation. Suppose that $f_1$ has a fixed point $p_1$ and let $\nu := \delta_{((1)^\mathbb{Z}, p_1)}$, which is   $F$-invariant ergodic. Note that $\Pi{-1}(((1)^\mathbb{Z}, p_1))=\{((1)^\mathbb{Z}, p_1),((3)^\mathbb{Z}, 1-p_1)\}$ and that both measures $\mu := \delta_{((1)^\mathbb{Z}, p_1)}$ and $\widehat{\mu} := \delta_{((3)^\mathbb{Z}, 1-p_1)}$ are $G$-invariant ergodic and satisfy $\Pi_*\mu = \nu$ and $\Pi_*\widehat{\mu} = \nu$. Furthermore, for each $t \in (0,1)$, the measure $\mu_t := t\mu + (1-t) \widehat{\mu}$ is also $G$-invariant satisfies $\Pi_*\mu_t = \nu$, however  is not ergodic.
\end{example}

By \cite[2.15 Theorem]{KV}, given an attracting strip $S$ (with respect to $G$), there exists an unique ergodic $G$-invariant measure $\mu = \mu_S$ projecting to the Markov measure $\lambda$, the symmetric extension of $\lambda_0$.

\begin{lemma}\label{unicidadeaf3}
Let $S_0 = \Pi(S) \subset \Sigma_N \times I$ be an attracting bi-strip with respect to $F$. Then the unique ergodic $F$-invariant measure in $S_0$ which projects to $\lambda_0$ is the measure $\Pi_*\mu$, where $\mu = \mu_S$.
\end{lemma}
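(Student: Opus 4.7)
The plan is to prove existence and uniqueness by lifting through the extended skew-product $G$ and the symmetric Markov extension $\lambda$ of $\lambda_0$ (Lemma~\ref{lem:unisymext}). For existence, Proposition~\ref{prop:GsatisKV} lets us apply \cite[2.15 Theorem]{KV} to $G$: it produces the unique $G$-invariant ergodic measure $\mu_S$ in $S$ projecting to $\lambda$, and Lemmas~\ref{lem:conj2} and~\ref{lem:af3teo} then make $\Pi_*\mu_S$ an $F$-invariant and ergodic probability whose support lies in $\Pi(S)=S_0$ and which projects under $\Pi_1$ to $\lambda_0$.

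For uniqueness, let $\nu$ be any ergodic $F$-invariant probability supported in $S_0$ with $(\Pi_1)_*\nu=\lambda_0$. The idea is to construct a $G$-invariant lift $\hat\mu$ on $\Sigma_A\times I$ whose base projection is $\lambda$ and then to apply the uniqueness from \cite{KV} to the ergodic components of $\hat\mu$. Disintegrating $\nu=\int\nu_\xi\,d\lambda_0(\xi)$, define
\[
    \hat\mu_\omega:=\begin{cases}\nu_{\pi(\omega)}&\text{if }\omega\in C,\\ R_*\nu_{\pi(\omega)}&\text{if }\omega\in\Sigma_A\setminus C,\end{cases}
\]
and let $\hat\mu$ be the probability on $\Sigma_A\times I$ with these fibers and base measure $\lambda$ ($C$ as in~\eqref{eq:defdeC}). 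Then $(\Pi_1^A)_*\hat\mu=\lambda$ is automatic, the $1/2$--$1/2$ symmetry of $\lambda$ on the two branches of $\pi^{-1}(\xi)$ together with $R\circ R=\mathrm{id}$ yields $\Pi_*\hat\mu=\nu$, and the $G$-invariance of $\hat\mu$ reduces, via the fiber form $(f_{\overline{\omega_0}})_*\nu_{\pi(\omega)}=\nu_{\sigma\pi(\omega)}$ of the $F$-invariance of $\nu$, to the pointwise identity $g_{\omega_0}\circ R^{\epsilon(\omega)}=R^{\epsilon(\sigma_A\omega)}\circ f_{\overline{\omega_0}}$ (with $\epsilon(\omega)\in\{0,1\}$ indicating $\omega\notin C$), which is the four-case check behind Lemma~\ref{lem:conj2} and follows from~\eqref{eq:deffibermaps} and the admissibility rules of $A$.

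Decompose $\hat\mu=\int\hat\mu_\alpha\,d\rho(\alpha)$ into ergodic components. Since $\nu$ is ergodic and $\Pi_*\hat\mu=\nu$, extremality forces $\Pi_*\hat\mu_\alpha=\nu$ for $\rho$-almost every $\alpha$; and because $\lambda$ is a nondegenerate Markov measure on the transitive shift $\Sigma_A$ it is $\sigma_A$-ergodic, so $(\Pi_1^A)_*\hat\mu_\alpha=\lambda$ for $\rho$-almost every $\alpha$ as well. Fixing such an $\alpha$, the support of $\hat\mu_\alpha$ lies in $\Pi^{-1}(S_0)=S\cup S'$, where $S'$ is the companion strip attached to the mirrored stationary measure of $\mu_S$ (cf.~Lemma~\ref{ergdemu'} and~\eqref{eq:faixasimetrica}); since $S$ and $S'$ are both forward $G$-invariant with null symmetric differences from their $G^{-1}$-images under any invariant probability supported in $S\cup S'$, ergodicity concentrates $\hat\mu_\alpha$ entirely on $S$ or entirely on $S'$. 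The uniqueness part of \cite[2.15 Theorem]{KV} then identifies $\hat\mu_\alpha$ with $\mu_S$ or $\mu_{S'}$, and the mirroring involution $\tau(\omega,x):=(s(\omega),R(x))$, which commutes with $G$ and satisfies $\Pi\circ\tau=\Pi$, yields $\tau_*\mu_S=\mu_{S'}$ and hence $\Pi_*\mu_{S'}=\Pi_*\mu_S$. Either way, $\nu=\Pi_*\hat\mu_\alpha=\Pi_*\mu_S$.

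The main obstacle is the $G$-invariance of the symmetric lift $\hat\mu$: the fiber maps $g_i$ were engineered to make the $R$-twisting from crossing $C\leftrightarrow\Sigma_A\setminus C$ match $G$, but combining this with the $F$-equivariance of $\{\nu_\xi\}$ through the four admissible cases is the delicate bookkeeping driving the entire argument.
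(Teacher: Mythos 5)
Your proof is correct but takes a genuinely different route from the paper's. The paper splits into the cases $S=S'$ and $S\cap S'=\emptyset$; in the latter it lifts $\nu$ through the fiberwise bijection $\Pi|_S\colon S\to S_0$ (and likewise $\Pi|_{S'}$), shows the resulting measures $\widehat\mu,\widehat\mu'$ are mirrored, and then has to argue that their base projections $\widehat\lambda,\widehat\lambda'$ are symmetric Markov measures projecting to $\lambda_0$, hence both equal the symmetric extension $\lambda$ before invoking uniqueness from~\cite{KV}; in the former it appeals directly to the bony-graph property of $B_0$. You instead build a single $G$-invariant lift $\hat\mu$ \emph{a priori} with base $\lambda$ by twisting the disintegration $\{\nu_\xi\}$ with $R^{\epsilon(\omega)}$, so $(\Pi^A_1)_*\hat\mu=\lambda$ is built in and $\Pi_*\hat\mu=\nu$ is an immediate change of variables (the reduction of $G$-invariance to $g_{\omega_0}\circ R^{\epsilon(\omega)}=R^{\epsilon(\sigma_A\omega)}\circ f_{\overline{\omega_0}}$ does check out across the four admissibility cases). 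You then use ergodic decomposition of $\hat\mu$, extremality of $\nu$ and of $\lambda$ to pin the base and image projections of a.e.\ ergodic component, and forward invariance of $S$ and $S'$ to localize each ergodic component in $S$ or $S'$, so that~\cite{KV} uniqueness applies on that strip, with the $\Pi$-equivariant involution $\tau$ giving $\Pi_*\mu_{S'}=\Pi_*\mu_S$. What this buys is a case-free argument and, more notably, it sidesteps the need to verify that the base projection of the lifted measure is Markov and symmetric — a step the paper asserts but leaves largely implicit — since your lift has base measure $\lambda$ by construction. The trade-off is the extra layer of disintegration and ergodic decomposition, but the bookkeeping is sound.
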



\begin{proof}
First, note that if $B_0$ is the maximal attractor of $S_0$ then
\begin{equation}\label{eq:suppnografico}
    \supp(\nu) \subset B_0 \quad \text{for all ergodic $F$-invariant measure $\nu$ in $S_0$}.
\end{equation}
Combining \eqref{eq:ordem}, \eqref{eq:faixasimetrica} and Lemma \ref{lem:atmostone}, we have that $S_0 = \Pi(S) = \Pi(S')$, where $S'$ is the mirrored strip of $S$ (the strip obtained from $\mu_{S'}$), and either $S = S'$ (which happens for at most one strip $S_0$) or $S \cap S' = \emptyset$. We analyze these cases separately.

\smallskip
\noindent\textbf{Case $S=S'$.} In this case $S_0$ is a (simple) strip and there exists a subset $D \subset \Sigma_N$ such that $\lambda_0(D) = 1$ and the projection $\Pi_1\colon B_0 \cap \Pi_1^{-1}(D) \to D$ is a bijection. This fact and \eqref{eq:suppnografico} imply that in this case $\Pi_*\mu$ is the unique measure in $S_0$ which projects to $\lambda_0$.

\smallskip
\noindent\textbf{Case $S\cap S' =\emptyset$.} In this case the bi-strip $S_0$ is not a (simple) strip and we can not apply the argument of the previous case. However, note that in this case, $\Pi|_S\colon S \to S_0$ and $\Pi|_{S'}\colon S' \to S_0$ are bijections such that $\Pi|_S\circ G=F\circ \Pi|_S$ and $\Pi|_{S'}\circ G=F\circ \Pi|_{S'}$. Suppose that, besides $\Pi_\ast\mu$, $\nu$ there is another measure in $S_0$ satisfying the conditions of the lemma. As $\Pi|_S\colon S \to S_0$ and $\Pi|_{S'}\colon S' \to S_0$ are bijections, we can consider the measures
\[
    \widehat{\mu} := ((\Pi|_S)^{-1})_*\nu \quad\text{and} \quad \widehat{\mu}' := ((\Pi|_{S'})^{-1})_*\nu.
\]
As $\nu$ is $F$-invariant and ergodic, we have that $\widehat{\mu}$ and $\widehat{\mu}'$ are $G$-invariant and ergodic.
\setcounter{claim}{0}
\begin{claim}
The measures $\widehat{\mu}$ and $\widehat{\mu}'$ are mirrored.
\end{claim}

\begin{proof}
Fix $i \in \ak 1,...,N \fk$. As the strips $S$ and $S'$ are mirrored, we have (recall \eqref{eq:cilindro})
\[
\begin{split}
    \widehat{\mu}([0;i] \times I)
    &= \widehat{\mu}(([0;i] \times I) \cap S)\\
   (\text{by definition of }\widehat\mu)\quad
    &= \nu(\Pi|_S(([0;i] \times I) \cap S)) \\
    (\text{mirror of the strips})\quad
    &= \nu(\Pi|_{S'}(([0;N+i] \times I) \cap S')) \\
   (\text{by definition of }\widehat\mu')\quad
    &= \widehat{\mu}'(([0;N+i] \times I) \cap S') \\
    &= \widehat{\mu}'([0;N+i] \times I)
\end{split}
\]
The argument for cylinders of arbitrary length is  analogous.  This proves the claim.
\end{proof}

Let $\widehat\lambda$ and $\widehat\lambda'$ be the projections of $\widehat{\mu}$ and $\widehat{\mu}'$, respectively, on $\Sigma_A$. Then $\widehat\lambda$ and $\widehat\lambda'$ are symmetric measures. In fact, if $B$ is the maximal attractor of $S$ then, by \eqref{eq:suppnografico}, $\supp(\widehat{\mu}) \subset B$. As $\Pi_1^A|_B\colon B \to \Pi_1^A(B)$ is a bijection (modulus a set of zero $\widehat\mu$-measure), it follows, from Claim 1, that $\widehat\lambda$ is symmetric. The proof of the symmetry of $\widehat\lambda'$ is analogous.
Let us see that
\[
    \pi_*\widehat{\lambda} = \lambda_0.
\]
We will show again this equality for cylinders with length $1$ and the general argument is analogous. Fix $i \in \ak 1,...,N\fk$.
Then
\[
\begin{split}
    \widehat{\lambda}(\pi^{-1}([0;i]))
    &= \widehat{\lambda}([0;i] \cup [0;N+i])
    = \widehat{\lambda}([0;i]) + \widehat{\lambda}([0;N+i])\\
    &= \widehat{\mu}([0;i] \times I) + \widehat{\mu}([0;N+i] \times I)
    =   \widehat{\mu}([0;i] \times I\cup[0;N+i] \times I)\\
    &= \widehat{\mu}(\Pi^{-1}([0;i] \times I))\\
    &= \nu([0;i] \times I)\\
    &= \lambda_0([0;i]).
\end{split}
\]
Analogously, we can show that $\widehat{\lambda}'$ projects to $\lambda_0$ and that $\widehat\lambda$ and $\widehat\lambda'$ are Markov measures in $\Sigma_A$. As both measures project to $\lambda_0$ and are symmetric, it follows that $\widehat\lambda=\widehat\lambda'$ and therefore $\widehat\lambda=\lambda$ (symmetric extension of $\lambda_0$ to $\Sigma_A$).

By  \cite[2.15 Theorem]{KV},  $\mu$  is the unique ergodic $G$-invariant measure projecting to $\lambda$. Hence we have $\widehat{\mu} = \mu$. Thus, $\nu = (\Pi|_S)_*\mu$.
\end{proof}


\subsection{Conclusion of the proof of Theorem \ref{teo:TP}}\label{subsec:demTP}

We are now prepared to conclude the proof of Theorem \ref{teo:TP}.
As \cite{KV} already implies the result for the space $\mathcal{P}$, it suffices to consider the space $\mathcal{R}$.

By Proposition~\ref{prop:abeden}, there exists an open and dense subset $\widehat{\mathcal R}\subset\mathcal R$ of step skew-products satisfying all the conditions i), ii) and iii) stated in Section \ref{subsec:cg}. Fix $F\in\widehat{\mathcal R}$. By Proposition \ref{prop:GsatisKV},we can apply \cite[2.15 Theorem]{KV} to  the extended step skew-product $G$. Hence, there exists a finite collection of attracting and repelling strips with respect to $G$ such that their union is the whole phase space $\Sigma_A \times I$. By Lemma \ref{projdasfaixas}, each attracting (repelling) strip with respect to $G$ is sent by $\Pi$ in an attracting (repelling) bi-strip with respect to $F$. As $\Pi$ is surjective, item 1 of the Theorem \ref{teo:TP} is proved.

Consider $S_0 = \Pi(S)$ an attracting bi-strip with respect to $F$, where $S$ is an attracting strip with respect to $G$, and denote by $B$ the maximal attractor of $S$. By \cite[$2.15$ Theorem]{KV}, $B$ is a continuous bony graph with respect to the Markov measure $\lambda$, the symmetric extension of $\lambda_0$. Then, by Lemmas \ref{lem:relentreatratores} and \ref{lem:duplobg},  the set $\Pi(B)$ is the maximal attractor of $S$ and is a continuous bi-bony graph.
Analogously, the maximal repeller of each repelling bi-strip is a continuous bi-bony graph.
This proves the item 2 of  Theorem \ref{teo:TP}.

In order to prove the item 3 of  Theorem \ref{teo:TP}, consider again $S_0 = \Pi(S)$ an attracting bi-strip with respect to $F$, where $S$ is an attracting strip with respect to $G$, and denote by $B_0$ its maximal attractor. By \cite[2.15 Theorem]{KV}, there exists an unique invariant and ergodic measure (with respect to $G$) $\mu$ in $S$ such that $(\Pi_1^A)_* \mu = \lambda$, the symmetric extension of $\lambda_0$. Moreover, $\mu$ is physical and hyperbolic and its basin contains a full measure set in $S$. By Lemma \ref{lem:af3teo}, the measure $\Pi_* \mu$ is $F$-invariant and ergodic and projects to $\lambda_0$. Furthermore, by Lemma \ref{unicidadeaf3}, it is the unique measure with this property. By Lemma \ref{lem:medfisica} and Corollary \ref{lem:medhiperbolica}, $\Pi_\ast\mu$ is physical, hyperbolic and its basin contains a full measure set in the bi-strip $S_0$. This concludes the proof of the item 3 of Theorem \ref{teo:TP} in the case of attracting bi-strips. The case of repelling bi-strips is analogous.

Finally, to prove item 4, consider one more time $S_0 = \Pi(S)$ an attracting bi-strip with respect to $F$ and let $\nu$ be the measure obtained in item 3. Suppose that $S_0$ is a (simple) strip. By item 2 proved above and by \eqref{eq:suppnografico}, there exists a subset $\Upsilon \subset B_0$ such that $\nu(\Upsilon) = 1$ and the projection $\Pi_1|_\Upsilon\colon\Upsilon \to \Pi_1(\Upsilon)$ is a bijection. As it is immediate that
\[
    \Pi_1 \circ F = \sigma \circ \Pi_1,
\]
we have that $\Pi_1|_\Upsilon$ is a conjugation between $\Upsilon$ and $\Pi_1(\Upsilon)$. The proof in the case where $S_0$ is a bi-strip is analogous, but $\Pi_1|_\Upsilon$ becomes a surjective map two-to-one instead of a bijection. This proves item 4 in the case of attracting strips.
The case of repelling strips is analogous.

Therefore, Theorem \ref{teo:TP} is proved.
\qed

\bibliographystyle{plain}

\end{document}